\documentclass[12pt]{amsart}%

\usepackage{graphicx}
\usepackage{tikz-cd}
\usepackage{mathtools}
\usepackage{wasysym}

\usepackage{relsize}
\usepackage{enumerate}
\usepackage{multicol,multirow}
\usepackage{amsmath,amssymb,amsfonts}
\usepackage{latexsym}
\usepackage{mathrsfs}
\usepackage{amsthm}
\usepackage{rotating}
\usepackage{appendix}
\usepackage[numbers]{natbib}
\usepackage{ifpdf}
\usepackage[T1]{fontenc}
\usepackage{textcomp}
\usepackage[a4paper,body={16.3cm,22.8cm},centering]{geometry}
\usepackage[colorlinks]{hyperref}
\usepackage{tikz}
\usepackage{orcidlink}
\usepackage{stmaryrd}
\usepackage{xypic}

\begin{document}











\usetikzlibrary{calc,shapes.geometric}
\tikzset{
baseon/.style={baseline={($(#1)+(0,-0.58ex)$)}},
baseon/.default=current bounding box.center,
every picture/.style=baseon,
lst/.style={},
dst/.style={circle,inner sep=1pt,outer sep=0pt,fill,draw,dst2},
dst2/.style={fill=white},
ddst/.style={diamond,draw,inner sep=1pt},
eest/.style={ellipse,draw,inner sep=1pt,minimum size=2ex},
}


\font \eightrm=cmr8
\font \sevenrm=cmr7
\font \fiverm=cmr5
\newcommand{\nc}{\newcommand}
\nc\smsc{0.8}
\def\tthreeone{\XX[scale=\smsc]{\xxr{-5}5}}
\def\tthreetwo{\XX[scale=\smsc]{\xxl55}}
\def\tfourone{\XX[scale=\smsc]{\xxr{-4}4\xxr{-7.5}{7.5}}}
\def\tfourtwo{\XX[scale=\smsc]{\xxr{-5}5\xxl{-2}8}}
\def\tfourthree{\XX[scale=\smsc]{\xxl{4}4\xxl{7.5}{7.5}}}
\def\tfourfour{\XX[scale=\smsc]{\xxl{5}5\xxr{2}8}}
\def\tfourfive{\XX[scale=\smsc]{\xxr{-6}6\xxl66}}
\def\oprec{\!\!\joinrel{\ocircle\hskip -12.5pt \prec}\,}
\def\soprec{\,\joinrel{\ocircle\hskip -6.7pt \prec}\,}
\def \endsquare{$\sqcup \!\!\!\! \sqcap$ \\}
\def\diagramme #1{\vskip 4mm \centerline {#1} \vskip 4mm}
\def\att#1{\textcolor{red}{!!!}\textcolor{blue}{#1}}
\nc{\ooverline}[1]{\overline{\overline #1}}
\nc{\uunderline}[1]{\underline{\underline #1}}
\def \restr#1{\mathstrut_{ | \, \scriptstyle{ #1}}}
\def \srestr#1{\mathstrut_{\scriptstyle |}\hbox to
-1.5pt{}\raise-4pt\hbox{$\hskip 1pt\scriptscriptstyle #1$}}
\nc{\mop}[1]{\mathop{\hbox {\rm #1} }\nolimits}
\nc{\gmop}[1]{\mathop{\hbox {\bf #1} }\nolimits}
\def\starz{{\displaystyle\mathop{\star}\limits}_z}
\def\astT{{\displaystyle\mathop{\ast}\limits}_T}
\nc{\smop}[1]{\mathop{\hbox {\sevenrm #1} }\nolimits}
\nc{\ssmop}[1]{\mathop{\hbox {\fiverm #1} }\nolimits}
\nc{\mopl}[1]{\mathop{\hbox {\rm #1} }\limits}
\def\dbar{d\hskip-3pt \raise 4pt\hbox{-}}
\nc{\smopl}[1]{\mathop{\hbox {\sevenrm #1} }\limits}
\nc{\ssmopl}[1]{\mathop{\hbox {\fiverm #1} }\limits}
\renewcommand\baselinestretch{1}

\newcommand{\delete}[1]{}

\nc{\mlabel}[1]{\label{#1}}  
\nc{\mcite}[1]{\cite{#1}}  
\nc{\mref}[1]{\ref{#1}}  
\nc{\mbibitem}[1]{\bibitem{#1}} 

\delete{
\nc{\mlabel}[1]{\label{#1}  
{\hfill \hspace{1cm}{\small\tt{{\ }\hfill(#1)}}}}
\nc{\mcite}[1]{\cite{#1}{\small{\tt{{\ }(#1)}}}}  
\nc{\mref}[1]{\ref{#1}{{\tt{{\ }(#1)}}}}  
\nc{\mbibitem}[1]{\bibitem[\bf #1]{#1}} 
}

\newtheorem{theorem}{Theorem}[section]

\newtheorem{notation}[theorem]{Notation}
\newtheorem{corollary}[theorem]{Corollary}
\newtheorem{proposition}[theorem]{Proposition}
\newtheorem{lemma}[theorem]{Lemma}

{\theoremstyle{definition}
\newtheorem{remark}[theorem]{Remark}
\newtheorem{example}[theorem]{Example}
\newtheorem{conjecture}[theorem]{Conjecture}
\numberwithin{equation}{section}
\newtheorem{fact}[theorem]{Fact}

\newtheorem{problem}[theorem]{Problem}
\newtheorem{definition}[theorem]{Definition}
\newtheorem{definitions}[theorem]{Definitions}
}

\renewcommand{\labelenumi}{{\rm \alph{enumi}}}
\renewcommand{\theenumi}{\alph{enumi}}

\renewcommand{\labelenumii}{{\rm \roman{enumii}}}
\renewcommand{\theenumii}{\roman{enumii}}
\newcommand\alphlist{a,b,c,d,e,f,g,h,i,j,k,l,m,n,o,p,q,r,s,t,u,v,w,x,y,z}
\newcommand\Alphlist{A,B,C,D,E,F,G,H,I,J,K,L,M,N,O,P,Q,R,S,T,U,V,W,X,Y,Z}
\newcommand\getcmds[3]{\expandafter\newcommand\csname #2#1\endcsname{#3{#1}}}
\makeatletter
\@for\x:=\alphlist\do{\expandafter\getcmds\expandafter{\x}{frak}{\mathfrak}}
\@for\x:=\Alphlist\do{\expandafter\getcmds\expandafter{\x}{frak}{\mathfrak}}
\makeatother

\nc{\bfk}{{\bf k}}
%
\nc{\sha}{\shuffle}

\nc{\id}{\mathrm{id}}
\nc{\Id}{\mathrm{Id}}
\nc{\lbar}[1]{\overline{#1}}
\nc{\ot}{\otimes}
\nc{\dep}{\mathrm{dep}}
\nc{\ver}{\mathrm{ver}}

\nc{\tred}[1]{\textcolor{red}{#1}} \nc{\tgreen}[1]{\textcolor{green}{#1}}
\nc{\tblue}[1]{\textcolor{blue}{#1}} \nc{\tpurple}[1]{\textcolor{purple}{#1}}
\nc{\tcyan}[1]{\textcolor{cyan}{#1}} 
\nc{\tblk}[1]{\textcolor{black}{#1}}

\nc{\li}[1]{\tpurple{\underline{Li:}#1 }}
\nc{\liadd}[1]{\tpurple{#1}}
\nc{\xing}[1]{\tblue{\underline{Xing:}#1 }}
\nc{\yuan}[1]{\tred{\underline{Yuan:}#1 }}
\nc{\markus}[1]{\tred{\underline{Markus:} #1}}
\nc{\dominique}[1]{\tpurple{\underline{Dominique: }#1 }}
\long\def\ignore#1{}

\newcommand\stset[2]{\tikzset{#1/.style={#2}}}
\newcommand\stadd[2]{\tikzset{#1/.append style={#2}}}
\newcommand\dadd[1]{\stset{dst2}{#1}}
\newcommand\treeo[2][]{\tikz[x=0.7cm,y=0.7cm,line width=0.15ex,
every node/.style={font=\scriptsize,inner sep=1pt,label distance=1pt},#1]{%
\coordinate (o) at (0,0);#2}}%
\newcommand\treeoo[2][]{\treeo[#1]{\path (o) node[dst,name=o]{};#2}}%
\newcommand\cddf[3]{%
\coordinate (#2) at ($(#1)+(#3)$);
\draw[lst] (#1)--(#2);
\node[dst] at (#1) {};}
\newcommand\cdx[4][1]{\cddf{#2}{#3}{#4:#1*0.5cm}}
\newcommand\cdl[2][1]{\cdx[#1]{#2}{#2l}{135}}
\newcommand\cdr[2][1]{\cdx[#1]{#2}{#2r}{45}}
\newcommand\cdlr[2][1]{%
\foreach \i in {#2} {\cdl[#1]{\i}\cdr[#1]{\i}}}
\newcommand\cdlrx[3][1]{%
\cdx[#1]{#2}{#2l}{180-#3}\cdx[#1]{#2}{#2r}{#3}}
\newcommand\cda[2][1]{\cdx[#1]{#2}{#2a}{90}}
\newcommand\cdb[2][1]{\cdx[#1]{#2}{#2b}{-90}}
\def\zzz#1`#2...#3`#4...#5`#6@{%
--++(#1)
node[dst,label={#5:$#6$},name=#2]{}
node[midway,auto,#3]{$#4$}
}
\def\ddd#1`#2`#3@{+(#1)node[ddst,name=#2]{$#3$}}
\def\eee#1`#2`#3@{+(#1)node[eest,name=#2]{$#3$}}
\def\xxx#1`#2@{node[midway,auto,inner sep=1pt,#1]{$#2$}}
\def\pp#1`#2`#3@{node[dst,label={#2:$#3$},pos=#1]{}}
\def\oo#1`#2`#3@{\path (o) node[dst,label={#2:$#3$},name=o,#1]{};}
\def\eoo#1`#2@{\node[eest,name=o,#1] at (o) {$#2$};}

\newif\ifshowjdq
\showjdqtrue
\newcommand\setXXclip[3]{%
\def\XXheight{#1}\def\XXdepth{#2}\def\XXwidth{#3}}
\setXXclip{1}{-0.5}{1.1}
\newcommand\scopeclip[1]{\begin{scope}
\clip(-\XXwidth,\XXdepth)rectangle(\XXwidth,\XXheight);#1
\end{scope}}
\newcommand\XX[2][]{%
\tikz[x=0.5cm,y=0.5cm,baseline,inner sep=1.5pt,line width=0.15ex,
every node/.style={font=\scriptsize},#1]{
\scopeclip{\draw (2,2)--(0,0)--(-2,2) (0,-2)--(0,0);
\ifshowjdq\node[dst]at(0,0){};\fi}#2}}
\newcommand\xx[3]{%
\scopeclip{\draw(#1/10,#2/10)--+(#3*45:3);
\ifshowjdq\node[dst]at(#1/10,#2/10){};\fi}}
\newcommand\xxl[2]{\xx{#1}{#2}3}
\newcommand\xxr[2]{\xx{#1}{#2}1}
\newcommand\xxlr[2]{\xxl{#1}{#2}\xxr{#1}{#2}}
\newcommand\xxh[6]{
\draw(#1/10,#2/10)+(0.5*#3*45+0.5*#4*45:#6) node[above] {$#5$};}
\newcommand\xxhu[4][0.15]{\xxh{#2}{#3}13{#4}{#1}}
\newcommand\nodea[3][]{\node[above=1pt,#1] at (#2) {$#3$};}
\newcommand\nax[2][]{\foreach \i in {#2} {\nodea[#1]{\i,1}{x}}}
\newcommand\naxx[3][]{\foreach \i in {#2} {\nodea[#1]{\i,1}{#3}}}
\newcommand\naxxx[2][]{\foreach \i/\j in {#2} {\nodea[#1]{\i,1}{\j}}}

\makeatletter
\newcommand\simra{\mathrel{\mathpalette\@verra\sim}}
\def\@verra#1#2{\lower.5\p@\vbox{\lineskiplimit\maxdimen \lineskip-.5\p@
\ialign{$\m@th#1\hfil##\hfil$\crcr#2\crcr\rightarrow\crcr}}}
\makeatother

\nc{\dnx}{\Delta_n A} \nc{\dx}{\Delta A} \nc{\dgp}{{\rm deg_{P}}}
\nc{\dgt}{{\rm deg_{T}}} \nc{\dg}{{\rm deg}} \nc{\ida}{ID($A$)} \nc{\tu}{\tilde{u}} \nc{\tv}{\tilde{v}}
\nc{\nr}{\calr_n} \nc{\nz}{\calz_n} \nc{\fun}{\cala_{n,d}}
 \nc{\fbase}{\calb} \nc{\LF}{\mathrm{RF}} \nc{\FFA}{\mathrm{LF}} \nc{\irr}{\mathrm{Irr}}
 \nc{\result}{\bfk\mathrm{Irr}(S_n)}  \nc{\I}{I_{\mathrm{ID},n}^0}
 \nc{\nrs}{\calr_n^\star} \nc{\ii}{\mathrm{I}} \nc{\iii}{\mathrm{II}}
\nc{\intl}{{\rm int}}\nc{\ws}[1]{{#1}}\nc{\deleted}[1]{\delete{#1}}\nc{\plas}{placements\xspace}

\nc{\bim}[1]{#1}  \nc{\shaop}{\sha_{\Omega}^{+}}  \nc{\shao}{\sha_{\Omega}}
\nc{\bbim}[2]{#1 #2} \nc{\bbbim}[2]{#1,\, #2} \nc{\RBF}{{\rm RBF}}
\nc{\frb}{F_{\RB}} \nc{\shaf}{\ssha_{\tiny{\Omega}}} \nc{\sham}{\diamond_{\tiny{\Omega}}}
\nc{\lf}{\lfloor} \nc{\rf}{\rfloor} \nc{\shan}{\ssha_{\lambda}}
\nc{\rlex}{{\rm {lex}}} \nc{\bb}{\Box} \nc{\ra}{\rightarrow}
\nc{\DDF}{\mathrm{DD}(X,\,\Omega)}\nc{\DTF}{\mathrm{DT}(X,\,\Omega)} \nc{\DT}{\mathrm{DT}'(\Omega,\,V)}
\nc{\bra}{\mathrm{bra}} \nc{\bre}{\mathrm{bre}}
\nc{\dec}{\mathrm{dec}} \nc{\diamondw}{\diamond_{w}}
\nc{\type}{\mathrm{type}}


\nc\caF[1]{\cal{F}_{#1}(X,\,\Omega)}
\nc\calt{\cal{T}(X,\,\Omega)} \nc\caltn{\cal{T}_n(X,\,\Omega)}
\nc\calta{\cal{T}_0(X,\,\Omega)}
\nc\caltb{\cal{T}_1(X,\,\Omega)}
\nc\caltc{\cal{T}_2(X,\,\Omega)}
\nc\caltd{\cal{T}_3(X,\,\Omega)}
\nc\caltm{\cal{T}_m(X,\,\Omega)}
\nc\caltx{\cal{T}(X)}
\nc\calf{\cal{F}(X,\,\Omega)}
\nc\fram{\frak{M}(\Omega,\, X)}
\nc\shaw{\sha^{NC}_w(\Omega,\, X)}
\nc\dw{\diamond_w} \nc\dl{\diamond_\ell}
\nc\shal{\sha^{NC}_\ell(X,\, \Omega)} \nc\shav{\sha^{NC}_w(\Omega,\, V)} \nc\shat{\sha^{NC,1}_w(\Omega,\, T^{+}(V))}
\nc{\cfo}{\cal{F}(X,\,\Omega)}
\nc{\sh}{\rm{Sh}}
\nc{\lar}{\varinjlim}
\nc\XO{(X,\,\Omega)}
\def\cxo#1#2;{\cal{#1}#2\XO}
\nc\lrf[2]{B_{#2}^+(#1)}
\nc{\fd}{\mathrm{\text{typed angularly decorated planar rooted trees}}}
\nc{\rb}{\mathrm{RBFWs}} \nc{\dfw}{\mathrm{DFW{(X)}}} \nc{\tfw}{\mathrm{TFW{(X)}}}
\nc{\tfv}{\mathrm{TFW{(V)}}}

\def\Ve#1,#2,#3;{\vee_{#1,\,(#2,\,#3)}}
\def\bigv#1;#2;#3;{\bigvee\nolimits_{#1}^{#2;\,#3}}
\nc\rjt[2]{\mathrel{\mathop{\longrightarrow}\limits^{#1\hfill}_{\hfill#2}}}
\nc{\pl}{\cal{PLF}}
\nc{\tr}{\cal{RTF}}
\nc{\im}{\mathrm{im} }
\nc{\ff}{\cal{F}_\Omega}
\nc{\tm}{T_\Omega}
\nc{\calp}{\cal{P}}
\makeatletter
\nc\dd{\@ifnextchar'{\ddA}{\ddB}}
\def\ddA'#1;{\rhd'_{#1\,}}
\def\ddB#1;{\rhd_{#1\,}}
\nc{\pbt}{\mathrm{PBT}}
\nc{\ad}{\mathrm{ad}}

\nc{\hRR}{\hat {\mathbb R}}
\nc{\RR}{{\mathbb R}}
\nc{\NN}{{\mathbb N}}
\nc{\ol}[1]{\overline{#1}}
\nc{\set}{{\rm \bf set}}
\nc{\setx}{{\rm \bf set^{\times}}}
\nc{\setn}{{\rm \bf set_{{\mathbb N}}}}
\nc{\Sb}{{\mathbb S}}
\nc{\cp}{\Vdash}
\nc{\te}{\otimes}
\nc{\sus}{\subseteq}
\nc{\BF}{{\rm BF}\,}
\nc{\Pio}{\Pi^\circ}
\nc{\SM}{{\rm SM}\,}
\nc{\SMat}{{\rm SMat}\,}
\nc{\MMat}{{\rm MMat}\,}
\nc{\ti}{\times}
\nc{\ben}{{\bf 1}}
\nc{\bigsum}{{\mathlarger{\sum}}}
\nc{\nil}{{\mathbf 0}}
\nc{\Hom}{\text{Hom}}
\nc{\cT}{{\mathcal T}}
\newcommand{\btl}{\blacktriangleleft}
\newcommand{\wtl}{\lhd}
\newcommand{\pil}{\rightarrow}
\newcommand{\hN}{\hat{\mathbb N}}
\newcommand{\note}{\noindent {\bf Note.}}
\newcommand{\ppr}{{\prime \prime}}
\newcommand{\barA}{\overline{A}}
\newcommand{\barB}{\overline{B}}
\newcommand{\App}{A^{\prime \prime}}
\newcommand{\cC}{{\mathcal C}}
\newcommand{\cB}{{\mathcal B}}
\newcommand{\PPI}{{PP(I)}}
\newcommand{\DB}{{\Delta_{\cB}}}

\newcommand{\mto}[1]{\stackrel{#1}\longrightarrow}
\newcommand{\ZZ}{\mathbb{Z}}
\newcommand{\QQ}{\mathbb{Q}}
\newcommand{\kk}{{\Bbbk}}
\newcommand{\bH}{{\mathbf H}}


\definecolor{tol1}{HTML}{332288}
\definecolor{tol2}{HTML}{AA4499}
\definecolor{tol3}{HTML}{DDCC77}
\definecolor{tol4}{HTML}{44AA99}
\definecolor{tol5}{HTML}{88CCEE}
\definecolor{tol6}{HTML}{CC6677}
\definecolor{tol7}{HTML}{117733}
\definecolor{tol8}{HTML}{882255}


\newcommand{\Pre}{{\mathrm {Pre}}}
\newcommand{\Top}{{\mathrm {Top}}}
\newcommand{\cU}{{\mathcal U}}
\renewcommand{\cT}{{\mathcal T}}
\newcommand{\cS}{{\mathcal S}}
\newcommand{\low}{{\mathrm {low}}}
\newcommand{\bfx}{{\mathbf x}}
\newcommand{\bfu}{{\mathbf u}}
\newcommand{\hele}{{\mathbb Z}}


\newcommand{\rleftarrows}{\mathrel{\raise.75ex\hbox{\oalign{%
  $\hfil\scriptstyle\relbar$\cr
  \vrule width0pt height.5ex$\scriptstyle\smash\leftarrow$\cr}}}}
\newcommand{\rightlarrows}{\mathrel{\raise.75ex\hbox{\oalign{%
  $\scriptstyle\rightarrow$\hfil\cr
  $\scriptstyle\vrule width0pt height.5ex\relbar$\cr}}}}
\newcommand{\Rrelbar}{\mathrel{\raise.75ex\hbox{\oalign{%
  $\scriptstyle\relbar$\cr
  \vrule width0pt height.5ex$\scriptstyle\relbar$}}}}
\newcommand{\longrightleftarrows}{\rleftarrows\joinrel\Rrelbar\joinrel\rightlarrows}
\newcommand{\bihom}[2]{\, \overset{#1}{\underset{#2}{\longrightleftarrows}}\, }
\newcommand{\pow}{{\mathcal P}}
\newcommand{\ppow}{{\mathcal {PP}}}
\newcommand{\egp}{{\Pi}}
\newcommand{\pre}{{\mathrm {pre}}}
\renewcommand{\top}{{\mathrm {top}}}
\newcommand{\preo}{{\mathrm {preo}}}
\newcommand{\topl}{{\mathrm {topl}}}
\newcommand{\cpre}{\mathrm{cpre}}
\newcommand{\ctop}{\mathrm{ctop}}

\newcommand{\con}{{\mathrm {con}}}
\newcommand{\op}{{\mathrm {op}}}
\newcommand{\ua}[1]{\uparrow \!\! #1}
\newcommand{\da}[1]{\downarrow \! #1}

\newcommand{\uai}[2]{\uparrow_{\! #1} \! #2}
\newcommand{\dai}[2]{\downarrow_{\! #1} \! #2}

\newcommand{\Pcon}{\Pre}
\newcommand{\alin}{\mathrm{Alin}}
\newcommand{\alinb}{\mathrm{AlinBu}}

\newcommand{\vect}{\text{\bf vect}}
\newcommand{\minPre}{\text{minPre}}
\newcommand{\EGP}{{\rm{EGP}}}
\newcommand{\PRE}{{\rm{PRE}}}
\newcommand{\MOD}{{\rm{MOD}}}
\newcommand{\oU}{\overline{U}}

\newcommand{\fone}{{\mathbf 1}}
\newcommand{\one}{1}
\newcommand{\ldim}{\text{lind}}
\newcommand{\bfan}{{\cB}}
\newcommand{\pphat}[1]{\widehat{#1}}
\newcommand{\nfan}{\mathcal{N}}
\newcommand{\modu}{{\rm{mod}}}
\newcommand{\cone}{{\rm{cone}}}
\newcommand{\subc}{\text{Sub}}
\newcommand{\supc}{\text{Sup}}
\newcommand{\face}{\text{face}}
\newcommand{\coface}{\text{coface}}
\renewcommand{\con}{\text{cones}}
\newcommand{\pol}{\text{pol}}

\newcommand{\oV}{\overline{V}}
\newcommand{\oP}{\overline{P}}
\newcommand{\oF}{\overline{F}}
\newcommand{\iso}{\cong}

\newcommand{\Sp}{\mathsf S}
\newcommand{\Tp}{\mathsf T}
\newcommand{\Pp}{\mathsf P}
\newcommand{\vectx}{{\bf vect}^{\times}}
\newcommand{\Homf}{\text{Hom}^{\rm{fin}}}

\newcommand{\poly}{{\sf polh}}
\newcommand{\conesp}{{\sf cone}}
\newcommand{\conespa}{{\sf cone}^a}
\newcommand{\lin}{{\sf lin} }
\newcommand{\lina}{{\sf lin}^a }
\newcommand{\res}{\text{res}}

\newcommand{\hP}{\hat{P}}
\newcommand{\hF}{\hat{F}}
\newcommand{\hG}{\hat{G}}

\renewcommand{\egp}{{\sf{egp}}}
\newcommand{\egc}{{\sf{egc}}}
\newcommand{\sm}{{\sf{sm}}}
\renewcommand{\mod}{{\sf{mod}}}
\newcommand{\iunit}{E} 

\newcommand{\bool}{\mathsf{bool}^e}
\newcommand{\smod}{{\mathsf {smod}}}
\newcommand{\egperm}{{\rm{egp}}}
\newcommand{\egca}{{\mathsf {egc}}^a}

\newcommand{\teh}{\overset{H}{\te}}
\newcommand{\tec}{\overset{C}{\te}}

\newcommand{\benh}{\ben_H}
\newcommand{\benc}{\ben_C}

\newcommand{\idmon}{E} 

\newcommand{\bfnull}{{\bf 0}}
\newcommand{\disc}{\bullet}

\newcommand{\sfp}{\mathsf p}
\newcommand{\sfc}{\mathsf c}

\title[Extended generalized permutahedra and cointeracting bialgebras]
{Extended generalized permutahedra and cointeracting bialgebras}
\thispagestyle{empty}
\author{Gunnar Fl\o ystad}
\address{Matematisk institutt,
Universitetet i Bergen, Realfagbygget,
All\'egaten 41
Bergen, Norway}
\email{Gunnar.Floystad@uib.no}
\author{Dominique Manchon}
\address{Laboratoire de Math\'ematiques Blaise Pascal,
CNRS--Universit\'e Clermont-Auvergne,
3 place Vasar\'ely, CS 60026,
F63178 Aubi\`ere, France}
\email{Dominique.Manchon@uca.fr}

\keywords{Extended generalized permutahedra, Submodular functions,
  Preorders, Braid fan, Bimonoids, Bialgebras, Cointeraction }
\subjclass[2010]{16T15, 16T30, 52B05}
\maketitle

\setcounter{tocdepth}{1}

\begin{abstract}
  A Hopf monoid structure on extended generalized permutahedra (EGP) was
  recently introduced by M.Aguiar and F.Ardila.
  We investigate the existence of a
  cointeracting bialgebra structure on EGP’s. We show that a suitable notion of
  cointeraction exists, not in the classical comodule sense,
  but via the framework of measuring algebras.
The  comodule-type map assigns to each polyhedron the sum of pairs of face
and tangent cone at the face.
EGP’s and affine-cone EGP’s form cointeracting bimonoids in species with
EGP as a third measuring structure.

EGP’s are in bijection to 
submodular functions. For an EGP, we also describe explicitly the
submodular functions of its faces and tangent cones.
The braid fan and its relation to
preorders play a key role in this description.
\end{abstract}

\section{Introduction}

Extended generalized permutahedra are polyhedra
which relate to and connect a range of significant structures:
posets and preorders, graphs and hypergraphs, matroids, and building sets.

In \cite{AA2023} M.Aguiar and F.Ardila established a Hopf monoid structure
on extended generalized permutahedra. There is a product $\mu$, the
cartesian product
of polyhedra, and a coproduct $\Delta$: For an extended generalized
permutahedron $\Pi$ on $\RR I$, where $I$ is a finite set, then for
 (a distinguished selection of) $S \sus I$ there is a face of
$\Pi$ which is a product $\Pi_{|S} \times \Pi_{/ S}$, where
the two factors lie in $\RR S$ and $\RR I \backslash S$ respectively.

Research in combinatorial Hopf algebras in the last fifteen years,
with \cite{CEM} as a founding article,
has found that many combinatorial Hopf algebras have
a cointeracting bialgebra. This bialgebra has the same product $\mu$, but
a different coproduct $\delta$, and $\mu, \Delta$ and $\delta$ relate,
they {\it cointeract}, in a specific way. This raises the questions:
Is there such a cointeraction structure $\delta$ for extended
generalized permutahedra (EGP's)? Furthermore, what features of
EGP's does $\delta$ reflect?

   We establish that there is indeed a cointeraction for EGP's,
but in a distinct technical sense than what has been found in the
research done in this direction.
The standard notion of cointeraction is the following: A bialgebra
(usually a Hopf algebra) $(A,\mu_A, \Delta_A, \eta_A, \epsilon_A)$
and a bialgebra $(B, \mu_B, \delta_B,\eta_B, \delta_B)$ cointeract if $A$ is a
(left) comodule over $B$:
\begin{equation} \label{eq:intro-delta} \delta^\prime : A \pil B \te A
\end{equation}
and all structure maps for $\mu_A, \Delta_A, \eta_A$ and $\epsilon_A$
are morphisms of $B$-comodules.

The situation we encounter is different. The notion in the literature
that best reflects our situation is {\it measuring}. It
originates in the situation of algebras and coalgebras, in particular
as {\it measuring coalgebras},  but has shown itself natural 
in great categorical generality, \cite{Hy-Measure, North}.
It is actually the dual notion, that of {\it measuring
  algebra} that we encounter. It turns out that
the comodule map $\delta^\prime$ of \eqref{eq:intro-delta} giving
(standard) cointeracting bialgebras is equivalent to the algebra $B$ measuring
the bialgebras $A$ and $A$. In our situation $A$ will
be extended generalized permutahedra and $B$ those EGP's which are
affine cones (translates of cones). We show there is a right
comodule map
\begin{equation} \label{eq:intro-delta2}
  \delta : A \pil A \te B
\end{equation}
(here $\te$ is a monoidal product), such that the algebra $A$ (in
$A \te B$) measures the bialgebras $A$ (to the left) and $B$
(in contrast to $B$ measuring the bialgebras $A$ and $A$).
In fact, this comodule map is meaningful in full generality. $A$
may be all polyhedra, and $B$ all polyhedra which are affine cones.
But the {\it cointeraction} (or {\it measuring}
by an algebra) of {\it bialgebras}
only makes sense in the setting above when $A$ and $B$ are restricted to EGP's.

The comodule map \eqref{eq:intro-delta2} sends a polyhedron $P$, to the sum
\[ \sum_{F \text{ face of } P} (F, \cone_F(P)) \]
where we sum over all faces $F$ of $P$, and $\cone_F(P)$ is the
affine cone of $F$ at $P$, in the literature also called the {\it tangent cone}
of $P$ at the face $F$. The affine lineality space of $\cone_F(P)$
is the affine linear span of $F$, and the faces of $\cone_F(P)$ are in
bijection to the faces $G$ of $P$ with $F \sus G \sus P$.

Now, specializing $\delta$ to EGP's on a space
$\RR I$ where $I$ is a finite set, these are in bijection to submodular
functions $z : \pow(I) \pil \hat{\RR} = \RR \cup \{ \infty \}$,
where $\pow(I)$ is the power set of $I$.
For the Hopf monoid structure \cite{AA2023} on EGP's, on submodular
functions the coproduct $\Delta$ becomes
\[ z \mapsto \sum_{\stackrel{S \sus I}{z(S) < \infty}} (z_{|S}, z_{/ S}).
  \]
  Here $z_{|S}$ is the restriction of $z$ to $\RR S$, and $z_{/ S}$
  the corestriction to $\RR I \backslash S$ (see Section \ref{sec:submod}).
  This raises the question: Which form does the comodule map $\delta$ in
  \eqref{eq:intro-delta2} take for submodular functions?
  This requires expressions for the submodular function of
  $F$ and of $\cone_F(P)$ for faces $F$ of $P$.

  To achieve this, we recall the fundamental relation of EGP's to the
  braid fan. This is the full fan of $\RR I$ defined by the hyperplane
  arrangements $x_i = x_j$ for distinct $i,j$ in $I$.
  A {\it braid cone} is a cone
  which is defined by an intersection of a selection of half-spaces
  $\{x_i \leq x_j \}$. Braid cones turn out to be in bijection
  with preorders $P$ on $I$, and we write $k(P)$ for the corresponding
  braid cone. 

  By definition, an extended generalized permutahedron,
  is a polyhedron whose normal fan is a coarsening of a subfan of the
  braid fan. The normal cone of a face $F$ of this polyhedron is a
  braid cone $k(P)$
  for some preorder $P$. In Theorem \ref{thm:egp-face} we give the
  submodular function of the face $F$.
 Denoted $z_P$, it is constructed from $z$ and $P$. 
 Theorem \ref{thm:egp-cone} gives the submodular function of
 the tangent cone $\cone_F(P)$ of $P$ at $F$.
 Denoted $z^P$, it is simply given by $z^P(S) = z(S)$ whenever $S$ is
 a down-set of $P$, and $z^P(S) = \infty$ otherwise.

 \medskip
 The organization of the article is as follows.

 \medskip
 \noindent{{\bf Part I. Cones and polyhedra.}}
 This parts contains general material on cones and polyhedra, before
 in Part II introducing extended generalized permutahedra.
 Section \ref{sec:cones} considers cones in a real vector space and
 for a given cone $C$ establishes a Galois connection between the subcones
 and supercones of $C$. Section \ref{sec:species} recalls the notion
 of species. Monoids, comonoids and bimonoids in species are the
 appropriate notions for formulating our results.
 Section \ref{sec:polyh} considers polyhedra in a real vector space
 and the defining equations of its faces and tangent cones. It also
 establishes the comodule map $\delta: \poly \pil \poly \times \conespa$
 for the species of polyhedra and affine cones.

\medskip
 \noindent{{\bf Part II. Extended generalized permutahedra and submodular
     functions.}}
   Section \ref{sec:braid} recalls the braid fan in $\RR I$ and how
 braid cones corresponds to preorders. A specialization of the Galois
 connection in Section \ref{sec:cones} gives a Galois connection
 between  the sub- and super-preorders of a fixed preorder $P$. We
 recall how these describe the faces of a braid cone.
 Section \ref{sec:submod} recalls the notions of submodular and of modular
 functions. Section \ref{sec:egp} give the bijection between submodular
 functions and extended generalized permutahedra (EGP), and the restricted
 bijection between modular functions and EGP's which are affine cones.
 Theorems \ref{thm:egp-face} and \ref{thm:egp-cone}
 describes the submodular functions of i) faces and ii) tangent cones of
 an EGP.
 Section \ref{sec:cointer} gives our main theorem on the cointeracting
 bimonoid species of EGP's and the EGP's which are affine cones.

 \medskip
 \noindent{{\bf Appendix. Measuring algebras and species}}
 We present the notions of measuring coalgebras and algebras. We
 show that the standard notion of cointeracting bialgebras can be
 reformulated with the notion of measuring algebra. We give the
 variant of two bialgebras being measured by an algebra
 which applies in our setting.

 \medskip
 \noindent{{\bf Note.} The present article is a substantial rewriting of
   a previous unpublished preprint ``Submodular functions,
   generalized permutahedra,
   conforming preorders, and cointeracting bialgebras''.
   The present article shifts focus to polyhedral geometry, and introduces
   the notion of measuring algebra for the cointeraction of bialgebras.

\section{Cones}

\label{sec:cones}

Let $V$ be a finite-dimensional vector space over the real numbers $\RR$
so $V$ is isomorphic to $\RR^n$ for some $n$. We do not at this point
fix a basis, so we use $V$ as notation for our vector space.

We consider {\it polyhedral cones} in $V$, cones defined by a finite
set of linear forms
\[ \{ x \in V \, | \, w_i(x) \leq 0, \, i \in I\}. \]
For a cone $C$, its negative cone is $-C = \{-c \, | \, c \in C \}$. 
The {\it lineality space} of $C$ is $C \cap -C$, the largest linear
subspace of $V$ contained in $C$.

\subsection{Faces and cofaces}
We shall consider (polyhedral) subcones $D \sus C$ and
supercones $C \sus E$, and
partially order these by inclusion. Denote these respectively by
$\subc(C)$ and $\supc(C)$. There are order-preserving maps
  \begin{align} \label{eq:cones-galois} \subc(C) & \bihom{\Phi}{\Gamma} \supc(C) \\
  D & \mapsto C + (-D)  \notag\\
  C \cap (-E) & \mapsfrom E \notag
  \end{align}

  \begin{lemma} This is a Galois connection.
  \end{lemma}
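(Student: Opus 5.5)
We need to show that for $D \in \subc(C)$ and $E \in \supc(C)$ we have $\Phi(D) \sus E$ if and only if $D \sus \Gamma(E)$, that is,
\[ C + (-D) \sus E \iff D \sus C \cap (-E). \]
Before this, we check that $\Phi$ and $\Gamma$ are well defined and order-preserving. For $D \sus C$ polyhedral, $C + (-D)$ is a Minkowski sum of polyhedral cones, hence a polyhedral cone. It contains $C$, since $0 \in D$. For $E \supseteq C$, the set $C \cap (-E)$ is an intersection of polyhedral cones, hence polyhedral, and it lies in $C$. Monotonicity is immediate: $D \sus D'$ gives $-D \sus -D'$ and hence $C + (-D) \sus C + (-D')$, and similarly $E \sus E'$ gives $C \cap (-E) \sus C \cap (-E')$.

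For the main equivalence, suppose first that $C + (-D) \sus E$. Since $0 \in C$, we get $-D \sus E$, so $D \sus -E$. Together with $D \sus C$ this gives $D \sus C \cap (-E)$.

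Conversely, suppose $D \sus C \cap (-E)$. Then $-D \sus E$, and also $C \sus E$ because $E \in \supc(C)$. Since $E$ is a convex cone, it is closed under addition, so $C + (-D) \sus E + E = E$.

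Equivalently, one can check the unit and counit inequalities $D \sus \Gamma\Phi(D)$ and $\Phi\Gamma(E) \sus E$. The first holds because $-D \sus C + (-D)$. The second holds because $C + (C \cap E) \sus E$, using $-(-E) = E$.

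There is no real obstacle here. The only points needing care are that $0$ lies in every cone, so the Minkowski sum contains each summand, and that polyhedrality is preserved by Minkowski sums (via Minkowski–Weyl) and by intersections.
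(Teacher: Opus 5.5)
Your argument is correct and is essentially the paper's proof: both directions of $C+(-D)\subseteq E \iff D\subseteq C\cap(-E)$ are argued the same way, and you make explicit the use of $C\subseteq E$ and of closure of $E$ under addition, which the paper leaves implicit in the converse. One minor slip in your optional counit check: $-(C\cap(-E)) = (-C)\cap E$, not $C\cap E$, so $\Phi\Gamma(E) = C + ((-C)\cap E)$; this is still contained in $E$ for the same reason, so the conclusion stands.
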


\begin{proof}
  We must show $\Phi(D) \sus E$ iff $D \sus \Gamma(E)$.
  So assume $\Phi(D) = C + (-D) \sus E$. Let $d \in D$. We have $-d \in E$
  and so $d \in -E$. Thus $d \in C \cap (-E)$.

  Conversely, let $D \sus \Gamma(E) = C \cap (-E)$. For $d \in D$ we have $-d \in E$,
  and so $C + (-D) \sus E$.
\end{proof}
    
As a consequence, by general properties of Galois correspondences,
the images $\im(\Phi)$ and $\im(\Gamma)$ are in bijection.

\begin{lemma} \label{lem:cones-face}
    The image $\im(\Gamma)$ are the faces of $C$.
  \end{lemma}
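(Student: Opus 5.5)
The plan is to prove the two inclusions directly from the linear-inequality descriptions, using the standard notion of a face of a polyhedral cone $C$: a subset of the form $C \cap \{x \in V \, | \, w(x) = 0\}$, where $w$ is a linear form with $w(x) \leq 0$ for all $x \in C$. Taking $w = 0$ gives $C$ itself as a face. Since every cone contains $0$, the smallest face is nonempty; it is the lineality space.

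\emph{Every face lies in $\im(\Gamma)$.} Let $F = C \cap \{w = 0\}$ with $w \leq 0$ on $C$. Put $E = \{x \, | \, w(x) \leq 0\}$. This is a polyhedral cone, and it contains $C$, so $E \in \supc(C)$. Moreover $-E = \{x \, | \, w(x) \geq 0\}$. For $x \in C$ we already have $w(x) \leq 0$, so
\[ \Gamma(E) = C \cap (-E) = C \cap \{w = 0\} = F. \]
When $w = 0$ this construction gives $E = V$ and $\Gamma(V) = C$, consistent with $C$ being a face.

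\emph{Every element of $\im(\Gamma)$ is a face.} Let $E \supseteq C$ be a polyhedral cone, written as $E = \{x \, | \, w_j(x) \leq 0, \ j \in J\}$ for a finite set of linear forms. Then $-E = \{x \, | \, w_j(x) \geq 0, \ j \in J\}$. Because $C \sus E$, each $w_j$ satisfies $w_j \leq 0$ on $C$. Hence
\[ C \cap (-E) = C \cap \bigcap_{j \in J} \{w_j = 0\}. \]
Now set $w = \sum_{j \in J} w_j$. Then $w \leq 0$ on $C$. For $x \in C$ all the summands $w_j(x)$ are nonpositive, so $w(x) = 0$ holds exactly when every $w_j(x) = 0$. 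Therefore $C \cap (-E) = C \cap \{w = 0\}$, which is a face of $C$. If $J$ is empty, then $E = V$ and we get $C$ itself.

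There is no genuine obstacle here; the only point needing care is the convention for faces. The argument uses that faces are cut out by supporting linear forms through the origin. This is legitimate for cones, since any supporting hyperplane of a cone must pass through $0$. The summing trick in the second inclusion shows that the intersection of the exposed faces $C \cap \{w_j = 0\}$ is again exposed.
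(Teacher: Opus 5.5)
Your proof is correct and follows the paper's argument. Showing that every face lies in $\im(\Gamma)$ via the half-space $E=\{w\le 0\}$ is exactly the paper's use of $H^+$. The reverse inclusion is the paper's argument through a supporting hyperplane $H$ of $E$ cutting out its lineality space $E\cap(-E)$: your form $w=\sum_{j\in J} w_j$ simply makes explicit such an $H=\{w=0\}$, whose existence the paper takes for granted.
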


  \begin{proof}
    We first show that the elements of $\im(\Gamma)$ are faces. 
    The lineality space $ E \cap (-E)$ of $E$ is a face of $E$. Let
    $H$ be a supporting hyperplane for this face, so $E$ is contained
    in one of the half spaces defined by $H$, and $H \cap E = E \cap (-E)$.
    Consider $G(E) = C \cap (-E)$. We claim that $H \cap C = C \cap (-E)$:
    \begin{align*} H \cap C = H \cap (E \cap C) = 
      (H \cap E) \cap C  = & (E \cap (-E)) \cap C \\
  = & (E \cap C) \cap (-E) =  C \cap (-E).
  \end{align*}
  Now, for a face $F$ of $C$, we show it is in the image of $\Gamma$.
  We can write $F = C \cap H$ for some hyperplane, where $C$ is contained
  in the half plane $H^+$, and so $H^+$ is a supercone of $C$. Then
  \[ F = C \cap H = C \cap (H^+ \cap -H^+) = (C \cap H^+) \cap (-H^+)
    = C \cap (-H^+). \]
\end{proof}

  The elements of $\im(F)$ are the {\it cofaces of $C$}.
  We thus have a bijection
  \[ \face(C) \overset{1-1}{\longleftrightarrow} \coface(C). \]

  To the face $D$ corresponds the coface $C + (-D)$. We also
  denote this as $\cone_D(C)$, the {\it cone of $C$ at the face $D$}.
  Note that the minimal face of this, the lineality space of $\cone_D(C)$, is
  the linear span $D + (-D)$ of $D$. 

  \begin{lemma} \label{lem:cone-FG}
    For a face $F$, the faces of $\cone_F(C)$ are the $\cone_F(G)$
    as  $G$ runs over the faces of $C$ with $F \sus G \sus C$.
  \end{lemma}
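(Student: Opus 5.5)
We argue by duality with normal cones, using the face/coface bijection of Lemma \ref{lem:cones-face}. Recall $\cone_F(C) = C + (-F)$. Since $F \sus C$, this equals $C + \mathrm{span}(F)$: any $c + \lambda f - \mu f'$ with $f, f' \in F$ and $\lambda, \mu \ge 0$ lies in $C - F$, because $C + F \sus C$. Likewise, for a face $G \supseteq F$ we have $\cone_F(G) = G + (-F) = G + \mathrm{span}(F)$. The plan is to describe the faces of $K := C + L$, with $L = \mathrm{span}(F)$, by supporting linear forms and compare them with the faces of $C$ containing $F$.

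First I identify the supporting functionals. Faces of a polyhedral cone $K$ are exactly the sets $K \cap \ker w$ for linear forms $w$ with $w \le 0$ on $K$. Now $w \le 0$ on $K = C + L$ if and only if $w \le 0$ on $C$ and $w$ vanishes on $L$. The latter holds if and only if $w \le 0$ on $C$ and $w|_F = 0$, since $w \le 0$ on $F$ already and $L$ is spanned by $F$. These are precisely the functionals whose exposed face $G = C \cap \ker w$ of $C$ contains $F$. Conversely, every face $G \supseteq F$ of $C$ arises this way, because every face of a polyhedral cone is exposed.

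Second, for such a $w$ with $G = C \cap \ker w$, I compute the face $K \cap \ker w$. If $c + \ell \in \ker w$ with $c \in C$ and $\ell \in L$, then $w(c) = -w(\ell) = 0$, so $c \in G$. Hence
\[ K \cap \ker w = G + L = G + (-F) = \cone_F(G). \]
Thus every face of $\cone_F(C)$ has the form $\cone_F(G)$ with $F \sus G \sus C$, and each such $\cone_F(G)$ is a face. The statement only asks for this description, so injectivity of $G \mapsto \cone_F(G)$ is not needed. It does hold, however: $(G + L) \cap C = G$, since if $g + \ell = c$ then applying a $w$ exposing $G$ gives $w(c) = 0$.

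The main point needing care is the claim that all faces of a polyhedral cone are exposed, together with the convention that $C$ itself (with $w = 0$) counts as a face. These are standard for polyhedral cones and are implicitly used in the proof of Lemma \ref{lem:cones-face}, where each face is written as $C \cap H$. Beyond that, the argument reduces to the sign bookkeeping above.
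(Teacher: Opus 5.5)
Your proof is correct and is essentially the paper's argument written with supporting functionals instead of supercones. The paper writes faces as $C \cap (-E)$ for supercones $E$ (Lemma \ref{lem:cones-face}), matches supercones of $C+(-F)$ with supercones of $C$ whose face contains $F$, and shows $(C+(-F))\cap(-E) = G+(-F)$ by your step that the $C$-component of a point already lies in $G$; your $w$ is just the half-space supercone $E=\{w\le 0\}$, and your injectivity check for $G\mapsto \cone_F(G)$ is a worthwhile addition, since coassociativity of $\delta_{\mathsf c}$ implicitly needs this bijection.
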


  \begin{proof}
    Let $F \sus G \sus C$ be a face of $C$ containing the face $F$.
    Then $G = C \cap (-E)$ for some supercone $E \supseteq C$, and
    so $-G \sus E$.
    Then $E \supseteq C + (-G) \supseteq C + (-F)$, so $E$ is also
    a supercone of $C + (-F)$. 
    We claim that $(C + (-F)) \cap (-E)$ is $G + (-F)$, and so the latter
    is a face of $\cone_F(C) = C + (-F)$.  Clearly $G + (-F)$ is
    contained in this intersection.
    If $c + (-f)$ is in $(C + (-F)) \cap (-E)$
    then $c$ is in $-E$, and so in $G = C \cap (-E)$. Thus
    $c + (-f)$ is in $G + (-F)$.

Conversely, let $G^\prime$ be a face of $C + (-F)$, so
$G^\prime = (C + (-F)) \cap (-E)$ for some supercone $E \supseteq C + (-F)$.
Let $G = C \cap (-E)$. We claim that $G^\prime = G + (-F)$.
Clearly $G^\prime \supseteq G + (-F)$.  Let then $c + (-f)$ be in
$G^\prime = (C + (-F)) \cap (-E)$.
Then $c \in -E$ and so $c$ is in $G = C \cap (-E)$. 
  \end{proof}

\subsection{Directions and duals} \label{subsec:duals}
An element in the dual vector space $W = V^*$ is a {\it direction}.
If $w$ assumes a maximum value on a cone $C$, this value must be zero.
The {\it dual cone} $C^\vee$ of $C$ is the set of all $w \in W$ which assume
this maximal value on $C$. The elements of $C^\vee$ are the {\it bounded
  directions} for $C$. If $C$ is defined by the hyperplanes
$\{ v \in V \, |\, w_i(v) \leq 0, \, i \in I \}$,
the dual cone $C^\vee$ is generated by the rays
$\{ w_i \in W \, | \, i \in I \}$.

Duality is an involution and it reverses inclusions:
if $D \sus C$, then $C^\vee \sus D^\vee$. 
Furthermore, sums goes to intersection and vice versa:
\[ (C+D)^\vee = C^\vee \cap D^\vee, \quad (C \cap D)^\vee = C^\vee + D^\vee. \]
In particular the Galois connection \eqref{eq:cones-galois} becomes
the dual Galois connection:
\begin{align} \label{eq:cones-galoisdual}
  \supc(C^\vee) & \bihom{}{} \subc(C^\vee) \\
  E^\prime  & \mapsto C^\vee \cap  (-E^\prime)  \notag\\
  C^\vee +  (-D^\prime) & \mapsfrom D^\prime. \notag
\end{align}

For a bounded direction, the set
\[ C_w = \{ x \in V \, | \, w(x) = 0 \} \]
is a face of $C$. All faces are obtained this way. Such $w$ give
the supporting hyperplanes of a face.

\begin{lemma} \label{lem:cone-w} Let $F$ be a face of $C$.
\begin{itemize}
\item[a.] $w$ is a bounded direction for $C$ with $F \sus C_w$ if and only if
$w$ is a bounded direction for $\cone_F(C)$.
\item[b.] In this case $\cone_F(C_w) = \cone_F(C)_w$.
\end{itemize}
\end{lemma}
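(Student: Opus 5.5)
Both parts reduce to elementary manipulations with $\cone_F(C) = C + (-F)$ and the sign convention that $w \in C^\vee$ means $w \le 0$ on $C$. Throughout we use that $F \sus C$, so $F \sus \cone_F(C)$ (take $0 \in F$), and that $-F \sus \cone_F(C)$ (take $0 \in C$).

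\emph{Part a.} For the forward direction, let $w \in C^\vee$ with $F \sus C_w$, so $w \le 0$ on $C$ and $w = 0$ on $F$. For $c - f \in C + (-F)$ we get $w(c - f) = w(c) - w(f) = w(c) \le 0$, so $w$ is a bounded direction for $\cone_F(C)$. For the converse, let $w \le 0$ on $C + (-F)$. Since $C \sus C + (-F)$, we get $w \le 0$ on $C$. Since $-F \sus C + (-F)$, we get $w(-f) \le 0$, that is, $w(f) \ge 0$ for every $f \in F$. Combined with $w \le 0$ on $F \sus C$, this gives $w = 0$ on $F$, hence $F \sus C_w$. In dual language, a) says that $\cone_F(C)^\vee = C^\vee \cap F^\perp$. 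This also follows from the duality rules $(C + D)^\vee = C^\vee \cap D^\vee$ in \S\ref{subsec:duals}, since $C^\vee \cap (-F)^\vee = C^\vee \cap F^\perp$ on $C^\vee$.

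\emph{Part b.} Here I read $C_w$ as $C \cap \ker w$, which is what makes it a face. First, $C_w$ is a face of $C$ containing $F$, so $\cone_F(C_w) = C_w + (-F)$ makes sense; it is also the face of $\cone_F(C)$ corresponding to $G = C_w$ in Lemma~\ref{lem:cone-FG}. We must show
\[ C_w + (-F) = (C + (-F)) \cap \ker w. \]
For $\sus$: if $c \in C_w$ and $f \in F$, then $w(c - f) = 0 - 0 = 0$, and $c - f \in C + (-F)$. For $\supseteq$: if $c - f \in C + (-F)$ with $w(c - f) = 0$, then $w(f) = 0$ because $F \sus C_w$, so $w(c) = 0$ and $c \in C_w$. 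This is the same argument as in the proof of Lemma~\ref{lem:cone-FG}.

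The only genuine subtlety is the converse in part a: showing that $w$ vanishes on $F$. It needs both inclusions $C \sus \cone_F(C)$ and $-F \sus \cone_F(C)$, and it needs them to squeeze $w$ to zero on $F$. Everything else is direct verification.
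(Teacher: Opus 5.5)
Your proof is correct and follows the paper's argument essentially line by line. Part a.\ evaluates $w$ on $C+(-F)$ and uses that this set contains both $C$ and $-F$ to force $w(F)=0$; part b.\ is the same set comparison $\{c-f \mid w(c)=0\} = \{c-f \mid w(c-f)=0\}$ once $w(F)=0$. Your converse in a.\ just spells out what the paper leaves implicit, and you rightly read $C_w$ as $C\cap\ker w$, which is what the paper intends.
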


\begin{proof}
  a. Suppose $w(F) = 0$, and $w(C) \leq 0$. Then $w$ also has maximum value
  $0$ on $C-F$. For the converse, if $w$ has maximum value $0$ on $C-F$,
  then we cannot have $w(f) \neq 0$ for any $f \in F$. So $w(F) = 0$.

  b. We have $w(F) = 0$. Then
  \begin{align*} \cone_F(C_w) & = C_w - F = \{ c-f \, | \, w(c) = 0 \} \\
    \cone_F(C)_w & = \{ c-f \, | \, w(c-f) = 0\},
  \end{align*}
and these two sets are equal.
\end{proof}

 \section{Species}

\label{sec:species}

We recall briefly the notion of species, monoids, comonoids and bimonoids
in species. We also show how polyhedral cones become a bimonoid in species.

Let $\setx$ be the category of sets with bijections, and
$\cC$ a category. 
A {\it species} in $\cC$ is a functor
\[ \Sp : \setx \pil \cC. \]
This notion was introduced by A.Joyal \cite{J1981}, see
\cite{AM2010, BLL1998} for books developing this notion in detail.


\subsection{Monoids and comonoids in species}
Assume now $(\cC, \te, \idmon)$ is a symmetric monoidal category
with product $\te$ and identity element $\idmon$. 
We also assume it has 
finite products and coproducts and these are equal, and we use
$\coprod$ to denote these. In particular $\cC$ has a zero object $\bfnull$.
The typical example of $\cC$ is
the category of vector spaces with tensor product, although
our setting for $\cC$ will be slightly different.

Two monoidal products may then be defined on the category of species.
For species $\Sp$ and $\Tp$:

\begin{itemize}
\item The Cauchy product:
\[ (\Sp \tec \Tp)[I] = \bigoplus_{I = S \sqcup T}
  \Sp[S] \te \Tp[T], \quad \text{identity object }
\benc[I] = \begin{cases} \idmon, & I = \emptyset \\ \bfnull, & I \neq \emptyset.
\end{cases} \]
\item The Hadamard product:
  \[ (\Sp \teh \Tp)[I] = \Sp[I] \te \Tp[I], \quad \text{identity object }
\benh[I] = \idmon, \text{ for every } I. \]
\end{itemize}
With each of these monoidal products we have the notions of
{\it monoids, comonoids}, and {\it bimonoids}.

\medskip
\subsection{The category $\setn$} \label{subsec:species-setn}
We will not work with vector spaces as our symmetric monoidal category,
since our objects are most simply and naturally considered as sets.
A set can be made into 
the basis of a vector space, but that would be introducing unnecessary
structure.

Instead the symmetric monoidal category which will be convenient for us
is the following, introduced in \cite{F-CHA}. 
For a set $Y$, let $\Homf(Y, \NN)$ be the set of maps $p : Y \pil \NN$
such that $p(y)>0$ for only a finite set of $y$'s in $Y$.
So $\Homf(Y,\NN)$ identifies as finite multisubsets of $Y$.

Let $\setn$ be the category whose objects are sets
and whose morphisms $f : X \pil Y$ are maps $X \pil \Homf(Y,\NN)$.
So an element $x \in X$ maps to a finite sum $\sum_y n^x_y y$.
The category $\setn$ has a $0$-object, the empty set, and finite products
and coproducts are equal
\[ X \prod Y = X \coprod Y = (X \times{1}) \cup (Y \times {2}). \]
Our monoidal product on $\setn$ will be the (classical cartesian)
product $X \times Y$ of sets.
\subsection{A variation of species}

Let $\vectx$ be the category of finite-dimensional vector spaces,
with invertible linear maps. Species are normally defined as functors
from the category of sets with bijections as morphisms.
We shall consider the following variant of species, functors
\[ \Pp : \vectx \pil \cC, \]
with $\cC$ as before, a symmetric monoidal category with finite products
and coproducts equal. Let us call this a {\it v-species}.

For this setting, the Hadamard product of two species is well defined.
The Cauchy product is not meaningful, but we can make the
following ad hoc definition of a monoid in v-species.
This is a v-species $\Pp$ together with product maps  and unit map
\[ \Pp(V) \otimes \Pp(W) \pil \Pp(V \oplus W), \quad \benc \pil \Pp, \]
fulfilling the natural axioms of associativity and unitality for
a monoid. 

\subsection{Bimonoids of species of cones}

In the real vector space $V$, denote by $\conesp(V)$ the set of
polyhedral cones in $V$. Then $\conesp$ is a v-species in $\setn$.
It is a monoid in species by taking products of cones:
\begin{align*} \mu: \conesp(V) \times \conesp(W) & \pil \conesp(V \oplus W) \\
 (C_1, C_2) & \mapsto C_1 \times C_2.
\end{align*}
It is moreover a comonoid in v-species for the Hadamard product, with
coproduct
\begin{align} \label{eq:cone-delta}
  \delta_{\mathsf c}: \conesp(V) & \pil \conesp(V) \times \conesp(V) \\ \notag
  C & \mapsto \sum_{F \text{ face of } C} (F, \conesp_F(C)).
\end{align}
Coassociativity follows from Lemma \ref{lem:cone-FG} and that for
faces $F \sus G$, then
\begin{align*} \cone_{\cone_F(G)}(\cone_F(C)) = & \cone_F(C) - \cone_F(G) \\
  = & (C-F) - (G-F) = C-G = \cone_G(C). \end{align*}
The counit is:
\[ \epsilon_{\mathsf c} : \conesp \pil \benh, \quad C \mapsto \begin{cases} (* \mapsto 1), & C \text{ a linear
      space} \\ (* \mapsto 0), & \text{otherwise.} \end{cases}
\]

Moreover, faces of
$C_1 \times C_2$ are of the form $F_1 \times F_2$ with $F_1$ a face of $C_1$
and $F_2$ a face of $C_2$, and
\begin{align*} \cone_{F_1 \times F_2}(C_1 \times C_2)
  & = C_1 \times C_2 - F_1 \times F_2 \\
&  = (C_1 - F_1)\times (C_2 - F_2) = \cone_{F_1}(C_1) \times \cone_{F_2}(C_2).  
\end{align*}
Thus $\delta_{\mathsf c}( C_1 \times C_2) = \delta_{\mathsf c}(C_1)
\times \delta_{\mathsf c}(C_2)$,
and so this makes $\conesp$ into a {\it bimonoid of v-species}.

\section{Polyhedra}

\label{sec:polyh}

We recall basic notions for polyhedra. We are particularly concerned
with the tangent cone of a face of the polyhedra. We define a bimonoid
of polyhedra and the tangent cone is a main ingredient for this. 
Again $V$ be a finite dimensional real vector space, and $W = V^*$ its
dual space.

\subsection{Polyhedra}
\begin{definition}
A polyhedron $P$ in $V$ is a subset of $V$
defined by a finite set of forms $w_1, \ldots, w_n
\in W = V^*$, and real numbers $b_1, \ldots, b_n$, as
\begin{equation} \label{eq:polyh-wb}
  P = \{ x \in V \, |\, w_i(x) \leq b_i, \, i  \in I\}.
  \end{equation}
$P$ is a {\it cone} if every $b_i = 0$, and an {\it affine cone}
if it is a translation of a cone.
\end{definition}

Denote the following:
\begin{itemize}

\item $\poly(V)$: the set of polyhedra in $V$,
\item $\conespa(V)$: the subset of affine cones in $V$,
\item $\conesp(V)$: the subset of cones of $V$,
\end{itemize}

The following is straight-forward.
\begin{lemma} \label{lem:polyh-conea}
  The inequalities in \eqref{eq:polyh-wb} define an affine
  cone iff the equalities $w_i(x) = b_i$ has a solution. The solution
  set is the affine lineality space of the cone.
\end{lemma}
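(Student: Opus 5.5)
Throughout we assume $P$ is nonempty, as is implicit in the statement. Write $P$ as in \eqref{eq:polyh-wb}. The plan is to prove the two implications separately and then identify the solution set $L = \{x \in V \,|\, w_i(x) = b_i,\ i \in I\}$ with the affine lineality space.

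For the implication ``solution exists $\Rightarrow$ affine cone'', pick $x_0 \in L$ and substitute $x = x_0 + y$. Since $w_i(x_0) = b_i$, the condition $w_i(x) \le b_i$ becomes $w_i(y) \le 0$. Hence $P = x_0 + C$, where $C = \{y \,|\, w_i(y) \le 0,\ i \in I\}$ is a polyhedral cone, so $P$ is an affine cone. Moreover $L = x_0 + (C \cap -C)$, because $w_i(y) = 0$ for all $i$ exactly when both $y$ and $-y$ lie in $C$. So $L$ is the translate of the lineality space of $C$ through the apex point $x_0$, which is what the affine lineality space should mean.

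For the converse, suppose $P = v + C'$ for some polyhedral cone $C'$. The main obstacle is that the defining inequalities $w_i \le b_i$ need not be the ones used to define $C'$. So we must show directly that the equations $w_i(x) = b_i$ have a common solution. First, each $w_i$ is bounded above on $P$, and $P - v = C'$ is a cone. Hence $w_i$ is bounded above on $C'$, so by the remark in \S\ref{subsec:duals} its maximum on $C'$ is $0$, attained at $0 \in C'$. Therefore $\max_P w_i = w_i(v) \le b_i$. If $w_i(v) = b_i$ for every $i$, we are done with $x_0 = v$.

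In general, however, an inequality may be redundant, with $w_i(v) < b_i$ strictly. Then the system $w_i(x) = b_i$ need not have a solution; for instance $\{x \le 0,\ x \le 1\}$ in $\RR$ is a cone. So the converse as literally stated requires that the description be irredundant, or that each inequality be tight somewhere on $P$. We read ``the inequalities define an affine cone'' in this tight sense: each inequality is attained on $P$, i.e.\ $\max_P w_i = b_i$. Under that reading, $w_i(v) = b_i$ for all $i$, so $v \in L$ and the solution exists.

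Finally, for the identification of $L$, the first part already gives $L = x_0 + \mathrm{lin}(C)$ for any $x_0 \in L$, with $P = x_0 + C$. This is independent of the choice of $x_0$, since two points of $L$ differ by an element of $C \cap -C$. So $L$ is the smallest nonempty face of $P$, namely its affine lineality space.
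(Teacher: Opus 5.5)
The paper gives no proof of this lemma (it only calls it straightforward), so there is no competing route to compare. Your argument is the natural one, and both the ``if'' direction and the identification of the solution set with $x_0+(C\cap -C)$ are correct.

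Your objection to the ``only if'' direction is also correct. Both $\{x\le 0,\ x\le 1\}\subset\RR$ and the single trivial inequality $0\le 1$ define cones, yet in each case the equalities are inconsistent. So the statement as written is false, and a hypothesis such as yours is genuinely needed. Irredundance also suffices, as you say. For nonempty $P$, an irredundant inequality is automatically attained: move along a segment from a point of $P$ to a point that satisfies all the other inequalities but violates this one.

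Your two parts in fact prove the sharp version. The equalities $w_i(x)=b_i$ have a common solution if and only if $P$ is an affine cone and every $w_i$ attains $b_i$ on $P$, because any solution $x_0$ lies in $P$ and attains each bound.

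This tight form is exactly what the paper later uses in the proof of Proposition~\ref{pro:mod-desc} (``a minimal face, defined by making equalities of the defining inequalities''). There, the tightness of each $x_S\le z(S)$ with $z(S)<\infty$ is supplied by Fact~\ref{fact:egp-1S}, so that downstream use is sound.
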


By forcing equalities in \eqref{eq:polyh-wb} for a subset $J \sus I$,
one gets a face of $P$, and any face is obtained this way.

Let $u$ be any point of $P$. The {\it recession cone} of $P$ is 
\[ \res(P) = \{ x \in V \, | \, u + \lambda x \in P \text{ for all }
  \lambda > 0. \}
\]
By \cite[Prop.1.12a]{Zi}, it is the cone defined by the inequalities
$w_i(x) \leq 0$, where $w_i$ are the hyperplanes in \eqref{eq:polyh-wb}
defining $P$.

\subsection{The normal fan}
Given polyhedron $P$, and a direction $w \in W$. We say $w$ is bounded
if it assumes a maximum value $c$ on $P$.
The set
\[ P_w = \{ x \in P \, | \, w(x) = c \} \]
is then a face of $P$. Every face may also be obtained this way.
For a face $F$, the set of all directions $w$
attaining their maximum value on $F$  is the {\it normal cone} $c(F)$ of $F$.
If $F \sus G$ are faces, the cone $c(G)$ is a face of the cone $c(F)$. 
The set of normal cones $c(F)$ as $F$ ranges over the faces of $P$, form
a fan, the {\it normal fan} $N_P$.
The union of all these normal cones form a cone which is dual
to the recession cone of $P$, by \cite{Zi}, Theorem 1.2 and Proposition 1.12ii.
In particular if $P$ is a polytope, the normal fan is full: The union of
its cones is the full space $V$. 

The normal cone $c(P)$ is the
dual of the linear span of $P$, and so is a linear space, which is
the lineality space of each cone $c(F)$. If $P$ is also full-dimensional,
all cones $c(F)$ are pointed.
The maximal cones of the normal fan $N_P$, are dual to the minimal faces of $P$.
All minimal faces of $P$ have the same dimension $e$,
and their dual give the maximal cones in the normal fan $N_P$, of
dimension $\dim_k V - e$. 

We remark the following: Given a fan $N$ in $V$, which is a subdivision
of a cone, there may or may not exist a polyhedron with normal fan $N$. 

\subsection{Tangent cones at faces}

\begin{definition} \label{def:poly-cone}
  For a face $F$ of $P$, let $u$ be an interior point of $F$.
  {\it The tangent cone of $P$ at $F$} is the affine cone
\[ \cone_F(P) = \{ u + x \in V \, | \, u + \lambda x \in P \text{ for all }
  \lambda \in (0,\epsilon) \text{ for some } \epsilon > 0 \}.
\]
\end{definition}

\begin{proposition} Let $P$ be defined by $\{ w_i \leq b_i \, | \, i \in I\}$.
  Let the (non-empty) face $F$ be defined by the additional equalities
  $\{w_i = b_i \, | \, i \in J \}$ for some (largest possible) subset $J \sus I$
  (the so-called active set for $F$). 
  Then $\cone_F(P)$ is defined by $\{ w_i \leq b_i \, | \, i \in J \}$.
\end{proposition}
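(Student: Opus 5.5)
We prove the two inclusions between $\cone_F(P)$, as given in Definition \ref{def:poly-cone} with a fixed interior point $u$ of $F$, and
\[ Q = \{ y \in V \, | \, w_i(y) \leq b_i, \, i \in J \}. \]
The one fact about $u$ we use is that, since $J$ is the largest subset on which all of $F$ attains equality and $u$ is in the relative interior of $F$, we have $w_i(u) = b_i$ for $i \in J$ and $w_i(u) < b_i$ for $i \in I \setminus J$. We first justify this. For each $i \notin J$ there is a point $f_i \in F$ with $w_i(f_i) < b_i$, by maximality of $J$. Since $u$ is relatively interior, $u - \eta(f_i - u) \in F$ for small $\eta > 0$. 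If $w_i(u) = b_i$ held, then $w_i$ would be strictly larger than $b_i$ at this point, which is impossible. Hence $w_i(u) < b_i$.

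For the inclusion $\cone_F(P) \sus Q$, take $u + x \in \cone_F(P)$. Then $u + \lambda x \in P$ for all small $\lambda > 0$. For $i \in J$ this gives $b_i + \lambda w_i(x) = w_i(u + \lambda x) \leq b_i$, so $w_i(x) \leq 0$. It follows that $w_i(u + x) = b_i + w_i(x) \leq b_i$, and therefore $u + x \in Q$.

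For the inclusion $Q \sus \cone_F(P)$, write $y = u + x \in Q$. For $i \in J$ we get $w_i(x) = w_i(y) - b_i \leq 0$, and hence $w_i(u + \lambda x) \leq b_i$ for every $\lambda > 0$. For $i \notin J$ we have the slack $b_i - w_i(u) > 0$, so $w_i(u + \lambda x) \leq b_i$ as long as $\lambda \, w_i(x) \leq b_i - w_i(u)$. Since $I$ is finite, there is a single $\epsilon > 0$ for which this holds for all $i \notin J$ and all $\lambda \in (0, \epsilon)$. Thus $u + \lambda x \in P$ for these $\lambda$, which means $y \in \cone_F(P)$.

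The only step that needs care is the first one: strict slack at the interior point $u$ for the indices outside the active set. That is exactly where the hypothesis that $J$ is the largest such subset is used. As a byproduct, the description $Q$ does not involve $u$, so the definition of $\cone_F(P)$ is independent of the choice of interior point. By Lemma \ref{lem:polyh-conea}, $Q$ is an affine cone, since $u$ solves $w_i = b_i$ for all $i \in J$.
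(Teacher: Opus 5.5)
Your proof is correct and takes essentially the same route as the paper's: fix a relative interior point $u$ of $F$ (the paper instead translates so that this point is the origin) and prove both inclusions by examining $u+\lambda x$ for small $\lambda>0$. Your explicit check that $w_i(u)<b_i$ for $i\notin J$ improves on the paper. Its argument needs this strict slack, yet it only records the weaker inequality $b_i'\geq 0$.
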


\begin{proof}
  We translate this so the origin becomes an interior point of the
  face $F$. Then $F$ is defined
  by $w_i = 0$ for $i \in J$ together with $w_i \leq b_i^\prime$ for $i \in I$
  (and $b_i^\prime$ obtained from $b_i$).
  We must
  have each $b_i^\prime \geq 0$, as they must be satisfied by the origin. If
  $x$ fulfills $w_i(x) \leq  0$ for $i \in J$, then some small
  positive multiple $\lambda x$ will fulfill
  all $w_i \leq b_i^\prime$ for $i \in I$,
  and so is in $\cone_F(P)$. Conversely any point $x$ in $\cone_F(P)$
  must fulfill $w_i(\lambda x) \leq 0$ for $i \in J$ and $\lambda$ small
  and positive,  and so fulfills
  $w_i(x) \leq 0$ for $i \in J$.
\end{proof}

\begin{corollary} \label{cor:polyh-face}
  Faces of $\cone_F(P)$ are in bijection to faces $F \sus G \sus P$.
  \begin{itemize}
  \item[a.] The associated face in $\cone_F(P)$ is $\cone_F(G)$,
  \item[b.] $\cone_{\cone_F(G)}(\cone_F(P)) = \cone_G(P)$.
  \end{itemize}
\end{corollary}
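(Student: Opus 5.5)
Translate so that an interior point $u$ of $F$ is the origin. Then $\cone_F(P)$ is the cone defined by $w_i\le 0$, $i\in J$, and I reduce everything to the cone statements of Section \ref{sec:cones}. Since $J$ is the active set of $F$, we have $w_i\le 0$ on $P$ and $w_i=0$ on $F$ for $i\in J$, while $w_i<b_i'$ at the origin for $i\notin J$, i.e.\ $b_i'>0$. Moreover $\cone_F(P)=\bigcup_{\lambda>0}\lambda(P-F)$, the cone generated by $P-F$; equivalently it is the cone $C$ generated by $P$ itself, since $0\in F$. With this, $\cone_F(P)$ has lineality space equal to the linear span of $F$, in the sense of Lemma \ref{lem:polyh-conea}.

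For the bijection in the main statement and part (a), start with a face $G\supseteq F$ of $P$. Its active set $J_G$ is contained in $J$, because constraints tight on all of $G$ are tight on $F\subseteq G$. I claim that the face of $\cone_F(P)$ obtained by imposing equality in $w_i\le 0$ for $i\in J_G$ is exactly $\cone_F(G)$, which by the Proposition is cut out by $w_i\le 0$ for $i\in J$ together with $w_i=0$ for $i\in J_G$ after translation. Both sets are given by the same system, so they agree.

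Conversely, a face of $\cone_F(P)$ is obtained by forcing equality on some subset $K\subseteq J$. Set $G=P\cap\{w_i=b_i,\ i\in K\}$. This is a face of $P$ containing $F$, and its active set $J_G\supseteq K$ satisfies $J_G\subseteq J$. The main point is that the face of $\cone_F(P)$ cut out by $K$ equals the one cut out by $J_G$. I would prove this with the small-multiple argument. If $x$ lies in the $K$-face, then for $\lambda$ small the point $\lambda x$ lies in $P$, since the constraints $i\notin J$ have slack. Also $\lambda x\in G$, because equality holds for $i\in K$. Hence $w_i(\lambda x)=0$ for all $i\in J_G$. This gives the bijection $G\mapsto\cone_F(G)$, and injectivity follows from $G=P\cap\cone_F(G)$ near $u$.

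An alternative route, which I would mention, is to view $P$ locally via the cone $\tilde C=\{(x,t): w_i(x)\le b_i' t,\ t\ge0\}$ and apply Lemma \ref{lem:cone-FG} there. The direct argument is cleaner, however.

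For part (b), translate instead so that an interior point $v$ of $G$ is the origin; $v$ also lies in $\cone_F(G)$. The left side is then defined by those constraints of $\cone_F(P)$, namely $w_i\le 0$ for $i\in J$ after retranslation, that are active on $\cone_F(G)$. By the previous paragraph these are exactly the $i\in J_G$. The right side, $\cone_G(P)$, is defined by $w_i\le b_i$ for $i\in J_G$ by the Proposition, so the two sets coincide. One must check that the retranslation is consistent: it is, because $w_i(v)=b_i$ for $i\in J_G$, and the apex used for $\cone_F(P)$ has the same values on these forms. One could also follow the cone computation of Section \ref{sec:species}, writing $(P-F)-(G-F)=P-G$ in the generated-cone sense.

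The expected main obstacle is the active-set bookkeeping: showing that the active set of $\cone_F(G)$ inside $\cone_F(P)$ is exactly $J_G$. This requires an interior point of $G$, which makes all $i\in J\setminus J_G$ strict. I would take a relative interior point $v$ of $G$ and use that $w_i(v)<b_i$ for $i\notin J_G$; then $v-u$ is an interior point of $\cone_F(G)$ on which exactly the $J_G$ constraints are tight.
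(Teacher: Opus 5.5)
Your proposal is correct and follows essentially the paper's route: you use the Proposition to identify $\cone_F(G)$ with the face of $\cone_F(P)$ cut out by the active equalities of $G$, and the small-multiple argument ($\lambda x\in G$ for small $\lambda>0$) to show that every face of $\cone_F(P)$ arises this way with matching active sets, so part (b) follows because both cones are cut out by the same inequalities. The differences are only in presentation. You show directly that the face cut out by $K$ equals the face cut out by $J_G$, where the paper argues by contradiction with a maximal $K$. You also make explicit two points the paper leaves implicit: injectivity, via $G=P\cap\cone_F(G)$, and exactness of the active set, via a relative interior point of $G$.
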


\begin{proof}
  Let $F$ be defined by $\{ w_i = b_i \, | \, i \in J \}$ where $J \sus I$
  (the active set), together with $w_i \leq b_i$ for $i \in I$.
  For short we say $F$ is
  {\it defined by the pair} $(J,I)$. Let $(K,I)$, where $K \sus J$
  define the face $G$ of $P$. Then $G$ is defined by the inequalities
  \[ \{ w_i \leq b_i \, | \, i \in I \} \cup
    \{ -w_i \leq -b_i \, | \, i \in K \}. \]
  Then $\cone_F(G)$ is defined by
  \[ \{w_i \leq b_i \, | \, i \in J \} \cup \{ -w_i \leq -b_i \, |\, i \in K\},
\]
which is
\[ \{ w_i = b_i \, |\, i \in K \} \cup \{w_i \leq b_i \, | \, i \in J \}, \]
and so is a face of $\cone_F(P)$.

Conversely, let $G^\prime$ be a face of $\cone_F(P)$ defined by the
pair $(K,J)$.
The pair $(K,I)$ induces a face $G$ of $P$, and $G$ contains $F$.
We claim that this pair defines $G$. 

If there was an
$i_0 \in I \backslash K$ with $w_{i_0} = b_{i_0}$ on $G$,
then $F$ would satisfy this
and so $i_0 \in J$. Translating $F$ to contain the origin, we would get
$b_{i_0} = 0$.
Let $u+x$ be a point in $G^\prime$, where $u$ is interior to $F$.
Then $w_j(u) = 0$ for $j \in J$ and $w_j(u+x) = 0$ for $j \in K$.
Thus for $\lambda$ small, positive, $u + \lambda x \in P$, and
$w_j(u + \lambda x) = 0$ for $j \in K$. 
Then $u + \lambda x \in G$, and so
$w_{i_0}(u + \lambda x) = 0$, for all these $\lambda$. Then $w_{i_0}(x)=0$,
and so $w_{i_0}(u+x) = 0$ for points in $G^\prime$, a contradiction.
Thus the pair $(K,I)$ defines the face $G$. Part b. follows, since
both cones are defined by
 \[ \{ w_j \leq b_j \, | \, j \in K \} \]
\end{proof}

\begin{corollary} \label{cor:poly-w}
  The direction $w$ is bounded for $P$ with the face $F \sus P_w$
  iff $w$ is a a bounded direction for $\cone_F(P)$. In this case:
\[ \cone_F(P_w) = \cone_F(P)_w. \]
\end{corollary}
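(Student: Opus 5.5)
The plan is to reduce everything to the explicit inequality description of $\cone_F(P)$ given in the Proposition above. Write $P = \{ w_i \leq b_i \, | \, i \in I\}$ and let $J \sus I$ be the active set of $F$. Then $\cone_F(P) = \{ w_i \leq b_i \, | \, i \in J\}$. Fix an interior point $u$ of $F$ and translate so that $u = 0$. This does not change faces, tangent cones, or whether a direction is bounded, though maximum values shift by $w(u)$. After the translation $b_j = 0$ for $j \in J$, so $\cone_F(P)$ is the genuine cone $C = \{ w_j \leq 0 \, | \, j \in J\}$. Moreover $b_i > 0$ for $i \notin J$, since $u$ is interior to $F$ and $J$ is the full active set.

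For the equivalence I will use the standard LP fact (Farkas) that $w$ is bounded on a nonempty polyhedron $\{w_i \leq b_i\}$ with maximizing face containing a given point $u$ if and only if $w$ is a nonnegative combination of those $w_i$ that are active at $u$.

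First suppose $w$ is bounded on $P$ with $F \sus P_w$. Then the maximum is attained at $u=0$, so $w = \sum_{j \in J} \lambda_j w_j$ with $\lambda_j \geq 0$. Hence $w \leq 0$ on $C$, with maximum $0$, so $w$ is bounded for $C$.

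Conversely, suppose $w$ is bounded on $C$, so $w \leq 0$ on $C$. Then $w \in C^\vee$, which is generated by $\{ w_j \, | \, j \in J\}$ by Subsection~\ref{subsec:duals}. Thus $w = \sum_{j\in J} \lambda_j w_j$ with $\lambda_j \geq 0$. On $P$ each $w_j \leq 0$, so $w \leq 0 = w(u)$, and $w$ is bounded on $P$ with maximum $0$ attained on all of $F$ (each $w_j$ vanishes on $F$). Hence $F \sus P_w$. If I wanted to avoid Farkas in the first direction, I could argue directly: if $w(x) > 0$ for some $x \in C$, then $\lambda x \in P$ for small $\lambda > 0$, because the inequalities for $i \notin J$ have slack. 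This gives $w(\lambda x) > 0 = w(u)$, contradicting maximality at $u$.

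For the identity $\cone_F(P_w) = \cone_F(P)_w$, write $w = \sum_{j \in J} \lambda_j w_j$ and let $K = \{ j \in J \, | \, \lambda_j > 0\}$. Then $P_w$ is the face of $P$ obtained by forcing $w_k = 0$ for $k \in K$: indeed $w(x) = 0$ on $P$ iff each $\lambda_j w_j(x) = 0$. This face contains $F$. Its active set $J'$ satisfies $K \sus J' \sus J$, because any index active on $P_w$ is active on $F$.

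By the Proposition, $\cone_F(P_w)$ is cut out by $w_j \leq 0$ for $j \in J$ together with equalities $w_k = 0$ for $k \in K$. This uses the description of $P_w$ as $P$ with added reverse inequalities $-w_k \leq 0$, exactly as in the proof of Corollary~\ref{cor:polyh-face}. On the other side, $\cone_F(P)_w = \{ x \in C \, | \, w(x) = 0\}$, which by the same sign argument equals $C \cap \{ w_k = 0, \, k \in K\}$. So the two sets coincide.

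The main obstacle is bookkeeping in this last step. Applying the Proposition to $P_w$ requires using its full active set, and the reverse inequalities added for $K$ must be treated correctly. Equivalently, one can invoke Corollary~\ref{cor:polyh-face}(a): $\cone_F(P_w)$ is the face of $\cone_F(P)$ corresponding to $G = P_w$, namely the one obtained by forcing equalities on $K$. It then only remains to check that this face equals $\cone_F(P)_w$.
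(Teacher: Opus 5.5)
Your proof is correct, but it takes a different route from the paper's for the converse and for the final identity.

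\textbf{Forward direction.} The paper proves this exactly by your fallback argument: if $w(x)>0$ for some $x$ with $w_j(x)\le 0$ for $j\in J$, then $\lambda x\in P$ for small $\lambda>0$, contradicting maximality.

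\textbf{Converse.} The paper argues by contrapositive, with two cases. If $w$ is unbounded on $P$, it is unbounded on $\cone_F(P)\supseteq P$. If $w$ is bounded on $P$ but $F\not\sus P_w$, then $w$ is either unbounded on the affine span of $F$, which lies in $\cone_F(P)$, or constant there with value $c$ below its maximum. In the latter case a point $u+\lambda x\in P$ with $w(u+\lambda x)>c$ gives a ray in $\cone_F(P)$ on which $w$ is unbounded. You instead produce a certificate $w=\sum_{j\in J}\lambda_j w_j$ with $\lambda_j\ge 0$, via Farkas, or equivalently via the fact in Subsection~\ref{subsec:duals} that the dual cone of $\{w_j\le 0\}$ is generated by the $w_j$. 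With this certificate the converse becomes a direct verification.

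\textbf{The identity.} The paper simply adjoins $w\le 0$ and $-w\le 0$ to the defining system of $P$. Both are active on $F$, so the Proposition gives $\cone_F(P_w)$ as the set cut out by $\{w=0\}\cup\{w_j\le b_j,\ j\in J\}$, which is visibly $\cone_F(P)_w$. The Proposition does not require an irredundant system, so the support set $K$ and the bookkeeping you flag as the main obstacle are avoidable.

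\textbf{Comparison.} Your version is more uniform, with no case analysis. It also gives an explicit description of $P_w$ as the face defined by the pair $(K,I)$, which ties the identity to Corollary~\ref{cor:polyh-face}(a). The paper's version avoids LP duality and stays purely geometric, at the cost of the contrapositive case split.
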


\begin{proof} Let $F$ be defined by the pair $(J,I)$. 
  We may translate so the origin becomes interior in $F$.
  Also, the inequalities $w_i \leq b_i$
  defining $P$ will have every $b_i \geq 0$.

  Suppose $w$ bounded on $P$ with $F \sus P_w$. The $w$ takes
  maximum value $0$ on $P$.
  If $w$ is not bounded on $\cone_F(P)$, so $w(x)$ became arbitrary large,
  then we must have $w(x) > 0$ for some $x$ fulfilling $w_i(x) \leq 0$
  for $i \in J$. 
  But $\lambda x $ would be in $P$ for $\lambda$ small positive, and so $w$
  could not have maximal value $0$ on $P$. Thus $w$ is bounded
  on $\cone_F(P)$.
  Now the defining set for $\cone_F(P_w)$ and $\cone_F(P)_w$ are
  seen both to be
  \[ \{w = 0 \} \cup \{ w_i \leq b_i, \, i \in J \}. \]

  If $w$ bounded on $P$ but $F$ not contained in $P_w$,  then as $\cone_F(P)$
  contains the lineality space of $F$, $w$ is either not bounded on
  there, or it is constant there, say with value $c$ (which
  is less than the maximum of $w$ on $P$).
  In this latter case, there must be
  $u+ \lambda x$ 
  in $P$, with $u$ in the interior of $F$,
  with  $w(u + \lambda x) > c$. But then $w$ is unbounded on $\cone_F(P)$.
  
  If $w$ is not bounded on $P$, we have
  $w(y)$ arbitrarily large for $y = u+x \in P$, and so also $w(u + x)$
  arbitrarily large for $u$ in the interior of $F$.
\end{proof}

\subsection{Bimonoids of polyhedra}

The v-species $\poly(V)$ becomes a monoid in v-species by the
natural product of polyhedra:
\[ \mu : \poly(V) \times \poly(W) \pil \poly(V \oplus W). \]

We can also make $\poly(V)$ a right comodule monoid over
the affine cones species $\conespa(V)$: First there is a map
\begin{align} \label{eq:poly-delta}
  \delta_{\mathsf p} : \poly(V) & \longrightarrow \poly(V) \times \conespa(V) \\
  P & \mapsto \sum_{F \text{ face of } P} (F,\cone_F(P)). \notag
\end{align}
On $\conespa(V) \sus \poly(V)$, the restriction of $\delta_{\mathsf p}$,
denoted $\delta_{\mathsf c}$, is a general version of \eqref{eq:cone-delta},
making $\conespa$ a bimonoid in species.

\begin{proposition} The map $\delta_{\mathsf p}$ makes $\poly$ a comodule
  in v-species over the comonoid $\conespa$.
\end{proposition}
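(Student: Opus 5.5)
We have to verify the two comodule axioms for $\delta_{\mathsf p}$ over the comonoid $(\conespa,\delta_{\mathsf c},\epsilon_{\mathsf c})$: coassociativity
\[ (\delta_{\mathsf p}\times \id)\circ\delta_{\mathsf p} = (\id\times\delta_{\mathsf c})\circ\delta_{\mathsf p}, \]
and counitality $(\id\times\epsilon_{\mathsf c})\circ\delta_{\mathsf p}=\id$. Before that, we need $\delta_{\mathsf p}$ to be a morphism of v-species in $\setn$. Naturality under invertible linear maps $\phi:V\to V'$ is immediate: $\phi$ maps faces to faces and $\phi(\cone_F(P))=\cone_{\phi(F)}(\phi(P))$, since Definition~\ref{def:poly-cone} is stated in affine-linear terms. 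The sum is finite because $P$ has finitely many faces, and each $\cone_F(P)$ is an affine cone by the Proposition describing it through the active set $J$ of $F$. Here we also note that $\delta_{\mathsf c}$, the restriction to $\conespa$, really lands in $\conespa\times\conespa$: a face of an affine cone is an affine cone, since Lemma~\ref{lem:polyh-conea} still applies after adding equalities.

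For coassociativity, we expand both sides. The left side is
\[ \sum_{F \text{ face of } P}\ \sum_{H \text{ face of } F} (H,\cone_H(F),\cone_F(P)). \]
The right side is
\[ \sum_{H \text{ face of } P}\ \sum_{G' \text{ face of } \cone_H(P)} (H,G',\cone_{G'}(\cone_H(P))). \]
We reindex the left side as a sum over pairs $H\sus F$ of faces of $P$, using that faces of a face $F$ are exactly the faces of $P$ contained in $F$. By Corollary~\ref{cor:polyh-face}, the faces $G'$ of $\cone_H(P)$ correspond bijectively to faces $G$ of $P$ with $H\sus G\sus P$, via $G'=\cone_H(G)$. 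Part~b of that corollary then gives $\cone_{\cone_H(G)}(\cone_H(P))=\cone_G(P)$. Taking $F=G$, the right-side term becomes $(H,\cone_H(F),\cone_F(P))$, and the two sums agree term by term.

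For counitality, $\epsilon_{\mathsf c}(\cone_F(P))$ is $1$ exactly when $\cone_F(P)$ is an affine linear space, and $0$ otherwise. So we must show that this happens only for $F=P$. For $F=P$, the active set $J$ consists of the implicit equalities of $P$, so $\cone_P(P)$ is the affine span of $P$. For a proper face $F$, $\cone_F(P)$ contains points $u+\lambda(p-u)$ for $p\in P\setminus F$, while its affine lineality space is the affine span of $F$ (the solution set of $w_i=b_i$, $i\in J$, by Lemma~\ref{lem:polyh-conea}). Hence $\cone_F(P)$ is strictly larger than its lineality space and is not affine linear. This leaves $(P,\cdot)\mapsto P$, as required.

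The main obstacle is the bookkeeping in the coassociativity step: checking that the reindexing is a genuine bijection of index sets. In particular, we must confirm that Corollary~\ref{cor:polyh-face}a yields each face of $\cone_H(P)$ exactly once, which rests on injectivity of $G\mapsto\cone_H(G)$. I would verify this by recovering $G$ from $\cone_H(G)$ as $\cone_H(G)\cap P$, computed locally near an interior point of $H$ using the active-set description.
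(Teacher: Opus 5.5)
Your proof is correct and follows the same route as the paper: coassociativity by reindexing both sides through the bijection of Corollary~\ref{cor:polyh-face} between faces of $\cone_H(P)$ and faces $H \sus G \sus P$, and counitality by observing that only $F=P$ contributes. You also supply details the paper leaves implicit: naturality, the check that a proper face yields a tangent cone that is not affine linear, and the correct use of part~b to get $\cone_{\cone_H(G)}(\cone_H(P))=\cone_G(P)$ (the paper's displayed term $\cone_{G'}(P)$ is a slip for this).
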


\begin{proof}
  The composition $(\delta_{\mathsf p} \times 1) \circ \delta_{\mathsf p}$ sends
  \[ P \mapsto \sum_{G \sus P} (G, \cone_G(P))
  \mapsto \sum_{F \sus G \sus P} (F, \cone_F(G), \cone_G(P)).\]
  The composition $(1 \times \delta_{\sfc}) \circ \delta_{\mathsf p}$ sends
  \[ P \mapsto \sum_{F \sus P, \, G^\prime \sus \cone_F(P)}
      (F, G^\prime,  \cone_{G^\prime}(P)). \] 
    By Corollary \ref{cor:polyh-face},
    faces $G^\prime$ of $\cone_F(P)$ are in bijection to faces
    $F \sus G \sus P$ and $G^\prime = \cone_F(G)$.
    Thus these two maps are equal.
    
    Lastly, note that
    $(1 \times \epsilon_{\mathsf c}) \circ \delta_{\mathsf p} = 1_{\poly}$. 
  \end{proof}

  Together, the product $\mu$ and the coproduct $\delta_{\sfp}$ make
  $\poly(V)$ into a {\it bimonoid in v-species} due to:
\begin{align*} \cone_{F_1 \times F_2}(P_1 \times P_2) = 
 \cone_{F_1}(P_1) \times \cone_{F_2}(P_2).  
\end{align*}

Extending the counit $\epsilon_{\mathsf c}$ on $\conesp$ and $\conespa$,
we get a morphism
of species
\[ \epsilon_{\mathsf p} : \poly \pil \benh, \quad P \mapsto
  \begin{cases} (* \mapsto 1), &
P \text{ affine linear space} \\ (* \mapsto 0), & \text{otherwise.} \end{cases}
\]
We note that $(\epsilon_{\mathsf p} \times 1) \circ \delta_{\mathsf p}$ sends
\[ P \mapsto \sum_{\text{vertex } v \text{ of } P} \cone_v(P), \]
the sum over the {\it set of cones associated} to $P$, \cite[Ex.9, p.49]{Grun}.

\section{Preorders and the braid fan}

\label{sec:braid}
The hyperplanes $x_i = x_j$ define a central hyperplane
arrangement, and an associated fan in $\RR^I$, the {\it braid fan}.
Polyhedra
whose normal fan is a coarsening of a subfan of the braid fan,
are by definitions the extended generalized permutahedra (EGP).
Braid cones, defined as intersections of half spaces $x_i \leq x_j$
for a selection of pairs $(i,j)$, are precisely in bijection to
preorders on $I$. This gives an intimate relation between EGP's,
braid cones and preorders on $I$. 

\subsection{Preorders}
A preorder $P = (I, \leq)$ on a set $I$ is a relation which is:
\begin{itemize}
\item[] {\it Reflexive:} $x \leq x$ for $x \in I$,
\item[] {\it Transitive:} $x \leq y$ and $y \leq z$ implies $x \leq z$.
\end{itemize}
The preorder is a {\it partial order} if it is also:
\begin{itemize}
\item[] {\it Anti-symmetric:} $x \leq y$ and $y \leq x$ implies $x = y$.
\end{itemize}

A subset $D \sus I$ is a {\it down-set} for $P$ if whenever $y \in D$ and
$x \leq y$, then $x \in D$. If $x \in I$ we write $\da{x}$
for the down-set $\{ y \in I \, | \, y \leq x \}$.
We have a corresponding notion of {\it up-set} and $\ua{x}$.
We write $\leq_P$ for $\leq$ when we need to be explicit about
which preorder we refer to.
Given a preorder $P = (I, \leq)$, we have an {\it opposite preorder}
$P^{\op} = (I,\leq^\op)$ given by $x \leq^\op y$ if $y \leq x$. 

Given $P = (I, \leq)$, two $x,y \in I$ are {\it comparable} for $P$
if $x \leq y$ or $y \leq x$. If only one of them holds, say $x \leq y$,
then $y$ is {\it strictly} bigger than $x$.
A preorder is {\it total} if any two elements are comparable. It is
{\it linear} if for any two elements one of them
is strictly bigger than the other.
A preorder $P$ is an {\it equivalence relation} if $a \leq b$ implies
$b \leq a$.

\medskip
The set of preorders on $I$ is denoted $\Pre(I)$ and is itself partially
ordered by $P \preceq Q$ if $x \leq_P y$ implies $x \leq_Q y$ for
every $x,y \in I$.
In fact this is a lattice. 
For a given preorder $P$, we denote by $P^\circ$ the meet  $P \wedge P^{\op}$.
and by $P^\disc$ the join $P \vee P^{\op}$. These are both
equivalence relations. The classes of $P^\circ$ will be called
the {\it bubbles} of $P$, and ${\mathbf b}(P)$ will denote the
set of bubbles.
The classes of $P^\disc$ are the underlying
sets of the connected { components} of $P$, and ${\mathbf c}(P)$ will
denote this set of classes.

\begin{definition} \label{def:pre-lin}
  Let $P$ be a preorder and $L$ a total preorder
  Then $L$ is
  a {\it linear extension} of $P$ if:
  \begin{itemize}
  \item $P \preceq L$,
  \item The sets of bubbles $\mathbf b(P)$ and $\mathbf b(L)$ coincide.
  \end{itemize}
\end{definition}

\subsection{The braid fan} Let $I$ be a {\it finite} set.
The vector space $\RR I$ has basis $I$, and we write $e_i$ for $i$.
Elements in $\RR I$ are then $\sum_{i \in I} a_i e_i$ where $a_i \in \RR$.
Denote by $\RR^I = (\RR I)^*$ the dual space of linear maps $\RR I \pil \RR$.
The elements $y$ of $\RR^I$ are the {\it directions} for $\RR I$ and
identify as set maps $y : I \pil \RR$. 
We write $\{ \one_i \, | \, i \in I \}$ for the dual basis of
$\{e_i \, | \, i \in I \}$. Points in $\RR^I$ may be written
$b = \sum_{i \in I} b_i \one_i$ where $b_i \in \RR$.
For $S \sus I$, we write $\one_S = \sum_{i \in S} \one_i$.

\medskip
The {\it braid arrangement} on $\RR^I$ is defined by the hyperplanes $x_i
= x_j$, for $i, j$ distinct in $I$. It induces a fan on $\RR^I$, the
{\it braid fan} $\bfan_I$, see \cite[Sec. 1.3.5]{AA2023} or \cite[Sec. 10.2]
{AM2010}.
The maximal face cones of this fan,
called {\it chambers}, correspond to linear orders on $I$. If
$i_1 < i_2 < \cdots < i_n$ is such a total order,
the corresponding chamber is the set
of points $y$ in $\RR^I$ such that
\[ y_{i_1} \geq y_{i_2} \geq \cdots \geq y_{i_n}. \]
In general, the face cones of the braid fan are given by {\it total preorders}
on $I$. A total preorder $L$ on $I$ gives a face cone $k(L)$ consisting
of points $y$ in $\RR^I$ such that $y_i \geq y_j$ if $i \leq_L j$.
In particular, for each bubble $C = \{\ell_1, \ell_2, \cdots, \ell_r\}$
  of $L$ we have
\[y_{\ell_1} = y_{\ell_2} = \cdots = y_{\ell_r}. \]
(In the literature a total preorder on $I$ is often called a
set composition of $I$:
  It is an ordered sequence $(C_1, C_2, \ldots, C_r)$ of disjoint
  non-empty subsets of $I$, whose union is $I$, \cite[Sec.10.1.2]{AM2010}.)

  \medskip
  Each hyperplane $x_i = x_j$ determines two half spaces $x_i \geq x_j$
  and $x_j \geq x_i$. Following \cite[Sec.10.2.4]{AM2010} and
  \cite[Sec.3.4]{PRW2009}, a {\it braid cone} is an
  intersection of half spaces $x_i \leq x_j$ for a selection of
  pairs $(i,j)$ in $I \times I$. 
  A preorder $P$ on $I$ induces such a cone $k(P)$ in $\RR^I$, the
  intersection of half-spaces $x_i \geq x_j$ if $i \leq_P j$.
  Compactly, this may be described as $k(P) = \Hom(P^{\op}, \RR)$
where $\Hom$ means order-preserving maps. 
  An observation which we will use is that $1_A$ is in the
  braid cone $k(P)$ iff $A$ is a down-set in $P$.

  Proposition 3.5 of \cite{PRW2009} comprehensively describes cones
  in the braid arrangement and their bijection to preorders.
  We state an essential part of it below, and also add an equivalent
  description of braid cones. 
The following is Corollary 13.8 (and its proof)  in \cite{AM2010}. 
  
  \begin{proposition}
    \hskip 1mm {} \label{pro:braid-cone}

    \begin{itemize}
\item[a.] The correspondence
    \[ \Pre(I) \longrightarrow \text{ braid cones in } \RR^I,
        \quad P \mapsto k(P) \]
      is a bijection.
\item[b.] The maximal face cones
  of $k(P)$ are the $k(L)$ as $L$ ranges over the linear extensions
  of $P$.
  \end{itemize}
\end{proposition}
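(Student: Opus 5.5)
For part a, I would first show $k$ is surjective and then that it is injective, using the reconstruction
\[ i \leq_{P'} j \iff y_i \geq y_j \text{ for all } y \in C. \]
Surjectivity: a braid cone $C$ is by definition an intersection of half-spaces $x_i \geq x_j$ over some set $R \sus I \times I$ of pairs $(i,j)$. Let $P$ be the reflexive–transitive closure of $R$. The inequalities $y_i \geq y_j$ are reflexive and transitive, so every $y \in C$ satisfies $y_i \geq y_j$ for all $i \leq_P j$. The converse inclusion is trivial since $R \sus P$. Hence $C = k(P)$.

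Injectivity: given $P$, let $P'$ be the preorder read off from $k(P)$ as above. Clearly $P \preceq P'$. For the reverse, suppose $i \not\leq_P j$. Take the down-set $A = \da{j}$ of $P$. Then $j \in A$ and $i \notin A$, since $i \notin \da{j}$. By the observation that $1_A \in k(P)$ exactly when $A$ is a down-set, we have $1_A \in k(P)$, and $(1_A)_i = 0 < 1 = (1_A)_j$. So $i \not\leq_{P'} j$. Thus $P' = P$, the map $P \mapsto k(P)$ has a left inverse, and it is a bijection.

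For part b, I would proceed in three steps.

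First, identify the cones $k(L)$ for $L$ a linear extension of $P$. Since $P \preceq L$, we have $k(L) \sus k(P)$. Moreover, $k(L)$ has the same linear span dimension as the top face: it has dimension equal to the number of bubbles of $L$, which equals $|\mathbf b(P)|$, and this is $\dim k(P)$. The last equality holds because the lineality space of $k(P)$ is the set of $y$ constant on components, while its span consists of $y$ constant on bubbles. Here the test vectors $1_A$, for down-sets $A$, span everything constant on bubbles.

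Second, show that $k(P)$ is the union of the $k(L)$. Take any $y \in k(P)$ and order $I$ by decreasing values of $y$. On ties between distinct bubbles, break them using a linear extension of the induced partial order on $\mathbf b(P)$. This yields a total preorder $L$ with bubbles exactly $\mathbf b(P)$, with $P \preceq L$, and with $y \in k(L)$.

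Third, show that distinct linear extensions $L \neq L'$ give cones with disjoint relative interiors. There are bubbles $B, B'$ ordered oppositely in $L$ and $L'$, and the hyperplane $x_B = x_{B'}$ separates the two cones.

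The main obstacle is reconciling the word ``face cones'' in part b. The cones $k(L)$ are not faces of $k(P)$ but the maximal cones of the braid-fan subdivision of $k(P)$ restricted to its span. So I would interpret part b as saying that the maximal braid-fan cones contained in $k(P)$ are the $k(L)$. The covering argument of the second step, together with the dimension count of the first step, is where the real work lies: dimensions must match so that these $k(L)$ are full-dimensional pieces of $k(P)$, not lower-dimensional ones.
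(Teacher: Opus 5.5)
Your proof is correct, and it does more than the paper does: the paper gives no argument here and simply quotes the statement from \cite{AM2010} (Cor.~13.8) and \cite{PRW2009} (Prop.~3.5).

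\textbf{Part a.} Taking the transitive closure for surjectivity, and recovering $P$ from $k(P)$ with the test vectors $1_{\da{j}}$ for injectivity, is a clean direct proof.

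\textbf{Part b.} Your ingredients are the ones the paper uses elsewhere. The dimension count is Proposition~\ref{pro:braid-kp}; the paper proves it by perturbing a general point rather than by spanning with the $1_A$. The covering is the first paragraph of the proof of Proposition~\ref{pro:braid-K}. There your explicit tie-breaking, by a linear extension of the bubble poset, supplies a detail the paper passes over: a point of $k(P)$ may take equal values on distinct bubbles.

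One link is left implicit. To conclude that the maximal braid-fan cones in $k(P)$ are exactly the $k(L)$, you should state two things.
\begin{enumerate}
\item[(i)] Let $M$ be total with $k(M)\subseteq k(P)$, i.e.\ $P\preceq M$ by part a. Refine $M$ inside its bubbles by a linear extension of the bubble order of $P$; this gives a linear extension $L$ of $P$ with $k(M)\subseteq k(L)$.
\item[(ii)] Each $k(L)$ is maximal. Indeed, $k(L)\subseteq k(M)\subseteq k(P)$ means $P\preceq M\preceq L$, which forces $\mathbf b(M)=\mathbf b(L)$ and then $M=L$. Alternatively, use your dimension count together with the fact that braid-fan cones meet in common faces.
\end{enumerate}

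Your step 3 is not needed for the statement, since distinctness of the $k(L)$ already follows from part a. It does, however, show that the $k(L)$ subdivide $k(P)$.
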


\begin{proposition} \label{pro:braid-kp}
  \hskip 1mm
  \begin{itemize}
  \item The linear span of $k(P)$ is $k(P^\circ)$. Thus
    its dimension is $|\mathbf b(P)|$.
  \item The lineality space of $k(P)$ is $k(P^\disc)$. Thus its
    lineality dimension is $|\mathbf c(P)|$.
  \end{itemize}
\end{proposition}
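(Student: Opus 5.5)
Both parts come down to describing $k(P) = \Hom(P^{\op},\RR)$ concretely as the set of $y \in \RR^I$ with $y_i \geq y_j$ whenever $i \leq_P j$.

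\emph{Linear span.} First note that any $y \in k(P)$ is constant on each bubble $C$ of $P$. Indeed, if $i,j \in C$ then $i \leq_P j$ and $j \leq_P i$, so $y_i = y_j$. Hence $k(P) \sus k(P^\circ)$. The space $k(P^\circ)$ is the linear space of functions constant on bubbles, so it has dimension $|\mathbf b(P)|$.

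To show that the span is all of $k(P^\circ)$, I will show that $k(P)$ is full-dimensional inside it. Choose a linear extension $L$ of $P$; one exists by taking a linear order on the quotient poset of bubbles. By Proposition \ref{pro:braid-cone}b, $k(L)$ is a face of $k(P)$. Say $L$ orders the bubbles as $C_1 < \cdots < C_r$. Then the vectors $\one_{C_1}$, $\one_{C_1 \cup C_2}$, \ldots, $\one_{C_1\cup\cdots\cup C_r}$ lie in $k(L) \sus k(P)$, since each union is a down-set of $L$ and hence of $P$, and we may use the observation that $\one_A \in k(P)$ iff $A$ is a down-set of $P$. These $r$ vectors are linearly independent, and they span the functions constant on bubbles. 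So the span of $k(P)$ is $k(P^\circ)$.

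\emph{Lineality space.} The lineality space is $k(P) \cap -k(P)$. This is the set of $y$ with $y_i = y_j$ whenever $i \leq_P j$. Propagating these equalities along chains of comparabilities, this is exactly the set of $y$ constant on the classes of the equivalence relation generated by $P$, which is the join $P^\disc = P \vee P^{\op}$. That set is $k(P^\disc)$, and its dimension is $|\mathbf c(P)|$, with basis $\one_K$ for $K \in \mathbf c(P)$.

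The main point needing care is the identification of $P^\disc$ with the equivalence closure of $P$. Concretely, the join of $P$ and $P^{\op}$ in $\Pre(I)$ is the transitive closure of their union. That union is symmetric and reflexive, so its transitive closure is the equivalence relation generated by comparability. Everything else is routine.
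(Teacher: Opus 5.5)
Your proof is correct. Part (b) is essentially the paper's argument: a point of $k(P)\cap(-k(P))$ has $y_i=y_j$ whenever $i\leq_P j$, hence is constant on components. You also supply a justification the paper leaves implicit, namely that $P^\disc=P\vee P^{\op}$ is the transitive closure of $P\cup P^{\op}$, i.e.\ the equivalence relation generated by comparability.

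For part (a) the skeleton is the same. One has $k(P)\sus k(P^\circ)$, the latter is a linear space of dimension $|\mathbf b(P)|$, so it suffices that $k(P)$ be full-dimensional in it. You prove full-dimensionality differently, though. The paper picks a general point $y\in k(P)$, with $y_p>y_q$ whenever $p<q$. It notes that the common values on the bubbles can be perturbed independently without leaving $k(P)$, so $k(P)$ contains a relatively open subset of $k(P^\circ)$. You instead exhibit a basis of $k(P^\circ)$ lying in $k(P)$: the indicator vectors $1_{C_1\cup\cdots\cup C_k}$ of the initial segments of a linear extension.

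Your version is more explicit. It makes visible the existence of a linear extension, which the paper's choice of a general point takes for granted. It also ties the span directly to down-sets, in line with the later use of the directions $1_S$. The paper's version has the advantage of directly producing relative-interior points of $k(P)$.

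One small inaccuracy: Proposition~\ref{pro:braid-cone}b does not say that $k(L)$ is a \emph{face} of $k(P)$. The $k(L)$ are the maximal braid-fan cones subdividing $k(P)$. For example, if $P$ has only the relations $i\leq_P i$, then $k(P)=\RR^I$ has no proper faces, while for $|I|\geq 2$ each $k(L)$ is a proper chamber. This does not affect your argument, since you only use $k(L)\sus k(P)$, which is immediate from $P\preceq L$. You could even bypass $k(L)$ entirely, since the initial segments are already down-sets of $P$.
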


\begin{proof}
a.  Choose a general point $y$ in $k(P) = \Hom(P^{\op}, \RR)$.
  Then $y_p > y_q$ if $p < q$, while $y_p = y_q$ if $p,q$ are in
  the same bubble. We may freely perturb every coordinate in distinct
  bubbles and still be in $k(P)$. Thus the dimension is $|\mathbf b(P)|$.
  Since $P^{\circ} \preceq P$, the cones $k(P) \sus k(P^{\circ})$.
  But the latter is a linear space of dimension $|\mathbf b(P)|$.
  Since the former is a cone of this dimension, the latter must be
  the linear span of the former.

  b. If $p < q$ and $y$ is a point in the lineality space of $k(P)$, then
  both $y$ and $-y$ is in $k(P)$, and so we must have $y_p = y_q$.
  Thus $y$ must take an arbitrary constant value on each component, but
  we are free to choose distinct values on each component.
\end{proof}

\begin{corollary} 
  The braid cone $k(P)$ is a linear space iff $P$ is
  an equivalence relation. 
\end{corollary}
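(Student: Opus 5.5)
The corollary follows from Proposition \ref{pro:braid-kp} by comparing the cone $k(P)$ with its lineality space and its linear span. A cone is a linear space exactly when it equals its lineality space. The lineality space $k(P^\disc)$ is contained in $k(P)$, which is contained in its linear span $k(P^\circ)$. So $k(P)$ is a linear space if and only if the lineality space equals the linear span, that is, $k(P^\disc) = k(P^\circ)$.

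For the direction ``$P$ an equivalence relation $\Rightarrow$ $k(P)$ linear'', I would note that if $P$ is symmetric then $P = P^{\op}$. Hence $P^\circ = P \wedge P^{\op} = P$ and $P^\disc = P \vee P^{\op} = P$. Then $k(P) = k(P^\circ)$ is its own linear span, hence a linear space. Directly, $k(P)$ is then cut out by the equations $y_i = y_j$ for $i \leq_P j$, which define a subspace.

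For the converse, suppose $k(P)$ is a linear space. Then $k(P) = -k(P)$. I would observe that $-k(P) = k(P^{\op})$: indeed, $y \in k(P)$ means $y_i \geq y_j$ whenever $i \leq_P j$, and negating gives $(-y)_j \geq (-y)_i$, which is exactly the condition for $-y \in k(P^{\op})$. So $k(P) = k(P^{\op})$. By the injectivity in Proposition \ref{pro:braid-cone}a, $P = P^{\op}$, so $P$ is symmetric. Being reflexive, transitive and symmetric, $P$ is an equivalence relation.

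Alternatively, one can argue with dimensions. Linearity of $k(P)$ forces $|\mathbf c(P)| = |\mathbf b(P)|$. Since each component of $P$ is a union of bubbles, every component is then a single bubble, so $P^\disc = P^\circ$, and $P^\circ \preceq P \preceq P^\disc$ gives $P = P^\circ$, an equivalence relation. Neither route has a real obstacle. The only point needing care is the identity $-k(P) = k(P^{\op})$ (or, in the second route, the comparison of bubbles with components), and both are immediate from the definitions.
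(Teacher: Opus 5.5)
Your proof is correct. The paper gives no separate argument: the corollary sits right after Proposition~\ref{pro:braid-kp} and is meant to be read off from it. That is exactly your first paragraph together with your ``alternative'' route: $k(P)$ is linear iff its lineality space $k(P^\disc)$ equals its linear span $k(P^\circ)$, iff $|\mathbf c(P)| = |\mathbf b(P)|$, iff $P^\circ = P^\disc$, iff $P = P^\circ$.

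Your primary argument for the converse is genuinely different. It uses $-k(P) = k(P^{\op})$ and the injectivity of $P \mapsto k(P)$ from Proposition~\ref{pro:braid-cone}a. It therefore does not need the span/lineality computation at all, and your reduction to $k(P^\disc) = k(P^\circ)$ plays no role there. The trade-off is that it leans on the bijection quoted from \cite{AM2010}, whereas the dimension route uses only Proposition~\ref{pro:braid-kp}, which the paper proves.

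The injectivity you need is in fact elementary. The paper observes that $1_A \in k(P)$ iff $A$ is a down-set of $P$. So if $i \not\leq_P j$, then $y = 1_{\da{j}} \in k(P)$ has $y_i = 0 < 1 = y_j$. Hence $P$ is recovered from $k(P)$ as the set of pairs $(i,j)$ with $y_i \geq y_j$ on all of $k(P)$.
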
 

\subsection{Extended generalized permutahedra}

All face cones of the braid fan contain the line $\RR \fone$.
It is therefore equivalent and convenient to consider the {\it reduced} braid
fan in the quotient space $\RR^I /\RR \fone$.
All face cones of the
reduced braid fan then become {\it pointed}, i.e. their minimal face is the
origin.

\medskip

\ignore{Given any polyhedron $\Pi$ in $\RR I$,
the points of $\Pi$ for which a direction in $\RR^I$ reaches maximal value,
form a face of $\Pi$, the maximal face for the direction.
We consider two directions
equivalent if they have the same maximal face $F$ of $\Pi$. Such
an equivalence class of directions
form an open cone.
The closure of this is the cone $k(F)$ associated to the face $F$.
These closed cones form a fan, the {\it normal fan} 
of the polyhedron. Note that smaller faces $F$ correspond to larger cones
$k(F)$.}


\begin{figure}

{\begin{tikzpicture}[scale=0.5, vertices/.style={draw, fill=black, circle,
inner sep=1.5pt}]

\draw [help lines, white] (-2,-1) grid (3,2);
        \draw[black, ->] (-2-12,0) -- (3.5-12,0) node [anchor=west] {$x_a$};
        \draw[black, ->] (0-12,-2) -- (0-12,3.5) node [anchor=west] {$x_b$};

\coordinate (a) at (2-12,1);
\coordinate (b) at (1-12,2);

\node[vertices] at (a) {}; 
\node[vertices] at (b) {};
\node[anchor=north] at (a) {$\scriptstyle{(2,1)}$}; 
\node[anchor=south] at (b) {$\scriptstyle{(1,2)}$};

\draw[black, very thick] (a)--(b);






\draw [help lines, white] (-2,-1) grid (3,2);
        \draw[black, ->] (-3.5,0) -- (3.5,0) node [anchor=west] {$y_a$};
        \draw[black, ->] (0,-3.5) -- (0,3.5) node [anchor=west] {$y_b$};

\coordinate (a) at (-3,-3);
\coordinate (b) at (3,3);
\coordinate (c) at (-5,-1);
\coordinate (d) at (1,5);
\coordinate (e) at (-1,-5);
\coordinate (f) at (5,1);

\draw[black, very thick] (a)--(b);

\draw[white, fill=gray, fill opacity = 0.1] (c)--(d)--(f)--(e) ;


\coordinate (p) at (-1.4 + 6.8,0);
\node at (p) {$\rightsquigarrow$}; 

\coordinate (x) at (-1.4 + 8,1.4);
\coordinate (y) at (1.4+8,-1.4);
\coordinate (z) at (0+8,0);

\node[vertices] at (z) {}; 
\draw[black, very thick] (x)--(y);
\end{tikzpicture}
}
 \caption{One-dimensional permutahedron and its fan and reduced fan }
        \label{fig:fanperm-1}
\end{figure}

Given fans $U$ and $V$ of $\RR^I$, the fan $U$ is a {\it coarsening} of $V$, if
\begin{itemize}
  \item Each cone of $V$ is contained in a cone of $U$
 \item  Each cone of $U$ is a union of cones of $V$.
 \end{itemize}

We recall Definitions 1.3.7 and 1.3.8 from \cite{AA2023}.

 \begin{definition}
 A {\it generalized permutahedron} (GP) is a  polytope whose
 normal fan is a coarsening of the braid fan. 
 An {\it extended generalized permutahedron} (EGP) is a polyhedron
 whose normal fan is a coarsening of a subfan of a braid cone.
\end{definition}

 (Note however that not every coarsening of the braid fan may
 be the normal fan of a generalized permutahedron
 \cite[Section 3]{MW2009}.)

 \subsection{Braid cones}
 \label{subsec:braidcone}
 
 The normal cone $K$ of a face of an extended generalized permutahedron
 it thus a cone that may be written as a union of cones of the braid
 arrangement. The following is stated at the beginning of
 Section 3.2 in \cite{PRW2009}, but an argument is not given. We
 provide one here, since it is an essential fact for us.

 \begin{proposition} \label{pro:braid-K}
   A cone $K$ is a union of cones
   from the braid arrangement if and only if $K = k(P)$ for some
   preorder $P$.
\end{proposition}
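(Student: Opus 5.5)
One direction is immediate from Proposition \ref{pro:braid-cone}b: if $K = k(P)$, then its maximal faces are the chambers-or-faces $k(L)$ for $L$ running over the linear extensions of $P$. I will show that $k(P)$ is the union of these $k(L)$. Take $y \in k(P)$ and let $L$ be the total preorder $i \leq_L j$ iff $y_i \geq y_j$. Then $y \in k(L)$ and $k(L)$ is a cone of the braid fan. Moreover $P \preceq L$, because $i \leq_P j$ forces $y_i \geq y_j$. So $y$ lies in a braid-fan cone contained in $k(P)$: indeed $P \preceq L$ gives $k(L) \sus k(P)$. Hence $k(P)$ is a union of cones from the braid arrangement. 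This direction does not even need linear extensions; it only uses the ordering $P \preceq L \Rightarrow k(L)\sus k(P)$.

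For the converse, let $K$ be a (convex, polyhedral) cone that is a union $\bigcup_{L \in \mathcal L} k(L)$ of braid-fan cones, with $\mathcal L$ a set of total preorders. Define the relation $P$ by $i \leq_P j$ iff $y_i \geq y_j$ for all $y \in K$. This is clearly reflexive and transitive, so $P$ is a preorder, and by construction $K \sus k(P)$. The whole task is the reverse inclusion $k(P) \sus K$.

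For this I will use that $K$ is a convex cone whose defining inequalities can be taken to be linear forms that vanish on walls of the braid arrangement. Each facet of $K$ is a union of codimension-one cones of the braid fan (as $K$ is a union of closed braid cones), so its supporting hyperplane is a braid hyperplane $x_i = x_j$. Thus $K = \{ y \mid y_i \geq y_j \text{ for } (i,j)\in R\}$ for a finite set $R$ of pairs; here I use that a full-in-its-span polyhedral cone is the intersection of its facet half-spaces within its linear span. The linear span of $K$ is the span of the cones $k(L)$. Each of these spans is cut out by equalities $y_i = y_j$, and the span of a union of such subspaces inside a cone is again described by such equalities, namely those that hold on all of $K$. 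Hence $K$ is an intersection of half-spaces $x_i \geq x_j$ (writing equalities as two inequalities), so it is a braid cone and equals $k(P')$ for a preorder $P'$. By the bijection of Proposition \ref{pro:braid-cone}a together with the definition of $P$, we get $P' = P$.

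I expect the main obstacle to be the step asserting that every facet hyperplane, and every equation of the linear span, is of the form $x_i = x_j$. For the facets, I will argue that a facet $F$ has dimension $\dim K - 1$ and is covered by finitely many braid cones inside $K$. Some of these must have full dimension in $F$, and such a braid cone lies in the intersection of the span of $K$ with a braid hyperplane $x_i=x_j$ not containing $\operatorname{span} K$. For the span, I will note that the span of $\bigcup k(L)$ contains $\fone_A$ for the down-sets $A$ of each $L$. From this I can read off that it equals $k(Q^\circ)$, where $Q$ is the equivalence relation generated by "same value on all of $K$".
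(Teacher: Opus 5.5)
Your forward direction is correct and a bit cleaner than the paper's. The total preorder $L_y$ read off from $y$ gives $y\in k(L_y)\sus k(P)$ directly, whereas the paper must choose a linear extension of $P$ (same bubbles) containing $y$.

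Your converse takes a genuinely different route. The paper proves $k(P)\sus K$ combinatorially. A generic point of $K$ lies in some $k(L)\sus K$ with $L$ a linear extension of $P$. Convexity of $K$, via perturbing a point on the common wall of $k(L)$ and $k(L')$ towards a point of $K$, then propagates $k(L')\sus K$ across swaps of adjacent incomparable bubbles, hence to all linear extensions, and $k(P)=\bigcup k(\tilde L)$ finishes. You instead show that $K$ is cut out by braid half-spaces (its span plus one inequality per facet), so $K$ is a braid cone. Your argument is more general: it works for any convex union of cones of the fan of a central arrangement, and it needs no connectivity of linear extensions under adjacent swaps. The price is relying on the facet description of polyhedral cones. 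The paper's argument stays inside preorder combinatorics and identifies exactly which braid cones make up $K$. Also, once $K=k(P')$ you are done; $P'=P$ is not needed.

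One step must be repaired: the linear span. Your justification (spans of the $k(L)$, the vectors $\fone_A$) never uses convexity, and without convexity the claim is false. In $\RR^4$, $U_1=\{y_1=y_2,\ y_3=y_4\}$ and $U_2=\{y_1=y_3,\ y_2=y_4\}$ are braid cones, and no equality $y_i=y_j$ holds on $U_1\cup U_2$. Yet $U_1+U_2=\{y_1+y_4=y_2+y_3\}\neq\RR^4$, so nothing can be read off from the $\fone_A$ alone. Use instead the dimension argument you already use for facets. $K$ is convex of dimension $d$ and a finite union of closed cones $k(L)$, so some $k(L_0)\sus K$ has dimension $d$. Then $\mathrm{span}\,K=\mathrm{span}\,k(L_0)=k(L_0^\circ)$ by Proposition \ref{pro:braid-kp}, which is cut out by equalities $y_i=y_j$.

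In the facet step, likewise, the pieces covering a facet $F$ should be the faces $k(L)\cap F$ of the $k(L)$, which are again braid-fan cones, not the $k(L)$ themselves. Also, \emph{codimension one} must be meant relative to $\dim K$, not to $\RR^I$.
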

   
   \begin{proof}
     $k(P)$ are the order-preserving maps $P^{\op} \pil \RR$.
As $\RR$ is totally ordered, any element here
     is then in $L^{\op} \pil \RR$ for some linear extension $L$ of $P$.
     Thus $k(P)$ is the union of $k(L)$ as $L$ ranges over the linear
     extensions of $P$.
     
     Conversely, let $K$ be a cone which is a union of cones from the braid
     arrangements. Let $P$ consist of all pairs $(i,j)$ such that $y_i
     \leq y_j$ for every $y \in K$. This is seen to be a preorder
     with $K \sus k(P)$. We claim that $K = k(P)$.
     
Define an equivalence
  relation on $I$ by $i \sim j$ if $y_i = y_j$ for every $y \in K$.
  The equivalence classes are the bubbles of $P$.
  For every pair $C,D$ of distinct bubbles there is some point $y     \in K$
  with $y_{c} \neq y_{d}$ for $c \in C$ and $d \in D$.

  Let then $y$ be a general point in $K$. Then $y_c \neq y_d$ for
  $c,d$ in any two distinct bubbles. 
  By assumption, $y$ is in a cone $k(L) \sus K$.
  Then $\mathbf b(L) =\mathbf  b(P)$, so the dimensions of $k(L)$, $K$, and
  $k(P)$ are the same by Proposition \ref{pro:braid-kp}.
  The total preorder $L$ yields a total preorder on the bubbles,
  say
  \[ C_1 > C_2 > \cdots > C_{i-1} > C_i > C_{i+1} > C_{i+2} > \cdots > C_d. \]
  Suppose $C_i$ and $C_{i+1}$ are not comparable in $P$. Switching
  $C_i$ and $C_{i+1}$, there is then
  a linear extension $L^\prime$ of $P$ given by
  \[ C_1 > C_2 > \cdots > C_{i-1} > C_{i+1} > C_{i} > C_{i+2} > \cdots > C_d. \]
  We claim that $k(L^\prime) \sus K$. Since $k(L)$ is a closed cone, there
  is a point $\overline{y} \in k(L)$ with
  \[ \overline{y}_{C_1} >  \cdots > \overline{y}_{C_{i-1}} >
    \overline{y}_{C_i} =   \overline{y}_{C_{i+1}} >  \overline{y}_{C_{i+2}}
    > \cdots >  \overline{y}_{C_d}. \]
  There is also a point $x \in K$ with $x_{C_{i+1}} > x_{C_i}$.
  A point $y^\prime = \epsilon x + \overline{y}$ is then in $K$ and for
  $\epsilon$ sufficiently small
  \[ y^\prime_{C_1} >  \cdots > y^\prime_{C_{i-1}} >
    y^\prime_{C_{i+1}} >   y^\prime_{C_{i}} >  y^\prime_{C_{i+2}}
    > \cdots >  y^\prime_{C_d}. \]
  Then $K$ intersects the interior of $k(L^\prime)$ and so $k(L^\prime)$ is
  contained in $K$. We may in this way continue and switch bubbles, such
  that for every linear extension $\tilde{L}$ of $P$ we have $k(\tilde{L})
  \sus K$. Hence the union of all such cones, over the linear extensions
  of $P$, are in $K$ and since this union is $k(P)$, we get $K = k(P)$. 
\end{proof}

\begin{lemma} Let $C = k(P)$ and $D = k(Q)$ be braid cones.
  \begin{itemize}
  \item[a.] $C \cap D$ is the braid cone $k(P \vee Q)$.
  \item[b.] If $D \sus C$, then $C+ (-D)$ is the braid cone
    $k(P \wedge Q^{\op})$.
  \end{itemize}
\end{lemma}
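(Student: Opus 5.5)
We prove part a.\ directly from the definitions. For part b.\ we first show that $C+(-D)$ is a braid cone, and then identify its preorder using the recipe in the proof of Proposition \ref{pro:braid-K}.

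Throughout, $P \mapsto k(P)$ is order-reversing: if $P \preceq Q$ then $k(Q) \sus k(P)$. It is also injective by Proposition \ref{pro:braid-cone}. The preorder is recovered from the cone: $i \leq_P j$ iff $y_i \geq y_j$ for all $y \in k(P)$. Recall that in $\Pre(I)$ the meet $P \wedge Q$ is the intersection of the relations, and the join $P \vee Q$ is the transitive closure of their union.

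For a., a point $y$ lies in $C \cap D$ iff $y_i \geq y_j$ whenever $i \leq_P j$ or $i \leq_Q j$. The inequalities $y_i \geq y_j$ compose transitively, so $y$ then also satisfies them for every pair in the transitive closure, that is, for $P \vee Q$. Hence $C \cap D \sus k(P \vee Q)$. Conversely, $P, Q \preceq P \vee Q$ gives $k(P \vee Q) \sus k(P) \cap k(Q)$.

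For b., first note that $-D = k(Q^{\op})$. Also $D \sus C$ forces $P \preceq Q$: every relation valid on all of $k(P)$ is valid on $k(Q)$, so it belongs to the preorder recovered from $k(Q)$, which is $Q$. Set $R = P \wedge Q^{\op}$, the intersection of the relations. Then $R \preceq P$ and $R \preceq Q^{\op}$, so $k(P)$ and $k(Q^{\op})$ both lie in the convex cone $k(R)$. This gives $C + (-D) \sus k(R)$.

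For the reverse inclusion we aim to prove the following claim: $C+(-D)$ is a union of cones of the braid fan. Once the claim holds, Proposition \ref{pro:braid-K} shows that $C+(-D) = k(R')$, where $R'$ consists of the pairs $(i,j)$ with $x_i \geq x_j$ for all $x \in C+(-D)$. Writing $x = c - d$ and letting $c$ and $d$ vary independently (take one of them to be $0$), the condition becomes: $c_i \geq c_j$ for all $c \in C$, and $d_j \geq d_i$ for all $d \in D$. This says exactly $i \leq_P j$ and $j \leq_Q i$, so $R' = P \wedge Q^{\op}$, as required.

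The claim is the main obstacle. My first attempt is to dualize, using Section \ref{subsec:duals}. The dual $k(P)^\vee$ is generated by the root forms $\one_j - \one_i$ with $i \leq_P j$, since these are exactly the defining inequalities of $k(P)$. Then
\[ (C+(-D))^\vee = k(P)^\vee \cap k(Q^{\op})^\vee. \]
It suffices to show that this intersection is again generated by root forms, namely those $\one_j-\one_i$ with $i \leq_P j$ and $j \leq_Q i$. Dualizing back then gives exactly $k(P\wedge Q^{\op})$.

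To prove this, view an element of $k(P)^\vee$ as a nonnegative flow on the directed graph of $P$. Such a flow decomposes into paths, and by transitivity each path can be replaced by a single edge. The cone is therefore cut out by cut conditions of the form "$w$ has total weight $0$ on each component of $P$ and $w(U) \geq 0$ on up-sets $U$ of $P$", up to the sign convention. The corresponding conditions for $Q^{\op}$ come from the down-sets of $Q$. Here the hypothesis $P \preceq Q$ is what should let the two families of cut conditions combine into those for $P \wedge Q^{\op}$.

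If this flow argument becomes unwieldy, the fallback is primal. Take a generic point $x = c - d$ of $C+(-D)$, let $L$ be the total preorder cutting out the braid cell containing $x$, and show $k(L) \sus C+(-D)$. The intended tool is the bubble-swapping perturbation from the proof of Proposition \ref{pro:braid-K}.
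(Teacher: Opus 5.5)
You have part a., the inclusion $C+(-D)\sus k(P\wedge Q^{\op})$, the deduction $P\preceq Q$, and the identification of the preorder cut out by $C+(-D)$ right. The gap is the reverse inclusion $k(P\wedge Q^{\op})\sus C+(-D)$. That inclusion is the whole substance of part b., and you do not prove it.

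\begin{itemize}
\item In your dual route, the decisive sentence, that $P\preceq Q$ ``should let the two families of cut conditions combine'', is only asserted.
\item The cut description of $k(P)^\vee$ also does not follow from path decomposition, which only recovers the generators $e_j-e_i$. It is a Gale/min-cut type statement, available in the paper as $k(P)^\vee=\egperm(\low_P)$ (Proposition \ref{pro:egp-dim}a).
\item Your primal fallback cannot work as stated. The bubble-swapping argument of Proposition \ref{pro:braid-K} starts from a cell $k(L)\sus K$ and uses ``$K$ meets the interior of $k(L')$, hence contains it''. Both steps presuppose that $K$ is already a union of braid cones, which is exactly your claim.
\item The detour through Proposition \ref{pro:braid-K} is unnecessary anyway: an equality of dual cones would give $C+(-D)=k(P\wedge Q^{\op})$ directly.
\end{itemize}

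The missing idea is to use $D\sus C$ for more than $P\preceq Q$. Since $C+D=C$, Proposition \ref{pro:braid-kp} gives $C+(-D)=C+(D+(-D))=k(P)+k(Q^\circ)$, and $P\preceq Q$ gives $P\wedge Q^{\op}=P\wedge Q^\circ$. The paper's argument then runs as follows.
\begin{itemize}
\item Choose a chain $\emptyset=D_0\subset D_1\subset\cdots\subset D_m=I$ of $Q$-down-sets, each step adding one $Q$-bubble. Since $P\preceq Q$, these are also $P$-down-sets.
\item For $f\in k(P\wedge Q^\circ)$, put $g=f-\lambda_t$ on the bubble $D_t\setminus D_{t-1}$, with $\lambda_t-\lambda_{t-1}\geq\max_{i,j}|f_i-f_j|$.
\item Then $g\in k(P)$. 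Inside a bubble, $a\le_P b$ means $a\le_{P\wedge Q^\circ}b$. Across bubbles, $a\le_P b$ forces $a$ into an earlier bubble.
\item Finally $f-g$ is constant on $Q$-bubbles, so it lies in $k(Q^\circ)$.
\end{itemize}

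Your dual route can also be closed with the same two facts. Start from $x_I=0$, $x_S\le 0$ for $P$-down-sets and $x_T\ge 0$ for $Q$-down-sets.
\begin{itemize}
\item Every $Q$-down-set is a $P$-down-set, so $x_T=0$ for all of them. Hence $x_B=0$ for each $Q$-bubble $B$.
\item For a down-set $S$ of $P$ restricted to $B$, the set $(\downarrow_Q B\setminus B)\cup S$ is a $P$-down-set, so $x_S\le 0$.
\end{itemize}
These conditions cut out $\egperm(\low_{P\wedge Q^\circ})=k(P\wedge Q^{\op})^\vee$. This gives $(C+(-D))^\vee\sus k(P\wedge Q^{\op})^\vee$, and biduality finishes the proof.
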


\begin{remark}
  If $C$ and $D$ are braid cones, it is not necessarily true that $C + D$
  is a braid cone.
\end{remark}

\begin{proof} Part a is clear, as $C \cap D$ is cut out by the union
  \[ \{x_i \leq x_j \, | \, i \leq_P j\} \cup \{ x_i \leq x_j \, | \,
    i \leq_Q j \}. \]
  For part b, since $P \leq Q$, we have
  \[ P \wedge Q^{\op} = (P \wedge Q) \wedge Q^{\op} =
    P \wedge (Q \wedge Q^\op) = P \wedge Q^{\circ}. \]
 Furthermore $-D = k(Q^\op)$ and
  $C + (-D) \sus k(P \wedge Q^{\op})$,
  and so
  \[ k(P) + k(Q^{\circ}) = C + (D + (-D)) = C + (-D) \sus k(P \wedge Q^{\op}) . \]
We now show the opposite inclusion. 
  We may find a filtration $\emptyset = D_0 \sus D_1 \sus \cdots \sus D_m = Q$
  of down-sets of $Q$ such that each  $D_{i+1} \backslash D_i$ is a bubble
  of $Q$. As $P \leq Q$, each $D_i$ is also a down-set of $P$.
  Let $f \in k(P \wedge Q^{\circ})$. We construct a $g : P^\op \pil \RR$
  by successive 
  $g_i : D_i^\op \pil \RR$ as follows. To extend $g_i$ to $g_{i+1}$, define
  it as $f$ restricted to the $Q$-bubble, $D_{i+1} \backslash D_i$,
  but add a common large negative value to the $f(p)$ for $p$'s in the bubble,
  so as to make $g_i$ order-preserving on $D_{i+1}^\op$. As the $D_i$
  are all down-sets of $P$ this procedure will give an order-preserving
  $g$ in $k(P)$. Now we may add an element in $k(Q^\circ)$ to $g$ and this
  will give $f$. 
\end{proof}


\begin{figure}
\begin{center}
\begin{tikzpicture}[inner sep=1pt, scale=1.2]
\draw [help lines, white] (-2,-1) grid (3,2);


\node (rr) at (-1.9 , 0.5) {$R:$};
\node (aa2) at (-1.3,0.8) {};
\node (cc2) at (-1.6, 0.3) {};
\node (bb2) at (-1,0.3) {};
\node (dd2) at (-1.3, -0.2) {};

\fill (aa2) circle (0.06);
\fill (cc2) circle (0.06);
\fill (bb2) circle (0.06);
\fill (dd2) circle (0.06);

\draw (aa2)--(bb2);
\draw (cc2)--(dd2);


\node (pp) at (-1.9+1.8, 0.5) {$P:$};
\node (aa) at (-1.3+1.8,0.8) {};
\node (cc) at (-1.6+1.8, 0.3) {};
\node (bb) at (-1+1.8,0.3) {};
\node (dd) at (-1.3+1.8, -0.2) {};

\fill (aa) circle (0.06);
\fill (cc) circle (0.06);
\fill (bb) circle (0.06);
\fill (dd) circle (0.06);

\draw (aa)--(cc); 
\draw (aa)--(bb);
\draw (bb)--(dd);
\draw (cc)--(dd);


\node (qq) at (-1.9 +3.6, 0.5) {$Q:$};
\node (aa3) at (-1.25+3.6,0.75) {};
\node (cc3) at (-1.55+3.6, 0.05) {};
\node (bb3) at (-1.05+3.6,0.55) {};
\node (dd3) at (-1.35+3.6, -0.15) {};

\filldraw[gray,thin] (aa3) circle (0.05);
\filldraw[gray,thin] (cc3) circle (0.05);
\filldraw[gray,thin] (bb3) circle (0.05);
\filldraw[gray,thin] (dd3) circle (0.05);

\node (ss3) at (-1.15+3.6,0.65) {};
\draw (ss3) circle (0.24);
\node (tt3) at (-1.45+3.6,-0.05) {};
\draw (tt3) circle (0.24);


\draw (-1.22+3.6,0.41)--(-1.39+3.6, 0.19);
\end{tikzpicture},
\qquad \qquad
\begin{tikzpicture}[inner sep=1pt, scale=1.2]
\draw [help lines, white] (-2,-1) grid (3,2);


\node (pp) at (-1.9, 0.5) {$P:$};
\node (aa) at (-1.3,0.8) {};
\node (cc) at (-1.6, 0.3) {};
\node (bb) at (-1,0.3) {};
\node (dd) at (-1.3, -0.2) {};

\fill (aa) circle (0.06);
\fill (cc) circle (0.06);
\fill (bb) circle (0.06);
\fill (dd) circle (0.06);

\draw (aa)--(cc); 
\draw (aa)--(bb);
\draw (bb)--(dd);
\draw (cc)--(dd);


\node (qq) at (-1.9 +1.8, 0.5) {$Q:$};
\node (aa3) at (-1.3+1.8,0.75) {};
\node (cc3) at (-1.4+1.8, 0.3) {};
\node (bb3) at (-1.2+1.8,0.3) {};
\node (dd3) at (-1.3+1.8, -0.15) {};

\fill (aa3) circle (0.05);
\filldraw[gray,thin] (cc3) circle (0.05);
\filldraw[gray,thin] (bb3) circle (0.05);
\fill (dd3) circle (0.05);

\node (ss3) at (-1.3+1.8,0.3) {};
\draw (ss3) circle (0.18);


\draw (aa3)--(-1.3+1.8, 0.5);
\draw (-1.3+1.8,0.1)--(dd3);

\end{tikzpicture}

\caption{$R \lhd P$ and corresponding $P \btl Q$, \hskip 5mm
$P \preceq Q$ but not $P \btl Q$ }
\label{Fig-TT}
\end{center}
\end{figure}

For a braid cone $C = k(P)$ we may let $\subc(P)$ be the preorders
$R \leq P$ and $\supc(P)$ be the preorders $R \leq Q$.
The Galois connection \eqref{eq:cones-galois} thus induces a Galois
connection
\begin{align} \label{eq:braid-galois} \supc(P) & \bihom{\Phi}{\Gamma}
                                                 \subc(P) \\
  Q & \mapsto P \wedge Q^{\op}  \notag\\
  P \vee R^{\op} & \mapsfrom R. \notag
  \end{align}  
  By Lemma \ref{lem:cones-face}, the image of $\Gamma$ is  the set of
  $Q$ such that $k(Q)$ is a face of $k(P)$. This description is given
  in \cite[Section 3]{PRW2009}, and $Q$ is called a contraction of $P$.
  We shall write $P \btl Q$ for such $Q$.
  The image of $\Phi$ are the $R$ which may be written as $P \wedge Q^{\op}$
  for some superpreorder $P \leq Q$. We write $R \wtl P$.
  Such $R$ go by several names in the literature:
  \begin{itemize}
  \item Partitions of $P$ which are connected and compatible,
    \cite{Sta1986},
  \item Positive subposets in \cite[Sec.3.4]{AA2023} (they only consider
    posets),
  \item Admissible in \cite{FFM2017}.
 \end{itemize}

\section{Submodular and modular functions}

\label{sec:submod}

The inequalities defining extended generalized permutahedra (EGP),
are derived from extended submodular functions. In fact there is a bijection
between these structures. Also an extended  submodular function induces a
preorder, which plays a significant role for the geometry of EGP's.
EGP's which are affine cones correspond to the subclass of modular functions.

\subsection{Submodular functions}
\label{subsec:pre-bf}

Denote $\hRR = \RR \cup \{\infty \}$. For $I$ a finite set, let
$\pow(I)$ be the power set, i.e. the set of all subsets of $I$. An
{\it extended Boolean function}
on $I$ is an arbitrary function $z : \pow(I) \to \hRR$ such
that $z(\emptyset) = 0$ and $z(I)< \infty$. Write $\bool(I)$ for the set of
extended Boolean functions
on $I$. For $S \sus I$ with $z(S) < \infty$,
the {\it restriction} of $z$ to $S$ is
\[ z_{|S} : \pow(S) \to \hRR, \, \text{  given by }
  z_{|S}(U) = z(U), \quad U \sus S.\]
The {\it corestriction} of $z$ is
\[z_{/S} : \pow(I \backslash S) \to \hRR, \, \text{ given by }
  z_{/S}(U) = z(S \cup U) - z(S), \quad U \sus I \backslash S. \]
If $S \sus T \sus I$ let $z_{T/S}$ be $(z_{|T})_{/S}$.

\medskip
Let $u \in \bool(S)$ and $v \in \bool(T)$ and $I = S \sqcup T$
the disjoint union.
Their product $u \cdot v \in \bool(I)$ is the function given by
\[ u \cdot v(E) = u(E \cap S) + v(E \cap T), \quad E \sus I. \]

\noindent The function $z \in \bool(I)$ is {\it submodular}
if for $S,T \sus I$:
\begin{equation} \label{eq:notions-submod}
  z (S) + z(T) \geq z(S \cup T) + z(S \cap T).
  \end{equation}
  Write $\smod(I)$ for the set of submodular functions on $I$.
  An equivalent description is that a Boolean function is submodular if and
  only if 
  given any $I \supseteq B \supseteq A$ and varying
  $S \sus I \backslash B$, then:
  \[ z(B \cup S) - z(A \cup S) \]
  is a weakly decreasing function as $S$ increases for the inclusion relation
  (with the convention that $\infty - \infty$ is any value).

\subsection{From submodular functions to
  topologies and preorders}  \label{subsec:prod-subpretop}
The following is a simple and significant observation.
Let $I$ be
finite and given a submodular function $z$. Let
\[\top(z) = \{ S \sus I \, |\, z(S) < \infty \}. \]
This is a topology on $I$. This is because it is closed under unions
and intersection due to the submodular inequality \eqref{eq:notions-submod}.
By the Alexandroff correspondence, see for instance
\cite[Sec.3]{F-order}, a finite topology on $I$ is equivalent to give
preorder on $I$: A set $S \sus I$ is an open set in the topology iff
$S$ is a down-set for the preorder.
So we have maps:
\[ \smod(I)\, \,  \substack{ \top \\ \pil \\ \pil \\ \pre} \, \, 
  \begin{cases} \Top(I) \\ \Pre(I) \end{cases} \]
which via the identification of $\Top(I)^{\op}$ and $\Pre(I)$ is
the same map. So $z(S)$ is finite if and only if $S$ is a down-set
in $\pre(z)$.

Conversely, there is a map 
sending a preorder $P$ to the submodular function
\[ \low_P : \pow(I) \pil \hat{\RR}, \quad \text{where } \low_P(S) =
  \begin{cases} 0, & S \text{ a down-set of } P \\
    \infty, & S \text{ not down-set of } P
  \end{cases}. \]
It is immediate that submodular functions $z : \pow(I) \pil \{ 0, \infty\}$ are
in bijection with preorders on $I$, \cite[Thm. 3.4.9]{AA2023}.
  
\subsection{Modular functions}The class of modular functions $z$ on $I$, i.e. those such that we have
equality in \eqref{eq:notions-submod}, is particularly nice.

A function $z : \pow(I) \pil \hRR$ is {\it modular} iff
for every $A, B \sus I$:
\begin{equation} \label{eq:mod-def}
  z(A) + z(B) = z(A \cap B) + z(A \cup B).
  \end{equation}
Let $P = \pre(z)$. 
For the modular function $z$, each bubble $C \in {\mathbf b}(P)$,
generates a down-set $D = \dai{P}C$. Let $E$ be the down-set
$(\dai{P}) C \backslash C$.
We get a function
\[ v : {\mathbf b}(P) \pil \RR, \quad v(C) = z_{D \backslash E}(C). \]

Given a preorder $P$ on $I$, let $\mod_P(I)$ be the set of modular functions
$z$ with $P = \pre(z)$. The above gives a map
{
\begin{equation} \label{eq:mod-bpr}
  \mod_P(I) \pil \RR^{\mathbf b(P)}, \quad z \mapsto v
\end{equation}
}

\begin{proposition} \label{pro:submod-mod}
The association \eqref{eq:mod-bpr} above 
is a bijection.
In particular if $P$ is a poset (which is equivalent to $\mathbf b(P)$
being the family of one-element subsets of $I$),
modular functions in $\mod_P(I)$ are in bijection with $\RR^I$.
{Modular} functions $z$, may thus be given by pairs $(P,v)$
where $P$ is a preorder and $v \in \RR^{\mathbf b(P)}$. For
a down-set $S$ of $P$:
\[ z(S) = \sum_{C \in {\mathbf b(S)}} v(C). \]
\end{proposition}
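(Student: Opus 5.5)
We define the inverse map $\RR^{\mathbf b(P)} \pil \mod_P(I)$ and check that both composites are the identity. Given $v \in \RR^{\mathbf b(P)}$, set $z(S) = \sum_{C \in \mathbf b(S)} v(C)$ for $S$ a down-set of $P$, where $\mathbf b(S)$ denotes the bubbles contained in $S$, and $z(S) = \infty$ otherwise. A down-set of a preorder is a union of bubbles. Hence for down-sets $A,B$ the bubbles of $A \cup B$ and of $A \cap B$ are the union and intersection of $\mathbf b(A)$ and $\mathbf b(B)$. The modular identity \eqref{eq:mod-def} then holds by inclusion--exclusion of the finite sums.

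When $A$ or $B$ fails to be a down-set, the left side of \eqref{eq:mod-def} is $\infty$. We must check that the right side is also $\infty$ in this case. This is the one place needing care: the convention in \eqref{eq:mod-def} has to be read so that it does not force $A \cap B$ or $A\cup B$ to be non-down-sets. We adopt the reading implicit in the paper's definition of submodularity with values in $\hRR$, namely that the identity is imposed when both sides are finite, or equivalently $\infty = \infty$ otherwise, with $\top(z)$ the down-sets. Under this reading $z(\emptyset)=0$, $z(I)$ is finite, and $\pre(z) = P$, so $z \in \mod_P(I)$.

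Next we show the composite $\RR^{\mathbf b(P)} \pil \mod_P(I) \pil \RR^{\mathbf b(P)}$ is the identity. For a bubble $C$, put $D = \dai{P} C$ and $E = D \backslash C$; both are down-sets, and $\mathbf b(D) = \mathbf b(E) \cup \{C\}$. The map \eqref{eq:mod-bpr} returns $z_{D/E}(C) = z(D) - z(E) = v(C)$. So it recovers $v$.

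The main step is the other composite. Given a modular $z$ with $\pre(z)=P$, we show $z(S) = \sum_{C \in \mathbf b(S)} v(C)$ for every down-set $S$, with $v$ as in \eqref{eq:mod-bpr}. We induct on the number of bubbles in $S$; for $S = \emptyset$ the claim is $z(\emptyset)=0$. Otherwise choose a bubble $C \sus S$ that is maximal in $S$, so that $S' = S \backslash C$ is a down-set. Let $D = \dai{P} C \sus S$ and $E = D \backslash C$. Then $D \cup S' = S$ and $D \cap S' = E$, and all four sets are down-sets, so their $z$-values are finite. Modularity gives $z(S) = z(S') + z(D) - z(E) = z(S') + v(C)$, and induction finishes the argument. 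This step uses only that finite values are taken on down-sets and that \eqref{eq:mod-def} holds there. Finally, the poset case follows because then bubbles are singletons, so $\mathbf b(P) \cong I$.
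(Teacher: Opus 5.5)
Your proof is correct, but it is organized differently from the paper's. The paper builds $z$ from $v$ by recursion on the size of a down-set $D$. For a principal down-set $D=\dai{P}C$ it puts $z(D)=v(C)+z(D\backslash C)$. For any other down-set it writes $D=A_1\cup B_1$ with strictly smaller down-sets and lets \eqref{eq:mod-def} dictate $z(D)$. It then checks, by an inclusion--exclusion expansion over $A_1\cap A_2$, $A_1\cap B_2$, etc., that the value does not depend on the decomposition. Existence, uniqueness and modularity all come out of that one recursion, while the closed formula $z(S)=\sum_{C\in\mathbf b(S)}v(C)$ is left implicit.

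You instead start from the closed formula. Modularity on down-sets is then immediate from $\mathbf b(A\cup B)=\mathbf b(A)\cup\mathbf b(B)$ and $\mathbf b(A\cap B)=\mathbf b(A)\cap\mathbf b(B)$. You get uniqueness by removing a maximal bubble $C$ of $S$ and applying modularity to the single pair $(\dai{P}C,\,S\backslash C)$, whose meet and join are $\dai{P}C\backslash C$ and $S$. This avoids the decomposition-independence computation entirely and actually proves the displayed formula. The paper's recursion, in exchange, makes visible that the values on principal (join-irreducible) down-sets force all the others.

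One sentence needs fixing. It is not true that the right side of \eqref{eq:mod-def} is $\infty$ whenever $A$ or $B$ fails to be a down-set, so ``equivalently $\infty=\infty$ otherwise'' is false. For $I=\{1,2\}$ with $1<2$, $A=\{2\}$, $B=\{1\}$, the left side is $\infty$ but $z(\emptyset)+z(I)$ is finite. Read literally over all pairs, \eqref{eq:mod-def} with $B=I\backslash A$ would force $z$ to be finite everywhere, i.e.\ $P$ discrete.

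The convention that makes the proposition true, and which the paper uses tacitly, is this: $z$ is submodular and equality holds whenever $z(A)$ and $z(B)$ are finite. Your $z$ meets it, since submodularity is automatic when one of $A,B$ is not a down-set. Your uniqueness step also uses exactly this much. With that sentence replaced, the proof is complete.
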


\begin{proof}
  Given a map $v : \mathbf b(P) \pil \RR$, we show there is a uniquely defined
  modular function $z : \pow(I) \pil \RR$ with $z(D)$ finite iff
  $D$ is a down-set of $I$, which gives $v$ by the correspondence
  above. Let $D$ be a down-set of $P$. We define inductively $z(D)$ by
  the size of $D$.
 Clearly minimal $D$'s are minimal bubbles, and 
 $z(D) = v(D)$.
 If $D$ is of the form $\dai{P} C$ for a bubble $C$, we define
 inductively
  \[ z(D) = v(C) + z(D \backslash C). \]
 
  Otherwise, we may
  write $D = A_1 \cup B_1$ for two down-sets strictly smaller than $D$.
  Since we must have
  \[ z(A_1 \cup B_1) + z(A_1 \cap B_1) = z(A_1) + z(B_1), \]
  this determines $z(D)$. But we must show this is independent of the
  decomposition. So let $D = A_2 \cup B_2$ be another decomposition.
  Then $z(D)$ is defined respectively as:
  \begin{equation} \label{eq:mod-zD}
    z(A_1) + z(B_1) - z(A_1 \cap B_1), \quad
    z(A_2) + z(B_2) - z(A_2 \cap B_2).
    \end{equation} 
  Expanding the first expression in \eqref{eq:mod-zD} by:
  \[ A_1 = (A_1 \cap A_2) \cup (A_1 \cap B_2), \quad
    B_1 = (B_1 \cap B_2) \cup (B_1 \cap A_2), \]
  we get
  \begin{align*}
\big ((z(A_1 \cap A_2) + z(A_1 \cap B_2) & -  z(A_1 \cap A_2 \cap B_2) \big )
+ \big ( z(B_1 \cap A_2) + z(B_1 \cap B_2) - z(B_1 \cap B_2 \cap A_2) \big ) \\
                  & - \big ( z(A_1 \cap B_1 \cap A_2) +
        z(A_1 \cap B_1 \cap B_2) - z(A_1 \cap A_2 \cap B_1 \cap B_2) \big ).
  \end{align*}
  Expanding the second expression of \eqref{eq:mod-zD}, it is seen, by symmetry,
  to give the same as above.
\end{proof}

\section{Extended generalized permutahedra}

\label{sec:egp}

By linear inequalities associated to submodular functions,
they define convex polyhedra. If 
the submodular function only takes finite values, these are polytopes:
the generalized
permutahedra. Allowing $\infty$ as value, we get extended generalized
permutahedra. We recall their definition and basic properties.
In particular, we give the submodular functions of faces
and tangent cones of extended generalized permutahedra,
Theorems \ref{thm:egp-face} and  \ref{thm:egp-cone}. To our knowledge these
have not appeared explicitly in the literature before.

We base our development here on Facts \ref{fact:egp-prodsum}, \ref{fact:egp-1S},
and \ref{fact:egp-braid} which are found in \cite{AA2023}. An original,
detailed and rich source is \cite{Fuji}.

\subsection{Definition and basic facts}
Again $I$ is a finite set, and $\RR I$ the vector space with
basis $I$, and we write $e_i$ for the basis element $i$.
The elements of $\RR I$ are thus sums $\sum a_i e_i$ where
$a_i \in \RR$. We may also write $(a_i)_{i \in I}$.

The \textsl{extended generalized permutahedron (EGP)} associated to a submodular
function $z:\pow(I)\to \hRR$ 
is the polyhedron $\egp(z)$ consisting of points $x = (x_i)_{i \in I} \in \RR I$
such that:
\begin{equation} \label{eq:egp-def}
  i. \,   \sum_{i\in I}x_i=z(I), \quad ii. \,
  \sum_{i\in A}x_i\le z(A) \hbox{ for each }
    A\sus I \hbox{ with } z(A)< \infty.
  \end{equation}

This is a convex polyhedron. It is a polytope, a
{\it generalized permutahedron},
if and only if $z:\pow(I)\to\mathbb R$, i.e. $z$ only takes finite values.
When $x \in \RR I$ and $A \sus I$,
we write for short $x_A = \sum_{i \in A} x_i$. 

\begin{example}
  Given a strictly decreasing sequence
  $\ell_1 > \ell_2 > \cdots  > \ell_n$ in $\RR_{\geq 0}$, for $S \sus I$ let 
  $z(S) = \sum_{i \leq |S|} \ell_i$. This is a submodular function,
  and the associated GP is the permutahedron, the convex hull of
  the vertices $(\ell_{\pi(1)}, \ldots, \ell_{\pi(n)})$ where $\pi$ runs
  through all permutations of $\{1,2, \ldots, n\}$. 
\end{example}

We recall basic facts on extended generalized permutahedra (EGP's).

\begin{fact} \label{fact:egp-prodsum}
  Let $I = S \sqcup T$.
  If $z \in \smod(I)$ decomposes as a product $z = u \cdot v$,
  where $u \in \smod(S)$ and
  $v \in \smod(T)$, then
  $\egperm(z)$ on $\RR I$ is the product
  $\egperm(u) \times \egperm(v) \sus \RR S \times \RR T$.
  This is \cite[Thm. 3.1.8]{AA2023}.
\end{fact}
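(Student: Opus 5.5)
The plan is to prove the two inclusions $\egperm(u) \times \egperm(v) \sus \egperm(z)$ and $\egperm(z) \sus \egperm(u)\times\egperm(v)$ directly from the defining inequalities \eqref{eq:egp-def}. Throughout we identify $\RR I = \RR S \times \RR T$ and write a point as $x = (x', x'')$ with $x' \in \RR S$, $x'' \in \RR T$. For $E \sus I$ we then have $x_E = x'_{E\cap S} + x''_{E \cap T}$. The product formula gives $z(E) = u(E\cap S) + v(E \cap T)$. Hence $z(E) < \infty$ if and only if both $u(E\cap S)$ and $v(E\cap T)$ are finite.

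For the first inclusion, take $x' \in \egperm(u)$ and $x'' \in \egperm(v)$.

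\begin{itemize}
\item[i.] The equality condition holds: $x_I = x'_S + x''_T = u(S) + v(T) = z(I)$.
\item[ii.] Let $E$ satisfy $z(E) < \infty$. Then $u(E\cap S)$ and $v(E \cap T)$ are finite, so the inequalities for $\egperm(u)$ and $\egperm(v)$ apply. Adding them gives $x_E \le u(E\cap S) + v(E\cap T) = z(E)$.
\end{itemize}

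Here we use $u(\emptyset) = v(\emptyset) = 0$, which covers the case where one of $E\cap S$, $E \cap T$ is empty.

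For the converse inclusion, take $x \in \egperm(z)$.

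\begin{itemize}
\item For $A \sus S$ with $u(A) < \infty$, set $E = A$. Then $z(A) = u(A) + v(\emptyset) = u(A)$, so $x'_A \le u(A)$. Similarly, for $B \sus T$ with $v(B)<\infty$ we get $x''_B \le v(B)$.
\item The equality constraints need a little more care. Take $E = S$: since $z(S) = u(S) < \infty$, we have $x'_S \le u(S)$. Likewise $x''_T \le v(T)$.
\item Combining these with $x'_S + x''_T = z(I) = u(S) + v(T)$ forces both inequalities to be equalities. So $x'_S = u(S)$ and $x''_T = v(T)$.
\end{itemize}

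Hence $x' \in \egperm(u)$ and $x'' \in \egperm(v)$.

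The only point requiring attention is this last step, namely getting the two separate equality constraints from the single constraint on $I$. It works because $S$ and $T$ are themselves sets with finite $z$-value, by the product formula and finiteness of $u(S)$ and $v(T)$. Everything else is bookkeeping with the product formula.
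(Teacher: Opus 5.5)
Your proof is correct and complete. The paper itself gives no argument for this fact: it simply quotes \cite[Thm.~3.1.8]{AA2023}, the same theorem it also invokes for the face decomposition in Fact~\ref{fact:egp-1S}. Your direct verification from the inequalities \eqref{eq:egp-def} is therefore a self-contained alternative, not a reconstruction of the paper's route.

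What your approach buys is transparency about what is actually needed. You use only $u(\emptyset)=v(\emptyset)=0$ and the finiteness of $u(S)$ and $v(T)$, both built into the definition of extended Boolean functions. Submodularity is never used, so the product decomposition in fact holds for the polyhedron cut out by \eqref{eq:egp-def} for arbitrary extended Boolean functions $u,v$.

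The one non-formal step is exactly the one you isolate. Because $z(S)=u(S)$ and $z(T)=v(T)$ are finite, the single equation $x_I=z(I)$ together with the inequalities $x'_S\le u(S)$ and $x''_T\le v(T)$ splits into the two separate equations defining $\egperm(u)$ and $\egperm(v)$.

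The citation, by contrast, places the fact within Aguiar--Ardila's general treatment of extended generalized permutahedra. There it comes together with the description of the face of $\egperm(z)$ maximized by $1_S$ as $\egperm(z_{|S})\times\egperm(z_{/S})$.
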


\begin{fact} \label{fact:egp-1S}
  When $z(S) < \infty$, the equation $x_S = z(S)$ defines a face of
  $\egperm(z)$, which is the product
  \[ \egperm(z_{|S}) \times \egperm (z_{/S}). \]
  It is the face of $\egperm(z)$ associated to the direction $1_S$.
 
  When  $z(S) = \infty$, then $1_S$ does not take a maximum
  value on $\egperm(z)$ (it takes arbitrarily large values).
  This is Proposition 1.4.2 and Theorem 3.1.8 in \cite{AA2023},
  and the last statement of Theorem 3.1.6 of \cite{AA2023}
  (which uses Section 3 in \cite{Fuji}).
\end{fact}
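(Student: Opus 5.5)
The plan is to prove the statement directly from the defining inequalities \eqref{eq:egp-def} and submodularity. I will not rely on the literature for this. Write $T = I \backslash S$ throughout.

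\emph{Step 1: $\egperm(z)$ is nonempty.} This is needed both for the face to be nonempty and for the unboundedness claim. I argue by induction on $|I|$; for $|I|=1$ the polyhedron is a point. If $\top(z)=\{\emptyset, I\}$, then $\egperm(z)$ is the whole hyperplane $x_I = z(I)$. Otherwise pick a proper nonempty $S$ with $z(S)<\infty$. By induction, $\egperm(z_{|S})$ and $\egperm(z_{/S})$ are nonempty. Note that $z_{|S}$ and $z_{/S}$ are again submodular, with finite values on their full ground sets. Step 2 then shows that their product lies in $\egperm(z)$. So the induction should be organised so that Step 2 is proved first: its containment argument does not use nonemptiness.

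\emph{Step 2: the face equals the product.} Since $z(S)<\infty$, the inequality $x_S \le z(S)$ is one of the defining inequalities. Hence $1_S$ is bounded above by $z(S)$ on $\egperm(z)$, and $\{x_S = z(S)\}$ is the face $\egperm(z)_{1_S}$, provided the maximum is attained.

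Let $x$ lie on this set and write $x=(y,w)$ with $y\in \RR S$ and $w \in \RR T$.
\begin{itemize}
\item For $A \sus S$ we have $y_A \le z(A)$ and $y_S = z(S)$, so $y \in \egperm(z_{|S})$.
\item For $U \sus T$ we have $w_U = x_{S\cup U} - x_S \le z(S\cup U) - z(S) = z_{/S}(U)$, with equality for $U=T$. So $w \in \egperm(z_{/S})$.
\end{itemize}
Conversely, take $y \in \egperm(z_{|S})$, $w\in\egperm(z_{/S})$ and $A \sus I$ with $z(A)<\infty$. Then
\[ y_{A\cap S} + w_{A\cap T} \le z(A\cap S) + z(S\cup A) - z(S) \le z(A), \]
where the last inequality is submodularity applied to $A$ and $S$, using $S \cup (A\cap T) = S\cup A$. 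The finite values of $z(A)$ and $z(S)$ force $z(A\cap S)$ and $z(A\cup S)$ to be finite as well, since $\top(z)$ is a topology. Also $x_I = z(S) + z(I)-z(S) = z(I)$. Thus the product lies in $\egperm(z)$ and on $\{x_S=z(S)\}$. Combined with Step 1, this shows the maximum $z(S)$ is attained, so the face is exactly the product.

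\emph{Step 3: unboundedness when $z(S)=\infty$.} Here $S$ is not a down-set of $\pre(z)$. So there are $j \in S$ and $i \notin S$ with $i \le j$, meaning that every finite-valued (open) set containing $j$ also contains $i$. Take $x \in \egperm(z)$ and move along $x + t(e_j - e_i)$ for $t \ge 0$.
\begin{itemize}
\item For any $A$ with $z(A)<\infty$: if $j \in A$ then $i\in A$ and $x_A$ is unchanged; otherwise $x_A$ does not increase.
\item The sum $x_I$ is unchanged.
\end{itemize}
So the whole ray stays in $\egperm(z)$, while $x_S$ increases by $t$, which makes $1_S$ unbounded.

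The main obstacle is the existence of a point attaining $x_S=z(S)$, that is, nonemptiness with infinite values allowed. The only delicate part of Step 1 is arranging the induction so that it is not circular.
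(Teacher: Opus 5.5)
Your proof is correct. It differs from the paper, which does not prove this Fact at all but imports it from Aguiar--Ardila \cite{AA2023}. There, three ingredients are combined:
\begin{itemize}
\item the splitting of the $1_S$-face into a product is obtained geometrically for generalized permutahedra defined by the normal-fan condition, before submodular functions enter (Prop.~1.4.2);
\item the factors are then identified as $\egperm(z_{|S})$ and $\egperm(z_{/S})$ (Thm.~3.1.8);
\item the unboundedness of $1_S$ when $z(S)=\infty$ is part of the submodular/EGP correspondence of Thm.~3.1.6, which rests on Fujishige's theory of base polyhedra \cite{Fuji}.
\end{itemize}

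You work directly from the inequalities \eqref{eq:egp-def}. The reverse containment in Step 2 is exactly the submodular inequality for $A$ and $S$. The finiteness of $z(A\cap S)$ and $z(A\cup S)$ that it needs is correctly supplied by $\top(z)$ being a topology. Nonemptiness comes from your induction on $|I|$ through restriction and contraction. Unwound, this builds a point tight along a chain of down-sets of $\pre(z)$, a greedy-type construction that replaces the appeal to \cite{Fuji}. Unboundedness uses the explicit ray $e_j-e_i$ with $i\le j$ in $\pre(z)$. This is one of the generators of the recession cone $\egperm(\low_{\pre(z)})$ (Propositions~\ref{pro:egp-dim} and~\ref{pro:egp-dualcone}).

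Your route gives a short, self-contained, non-circular proof. As a by-product, Steps 2 and 3 together show $z(S)=\sup_{x\in\egperm(z)}x_S$ for every $S$, with the value $\infty$ included. That is precisely the injectivity of $z\mapsto\egperm(z)$.

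The cited route gives the conceptual point that the product splitting of $1_S$-faces is a property of the braid-fan normal fan itself. It also comes packaged with the full bijection $\smod(I)\leftrightarrow\egp(I)$, surjectivity included, on which the paper relies throughout.
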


\begin{fact}  \label{fact:egp-braid}
 The normal fan of an extended generalized permutahedron $\egperm(z)$ is
  a coarsening of the induced subfan of the braid arrangement on
  a braid cone. This is Theorem 3.1.6 parts (1) and (2) in \cite{AA2023}.
\end{fact}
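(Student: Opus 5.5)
The plan is to determine directly, for each direction $y\in\RR^I$, whether $y$ is bounded on $\egperm(z)$ and, if so, what its maximal face is. Concretely, I aim to show two things. First, the set of bounded directions is exactly the braid cone $k(P)$ with $P=\pre(z)$. Second, the face $\egperm(z)_y$ depends only on the total preorder $L$ on $I$ induced by the order of the coordinates of $y$. Together these say that every normal cone is a union of cones $k(L)\cap k(P)$ of the braid fan restricted to $k(P)$, which is the claimed coarsening. Throughout I use that $\egperm(z)$ is non-empty, as in \cite{AA2023}.

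\emph{Step 1: unbounded directions.} Suppose $y\notin k(P)$. Then there are $i\leq_P j$ with $y_i<y_j$. I claim $e_j-e_i$ lies in the recession cone of $\egperm(z)$. The equation $x_I=z(I)$ is unchanged by adding this vector. Every inequality $x_A\leq z(A)$ with $z(A)<\infty$ has $A$ a down-set of $P$, so $j\in A$ forces $i\in A$. Hence $x_A$ either stays the same or decreases along $e_j-e_i$. Since $y(e_j-e_i)>0$, the direction $y$ is unbounded on $\egperm(z)$.

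\emph{Step 2: bounded directions and their faces.} Let $y\in k(P)$, and let $t_1>t_2>\cdots>t_r$ be the distinct values of $y$. Put $S_k=\{i \mid y_i\geq t_k\}$, so that $S_1\subsetneq\cdots\subsetneq S_r=I$ and each $S_k$ is a down-set of $P$, because $y\in k(P)$. Then
\[ y=\sum_{k<r}(t_k-t_{k+1})\,\fone_{S_k}+t_r\fone_I, \]
with all coefficients $t_k-t_{k+1}$ positive for $k<r$.
\begin{itemize}
\item On $\egperm(z)$ this gives the upper bound $y(x)\leq\sum_{k<r}(t_k-t_{k+1})z(S_k)+t_r z(I)$.
\item Equality holds precisely on the set $F_L$ where $x_{S_k}=z(S_k)$ for all $k$.
\item It remains to see that $F_L$ is non-empty. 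Apply Fact~\ref{fact:egp-1S} iteratively. The face $x_{S_{r-1}}=z(S_{r-1})$ is $\egperm(z_{|S_{r-1}})\times\egperm(z_{/S_{r-1}})$. Inside the first factor, repeat with $S_{r-2}$, and so on. This exhibits $F_L$ as the product $\prod_k\egperm(z_{S_k/S_{k-1}})$, which is non-empty.
\end{itemize}
So $y$ is bounded, and its maximal face is $F_L$. This face depends only on the chain $(S_k)$, that is, on the total preorder $L$ whose relative interior contains $y$, and not on the actual values $t_k$.

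\emph{Step 3: conclusion.} By Step 2 the map $y\mapsto\egperm(z)_y$ is constant on the relative interior of each braid cone $k(L)\sus k(P)$. Therefore each normal cone $c(F)$ is the closure of a union of such relative interiors, hence a union of cones of the braid fan lying in $k(P)$. Also, every cone $k(L)\sus k(P)$ lies in a single normal cone, namely that of $F_L$. By Steps 1 and 2 the union of all the normal cones is exactly $k(P)$. This is the coarsening statement.

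The main obstacle is the non-emptiness of $F_L$ in Step 2, that is, showing that all the tight constraints can be achieved simultaneously. I rely on iterating Fact~\ref{fact:egp-1S} for this. The point to check there is that $z_{|S_{k+1}}$ and its corestrictions remain submodular, with finite value on the whole ground set, so that the Fact applies at each stage.
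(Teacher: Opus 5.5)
Your proof is correct. It necessarily differs from the paper, which gives no argument here and simply imports the fact from \cite{AA2023}, Theorem 3.1.6 (which, as the paper notes elsewhere, draws on \cite{Fuji}).

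You argue directly. The recession vectors $e_j-e_i$ for $i\leq_P j$ make every direction outside $k(\pre(z))$ unbounded. For $y\in k(\pre(z))$, the level-set flag $S_1\subsetneq\cdots\subsetneq S_r$ writes $y$ as a positive combination of the bounded directions $\fone_{S_k}$ plus a multiple of $\fone_I$. So $\egperm(z)_y$ is cut out by the equations $x_{S_k}=z(S_k)$ and depends only on $L$. This gives more than the coarsening statement. You also obtain the support $k(\pre(z))$ and the explicit face $\prod_k\egperm(z_{S_k/S_{k-1}})$, which the paper reaches only later (Proposition~\ref{pro:egp-dim}(c), Theorem~\ref{thm:egp-face}).

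The obstacle you flag can be removed, and with it the appeal to \cite{AA2023} for non-emptiness.
First, the product decomposition in Fact~\ref{fact:egp-1S} is a two-line consequence of submodularity. For $a\in\egperm(z_{|S})$, $b\in\egperm(z_{/S})$ and $z(A)<\infty$ one has $a_{A\cap S}+b_{A\setminus S}\le z(A\cap S)+z(A\cup S)-z(S)\le z(A)$.
Second, refine $(S_k)$ to a maximal chain of down-sets of $P$. The successive differences are then bubbles $B$, and on each bubble the minor is finite only on $\emptyset$ and $B$. So each factor is the non-empty affine hyperplane $x_B=\mathrm{const}$ in $\RR B$. Their product is a non-empty subset of $F_L$, and of $\egperm(z)$, which is also what your Step 1 needs.

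One wording point. ``$k(L)$ lies in a single normal cone'' is inaccurate, since $k(L)\sus c(G)$ for every face $G\sus F_L$. What the coarsening requires, and what you actually prove, is $k(L)\sus c(F_L)$.
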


\ignore{    
\begin{proposition}    
  The braid cone of Fact \ref{fact:egp-braid} is $k(\pre(z))$
  (where $\pre(z)$ is the preorder defined in Subsection
  \ref{subsec:prod-subpretop}).
\end{proposition}

\begin{proof} By Proposition \ref{pro:braid-K} the braid cone
  is $k(P)$ for some preorder $P$. 
  Down-sets $S$ of $P$ corresponds to the bounded directions $1_S$
  of $z$. By Fact \ref{fact:egp-1S} these are those $S$ such that
  $z(S) < \infty$, and so $P = \pre(z)$.
\end{proof}
}

\begin{proposition} \label{pro:egp-normalcone}
  A face $F$ of $\egperm(z)$ has normal cone
  $k(P)$ for some preorder $P$. For $S \sus I$, the following are equivalent:

  \begin{enumerate}[a)]
\item $F$ is contained in the face where $1_S$ attains its maximum. 
\item $S$ is a down-set of $P$,
\item The equation $x_S = z(S)$ holds for every $x \in F$,
\end{enumerate}

In particular, the $S$ such that $x_S = z(S)$ holds on $F$
are precisely the down-sets of $P$. 
\end{proposition}

\begin{proof} That the normal cone of $F$ is a braid cone $k(P)$ follows
  by  Fact \ref{fact:egp-braid} and Proposition \ref{pro:braid-K}.
  That a) and b) are equivalent has already been observed in
  Section \ref{sec:braid}, before Proposition \ref{pro:braid-cone}.

  We know that $x_S \leq z(S)$ on $\egperm(z)$, and equality may be attained
  by Fact \ref{fact:egp-1S}.
  So the maximum value of $1_S$ is $z(S)$. This shows a) implies c).
Conversely, if $x_S = z(S)$ on $F$, then $1_S$ attains maximal value
  on $F$, so c) implies a).
\end{proof}

We get the following from \cite[Prop. 3.5.(2)]{PRW2009}:

\begin{corollary} \label{cor:egp-PQ}
  Let $F,G$ be faces of $\egperm(z)$, corresponding to
  normal fans $k(P)$ and $k(Q)$. Then $F \sus G$ iff $P \btl Q$, i.e.
  $Q$ is a contraction of $P$.
\end{corollary}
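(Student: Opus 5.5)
The plan is to reduce the corollary to the general fact that, for a polyhedron, inclusion of faces is the same as reverse inclusion of normal cones, together with the face-of-a-braid-cone description already available. First I would recall, from the subsection on the normal fan, that if $F \sus G$ are faces of $\egperm(z)$ then $c(G)$ is a face of $c(F)$. Conversely, suppose $c(G)$ is a face of $c(F)$. Take $w$ in the relative interior of $c(G)$; then $w$ lies in $c(F)$, so $w$ attains its maximum on all of $F$, and $P_w = G$. Hence $F \sus G$. So $F \sus G$ holds iff $k(Q)$ is a face of $k(P)$.

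Next I translate ``$k(Q)$ is a face of $k(P)$'' into preorders. By Lemma \ref{lem:cones-face}, the faces of the cone $C = k(P)$ are exactly the images $C \cap (-E)$ of $\Gamma$ for supercones $E \supseteq C$. By the discussion after \eqref{eq:braid-galois}, when this is specialised to braid cones the image of $\Gamma$ is the set of contractions $Q$ of $P$, that is, $P \btl Q$. The one point to verify is that every face of a braid cone is again a braid cone, so that it lies in the image of the preorder Galois connection. This follows from the proof of Lemma \ref{lem:cones-face}: a face is $C \cap (-H^+)$ with $H^+$ a supporting half-space, and a supporting $H^+$ can be replaced by $k(P \vee R^{\op})$-type braid data. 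Alternatively, note that a face of a union of braid-fan cones is again a union of braid-fan cones, and then apply Proposition \ref{pro:braid-K}.

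I also need the monotonicity direction to be consistent. Since $Q \succeq P$, we have $k(Q) \sus k(P)$, which matches $c(G) \sus c(F)$ for $F \sus G$. The bijection $P \mapsto k(P)$ of Proposition \ref{pro:braid-cone} then gives the exact correspondence between faces $F \sus G$ and contractions $P \btl Q$.

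The main obstacle is the claim that a face of a braid cone is itself a braid cone, equivalently that the abstract Galois correspondence restricts correctly. I would settle it via Proposition \ref{pro:braid-K}: every cone of the normal fan of $\egperm(z)$ is a union of braid-fan cones by Fact \ref{fact:egp-braid}, so $c(G)$ is directly of the form $k(Q)$. The statement then becomes exactly that $k(Q)$ is a face of $k(P)$, which is the contraction condition of \cite[Prop. 3.5(2)]{PRW2009}.
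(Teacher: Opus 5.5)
Your proposal is correct and follows the same route as the paper. Both arguments reduce $F \sus G$ to the normal cone $k(Q)$ being a face of $k(P)$, and then identify this with $P \btl Q$ via the Galois-connection description in Subsection \ref{subsec:braidcone} (equivalently \cite[Prop.~3.5(2)]{PRW2009}). Your extra worry about faces of braid cones being braid cones is harmless but unnecessary, since $c(G) = k(Q)$ is given by hypothesis, as you note yourself at the end.
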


\begin{proof} That $F$ is a face of $G$, corresponds to $k(Q)$ being
  a face of $k(P)$. By Subsection \ref{subsec:braidcone}, this
  means $P \btl Q$.
\end{proof}


\newcommand{\cba}{
  \begin{tikzpicture}[scale=0.7, vertices/.style={draw, fill=black, circle, inner sep=1.5pt}]

\node [vertices] (31) at (-0.5,0) {};
\node [vertices] (32) at (-0.5,0.5) {};
\node [vertices] (33) at (-0.5,1) {};

\node[anchor = east] at (31) {$c$ \hskip 4mm {}};
\node[anchor = east] at (32) {$b$ \hskip 4mm {}};
\node[anchor = east] at (33) {$a$ \hskip 4mm {}};
\draw (31)--(32);
\draw (31)--(33); 

\end{tikzpicture}
}


\newcommand{\cab}{
  \begin{tikzpicture}[scale=0.7, vertices/.style={draw, fill=black, circle, inner sep=1.5pt}]

\node [vertices] (31) at (-0.5,0) {};
\node [vertices] (32) at (-0.5,0.5) {};
\node [vertices] (33) at (-0.5,1) {};

\node[anchor = east] at (31) {$c$ \hskip 4mm {}};
\node[anchor = east] at (32) {$a$ \hskip 4mm {}};
\node[anchor = east] at (33) {$b$ \hskip 4mm {}};
\draw (31)--(32);
\draw (31)--(33); 

\end{tikzpicture}
}


\newcommand{\acb}{
  \begin{tikzpicture}[scale=0.7, vertices/.style={draw, fill=black, circle, inner sep=1.5pt}]

\node [vertices] (31) at (-0.5,0) {};
\node [vertices] (32) at (-0.5,0.5) {};
\node [vertices] (33) at (-0.5,1) {};

\node[anchor = east] at (31) {$a$ \hskip 4mm {}};
\node[anchor = east] at (32) {$c$ \hskip 4mm {}};
\node[anchor = east] at (33) {$b$ \hskip 4mm {}};
\draw (31)--(32);
\draw (31)--(33); 

\end{tikzpicture}
}


\newcommand{\abc}{
  \begin{tikzpicture}[scale=0.7, vertices/.style={draw, fill=black, circle, inner sep=1.5pt}]

\node [vertices] (31) at (-0.5,0) {};
\node [vertices] (32) at (-0.5,0.5) {};
\node [vertices] (33) at (-0.5,1) {};

\node[anchor = east] at (31) {$a$ \hskip 4mm {}};
\node[anchor = east] at (32) {$b$ \hskip 4mm {}};
\node[anchor = east] at (33) {$c$ \hskip 4mm {}};
\draw (31)--(32);
\draw (31)--(33); 

\end{tikzpicture}
}

\newcommand{\bac}{
  \begin{tikzpicture}[scale=0.7, vertices/.style={draw, fill=black, circle, inner sep=1.5pt}]

\node [vertices] (31) at (-0.5,0) {};
\node [vertices] (32) at (-0.5,0.5) {};
\node [vertices] (33) at (-0.5,1) {};

\node[anchor = east] at (31) {$b$ \hskip 4mm {}};
\node[anchor = east] at (32) {$a$ \hskip 4mm {}};
\node[anchor = east] at (33) {$c$ \hskip 4mm {}};
\draw (31)--(32);
\draw (31)--(33); 

\end{tikzpicture}
}


\newcommand{\bca}{
  \begin{tikzpicture}[scale=0.7, vertices/.style={draw, fill=black, circle, inner sep=1.5pt}]

\node [vertices] (31) at (-0.5,0) {};
\node [vertices] (32) at (-0.5,0.5) {};
\node [vertices] (33) at (-0.5,1) {};

\node[anchor = east] at (31) {$b$ \hskip 4mm {}};
\node[anchor = east] at (32) {$c$ \hskip 4mm {}};
\node[anchor = east] at (33) {$a$ \hskip 4mm {}};
\draw (31)--(32);
\draw (31)--(33); 

\end{tikzpicture}
}


\newcommand{\csab}{
  \begin{tikzpicture}[scale=0.7, vertices/.style={draw, fill=black, circle, inner sep=1pt}]

\node [vertices] (31) at (-0.5,-0.2) {};
\node [vertices, gray] (32) at (-0.33,0.7) {};
\node [vertices, gray] (33) at (-0.67,0.7) {};

\node[anchor = east] at (31) {$c$ \hskip 0.1pt {}};
\node[anchor = west] at (32) {$\, b${}};
\node[anchor = east] at (33) {$a\, $};

\node (ss3) at (-0.5, 0.7) {};
\draw (ss3) ellipse (0.63 and 0.44);


\draw (31)--(-0.45, 0.25);

\end{tikzpicture}
}

\newcommand{\asbc}{
  \begin{tikzpicture}[scale=0.7, vertices/.style={draw, fill=black, circle, inner sep=1pt}]

\node [vertices] (31) at (-0.5,-0.2) {};
\node [vertices, gray] (32) at (-0.33,0.7) {};
\node [vertices, gray] (33) at (-0.67,0.7) {};

\node[anchor = east] at (31) {$a$ \hskip 0.1pt {}};
\node[anchor = west] at (32) {$\, c${}};
\node[anchor = east] at (33) {$b\, $};

\node (ss3) at (-0.5, 0.7) {};
\draw (ss3) ellipse (0.63 and 0.44);


\draw (31)--(-0.44, 0.32);

\end{tikzpicture}
}

\newcommand{\bsac}{
  \begin{tikzpicture}[scale=0.7, vertices/.style={draw, fill=black, circle, inner sep=1pt}]

\node [vertices] (31) at (-0.5,-0.2) {};
\node [vertices, gray] (32) at (-0.33,0.7) {};
\node [vertices, gray] (33) at (-0.67,0.7) {};

\node[anchor = east] at (31) {$b$ \hskip 0.1pt {}};
\node[anchor = west] at (32) {$\, c${}};
\node[anchor = east] at (33) {$a\, $};

\node (ss3) at (-0.5, 0.7) {};
\draw (ss3) ellipse (0.63 and 0.44);

\draw (31)--(-0.57, 0.27);

\end{tikzpicture}
}


\newcommand{\absc}{
  \begin{tikzpicture}[scale=0.7, vertices/.style={draw, fill=black, circle, inner sep=1pt}]

\node [vertices, gray] (31) at (-0.33,-0.2) {};
\node [vertices, gray] (32) at (-0.67,-0.2) {};
\node [vertices] (33) at (-0.5,0.7) {};

\node[anchor = west] at (31) {$\, b$ };
\node[anchor = east] at (32) {$a \,$ };
\node[anchor = east] at (33) {$c$ \hskip 0.1pt {}};

\node (ss3) at (-0.5, -0.2) {};
\draw (ss3) ellipse (0.63 and 0.44);


\draw (33)--(-0.57, 0.25);

\end{tikzpicture}
}

\newcommand{\bcsa}{
  \begin{tikzpicture}[scale=0.7, vertices/.style={draw, fill=black, circle, inner sep=1pt}]

\node [vertices, gray] (31) at (-0.33,-0.2) {};
\node [vertices, gray] (32) at (-0.67,-0.2) {};
\node [vertices] (33) at (-0.5,0.7) {};

\node[anchor = west] at (31) {$\, c$ };
\node[anchor = east] at (32) {$b \,$ };
\node[anchor = east] at (33) {$a$ \hskip 0.1pt {}};

\node (ss3) at (-0.5, -0.2) {};
\draw (ss3) ellipse (0.63 and 0.44);


\draw (33)--(-0.57, 0.20);

\end{tikzpicture}
}

\newcommand{\acsb}{
  \begin{tikzpicture}[scale=0.7, vertices/.style={draw, fill=black, circle, inner sep=1pt}]

\node [vertices, gray] (31) at (-0.33,-0.2) {};
\node [vertices, gray] (32) at (-0.67,-0.2) {};
\node [vertices] (33) at (-0.5,0.7) {};

\node[anchor = west] at (31) {$\, c$ };
\node[anchor = east] at (32) {$a \,$ };
\node[anchor = east] at (33) {$b$ \hskip 0.1pt {}};

\node (ss3) at (-0.5, -0.2) {};
\draw (ss3) ellipse (0.63 and 0.44);


\draw (33)--(-0.42, 0.20);

\end{tikzpicture}
}


\newcommand{\abcss}{
  \begin{tikzpicture}[scale=0.7, vertices/.style={draw, fill=black, circle, inner sep=1pt}]

\node [vertices, gray] (31) at (-0.33,-0.2) {};
\node [vertices, gray] (32) at (-0.67,-0.2) {};
\node [vertices, gray] (33) at (-0.5, 0.03) {};

\node[anchor = west] at (31) {$\, c$ };
\node[anchor = east] at (32) {$b \,$ };
\node[anchor = south east] at (33) {$a$};

\node (ss3) at (-0.5, -0.13) {};
\draw (ss3) ellipse (0.60 and 0.60);


\end{tikzpicture}
}


\begin{figure}
\begin{center}
\begin{tikzpicture}[inner sep=1pt, scale=0.7]

\coordinate (a) at (0,0) ; 
\coordinate (b) at (3, 1.71);  
\coordinate (c) at (3, 5.15);  
\coordinate (d) at (0, 6.86);  
\coordinate (e) at (-3, 5.15);  
\coordinate (f) at (-3, 1.71);  

            \draw[black, thick, fill=tol1, fill opacity = 0.1] 
            (a) -- (b)  -- (c) -- (d) -- (e) -- (f) -- (a);
            \draw[black, fill] (a)     circle (2pt);
            \draw[black, fill] (b)     circle (2pt);
            \draw[black, fill] (c)     circle (2pt);
            \draw[black, fill] (d)     circle (2pt);
            \draw[black, fill] (e)     circle (2pt);
            \draw[black, fill] (f)     circle (2pt);

\node[] at (a) [anchor = north east] {$(1,2,3)$};
\node[] at (b) [anchor = north west] {$(2,1,3)$};
\node[] at (c) [anchor = south west] {$(3,1,2)$};
\node[] at (d) [anchor = south west] {$(3,2,1)$};
\node[] at (e) [anchor = south east] {$(2,3,1)$};
\node[] at (f) [anchor = north east] {$(1,3,2)$};
      
\end{tikzpicture}
\quad   
\begin{tikzpicture}[inner sep=1pt, scale=0.7]

\coordinate (a) at (0,0) ; 
\coordinate (b) at (3, 1.71);  
\coordinate (c) at (3, 5.15);  
\coordinate (d) at (0, 6.86);  
\coordinate (e) at (-3, 5.15);  
\coordinate (f) at (-3, 1.71);  

\coordinate (x) at (-1.5,0.855) ; 
\coordinate (y) at (1.5, 0.855);  
\coordinate (z) at (3, 3.43);  
\coordinate (t) at (1.5, 6.005);  
\coordinate (u) at (-1.5, 6.005);  
\coordinate (v) at (-3, 3.43);  

\coordinate (s) at (0, 3.43);

            \draw[black, thick, fill=tol1, fill opacity = 0.1] 
            (a) -- (b)  -- (c) -- (d) -- (e) -- (f) -- (a);
            \draw[black, fill] (a)     circle (2pt);
            \draw[black, fill] (b)     circle (2pt);
            \draw[black, fill] (c)     circle (2pt);
            \draw[black, fill] (d)     circle (2pt);
            \draw[black, fill] (e)     circle (2pt);
            \draw[black, fill] (f)     circle (2pt);

\node[] at (a) [anchor = north east] {$\cba$ \hskip 5mm {}};
\node[] at (b) [anchor = north west] {$\cab$ \hskip 5mm {}};
\node[] at (c) [anchor = south west] {$\acb$ \hskip 5mm {}};
\node[] at (d) [anchor = south west] {$\abc$ \hskip 5mm {}};
\node[] at (e) [anchor = south east] {$\bac$ \hskip 5mm {}};
\node[] at (f) [anchor = north east] {$\bca$ \hskip 5mm {}};
      
\node[] at (x) [anchor = north east] {$\bcsa$};
\node[] at (y) [anchor = north west] {$\csab$};
\node[] at (z) [anchor = west] {$\acsb$};
\node[] at (t) [anchor = south west] {$\asbc$};
\node[] at (u) [anchor = south east] {$\absc$};
\node[] at (v) [anchor = east] {$\bsac$};

\node[] at (s) {$\abcss$};
\end{tikzpicture}
          \end{center}
          \caption{The two-dimensional permutahedron and the preorders
          associated to its faces}
        \label{fig:toperm-conform}
    \end{figure}    

\begin{example} \label{ex:bij-pent} Figure
  \ref{fig:toperm-conform} shows the two-dimensional permutahedron
  and the preorders associated to its faces. 
  Figure \ref{fig:togenperm-conform} shows the
  generalized permutahedron, a pentagon, of the submodular function defined on
  $\pow(\{a,b,c\})$ by 
  \[ z(a) = z(b) = z(c) = 3, \quad z(ab) = z(bc) = 5, \quad
    z(ac) = z(abc) = 6, \]
  and the corresponding preorders.
\end{example}


\newcommand{\recba}{
  \begin{tikzpicture}[scale=0.7, vertices/.style={draw, fill=black, circle, inner sep=1.5pt}]

\node [vertices] (31) at (-0.5,0) {};
\node [vertices] (32) at (-0.5,0.5) {};
\node [vertices] (33) at (-0.5,1) {};

\node[anchor = east] at (31) {$c$ \hskip 4mm {}};
\node[anchor = east] at (32) {$b$ \hskip 4mm {}};
\node[anchor = east] at (33) {$a$ \hskip 4mm {}};
\draw (31)--(32);
\draw (31)--(33); 

\end{tikzpicture}
}


\renewcommand{\cab}{
  \begin{tikzpicture}[scale=0.7, vertices/.style={draw, fill=black, circle, inner sep=1.5pt}]

\node [vertices] (31) at (-0.5,0) {};
\node [vertices] (32) at (-0.5,0.5) {};
\node [vertices] (33) at (-0.5,1) {};

\node[anchor = east] at (31) {$c$ \hskip 4mm {}};
\node[anchor = east] at (32) {$a$ \hskip 4mm {}};
\node[anchor = east] at (33) {$b$ \hskip 4mm {}};
\draw (31)--(32);
\draw (31)--(33); 

\end{tikzpicture}
}


\renewcommand{\acb}{
  \begin{tikzpicture}[scale=0.7, vertices/.style={draw, fill=black, circle, inner sep=1.5pt}]

\node [vertices] (31) at (-0.5,0) {};
\node [vertices] (32) at (-0.5,0.5) {};
\node [vertices] (33) at (-0.5,1) {};

\node[anchor = east] at (31) {$a$ \hskip 4mm {}};
\node[anchor = east] at (32) {$c$ \hskip 4mm {}};
\node[anchor = east] at (33) {$b$ \hskip 4mm {}};
\draw (31)--(32);
\draw (31)--(33); 

\end{tikzpicture}
}


\renewcommand{\abc}{
  \begin{tikzpicture}[scale=0.7, vertices/.style={draw, fill=black, circle, inner sep=1.5pt}]

\node [vertices] (31) at (-0.5,0) {};
\node [vertices] (32) at (-0.5,0.5) {};
\node [vertices] (33) at (-0.5,1) {};

\node[anchor = east] at (31) {$a$ \hskip 4mm {}};
\node[anchor = east] at (32) {$b$ \hskip 4mm {}};
\node[anchor = east] at (33) {$c$ \hskip 4mm {}};
\draw (31)--(32);
\draw (31)--(33); 

\end{tikzpicture}
}

\renewcommand{\bac}{
  \begin{tikzpicture}[scale=0.7, vertices/.style={draw, fill=black, circle, inner sep=1.5pt}]

\node [vertices] (31) at (-0.5,0) {};
\node [vertices] (32) at (-0.5,0.5) {};
\node [vertices] (33) at (-0.5,1) {};

\node[anchor = east] at (31) {$b$ \hskip 4mm {}};
\node[anchor = east] at (32) {$a$ \hskip 4mm {}};
\node[anchor = east] at (33) {$c$ \hskip 4mm {}};
\draw (31)--(32);
\draw (31)--(33); 

\end{tikzpicture}
}


\renewcommand{\bca}{
  \begin{tikzpicture}[scale=0.7, vertices/.style={draw, fill=black, circle, inner sep=1.5pt}]

\node [vertices] (31) at (-0.5,0) {};
\node [vertices] (32) at (-0.5,0.5) {};
\node [vertices] (33) at (-0.5,1) {};

\node[anchor = east] at (31) {$b$ \hskip 4mm {}};
\node[anchor = east] at (32) {$c$ \hskip 4mm {}};
\node[anchor = east] at (33) {$a$ \hskip 4mm {}};
\draw (31)--(32);
\draw (31)--(33); 

\end{tikzpicture}
}


\renewcommand{\csab}{
  \begin{tikzpicture}[scale=0.7, vertices/.style={draw, fill=black, circle, inner sep=1.5pt}]

\node [vertices] (31) at (-0.5,-0.2) {};
\node [vertices, gray] (32) at (-0.33,0.7) {};
\node [vertices, gray] (33) at (-0.67,0.7) {};

\node[anchor = east] at (31) {$c$ \hskip 0.1pt {}};
\node[anchor = west] at (32) {$\, b${}};
\node[anchor = east] at (33) {$a\, $};

\node (ss3) at (-0.5, 0.7) {};
\draw (ss3) ellipse (0.63 and 0.44);


\draw (31)--(-0.45, 0.25);

\end{tikzpicture}
}

\renewcommand{\asbc}{
  \begin{tikzpicture}[scale=0.7, vertices/.style={draw, fill=black, circle, inner sep=1.5pt}]

\node [vertices] (31) at (-0.5,-0.2) {};
\node [vertices, gray] (32) at (-0.33,0.7) {};
\node [vertices, gray] (33) at (-0.67,0.7) {};

\node[anchor = east] at (31) {$a$ \hskip 0.1pt {}};
\node[anchor = west] at (32) {$\, c${}};
\node[anchor = east] at (33) {$b\, $};

\node (ss3) at (-0.5, 0.7) {};
\draw (ss3) ellipse (0.63 and 0.44);


\draw (31)--(-0.44, 0.32);

\end{tikzpicture}
}

\renewcommand{\bsac}{
  \begin{tikzpicture}[scale=0.7, vertices/.style={draw, fill=black, circle, inner sep=1.5pt}]

\node [vertices] (31) at (-0.5,-0.2) {};
\node [vertices, gray] (32) at (-0.33,0.7) {};
\node [vertices, gray] (33) at (-0.67,0.7) {};

\node[anchor = east] at (31) {$b$ \hskip 0.1pt {}};
\node[anchor = west] at (32) {$\, c${}};
\node[anchor = east] at (33) {$a\, $};

\node (ss3) at (-0.5, 0.7) {};
\draw (ss3) ellipse (0.63 and 0.44);

\draw (31)--(-0.57, 0.27);

\end{tikzpicture}
}


\renewcommand{\absc}{
  \begin{tikzpicture}[scale=0.7, vertices/.style={draw, fill=black, circle, inner sep=1.5pt}]

\node [vertices, gray] (31) at (-0.33,-0.2) {};
\node [vertices, gray] (32) at (-0.67,-0.2) {};
\node [vertices] (33) at (-0.5,0.7) {};

\node[anchor = west] at (31) {$\, b$ };
\node[anchor = east] at (32) {$a \,$ };
\node[anchor = east] at (33) {$c$ \hskip 0.1pt {}};

\node (ss3) at (-0.5, -0.2) {};
\draw (ss3) ellipse (0.63 and 0.44);


\draw (33)--(-0.57, 0.25);

\end{tikzpicture}
}

\renewcommand{\bcsa}{
  \begin{tikzpicture}[scale=0.7, vertices/.style={draw, fill=black, circle, inner sep=1.5pt}]

\node [vertices, gray] (31) at (-0.33,-0.2) {};
\node [vertices, gray] (32) at (-0.67,-0.2) {};
\node [vertices] (33) at (-0.5,0.7) {};

\node[anchor = west] at (31) {$\, c$ };
\node[anchor = east] at (32) {$b \,$ };
\node[anchor = east] at (33) {$a$ \hskip 0.1pt {}};

\node (ss3) at (-0.5, -0.2) {};
\draw (ss3) ellipse (0.63 and 0.44);


\draw (33)--(-0.57, 0.20);

\end{tikzpicture}
}

\renewcommand{\acsb}{
  \begin{tikzpicture}[scale=0.7, vertices/.style={draw, fill=black, circle, inner sep=1.5pt}]

\node [vertices, gray] (31) at (-0.33,-0.2) {};
\node [vertices, gray] (32) at (-0.67,-0.2) {};
\node [vertices] (33) at (-0.5,0.7) {};

\node[anchor = west] at (31) {$\, c$ };
\node[anchor = east] at (32) {$a \,$ };
\node[anchor = east] at (33) {$b$ \hskip 0.1pt {}};

\node (ss3) at (-0.5, -0.2) {};
\draw (ss3) ellipse (0.63 and 0.44);


\draw (33)--(-0.42, 0.20);

\end{tikzpicture}
}


\renewcommand{\abcss}{
  \begin{tikzpicture}[scale=0.7, vertices/.style={draw, fill=black, circle, inner sep=1.5pt}]

\node [vertices, gray] (31) at (-0.33,-0.2) {};
\node [vertices, gray] (32) at (-0.67,-0.2) {};
\node [vertices, gray] (33) at (-0.5, 0.03) {};

\node[anchor = west] at (31) {$\, c$ };
\node[anchor = east] at (32) {$b \,$ };
\node[anchor = south east] at (33) {$a$};

\node (ss3) at (-0.5, -0.13) {};
\draw (ss3) ellipse (0.60 and 0.60);


\end{tikzpicture}
}

\newcommand{\acAb}{
  \begin{tikzpicture}[scale=0.7, vertices/.style={draw, fill=black, circle, inner sep=1.5pt}]

\node [vertices] (31) at (-0.8,0) {};
\node [vertices] (32) at (-0.2,0) {};
\node [vertices] (33) at (-0.5,0.7) {};

\node[anchor = east] at (31) {$a\,$ };
\node[anchor = west] at (32) {$\, c$};
\node[anchor = east] at (33) {$b \hskip 1.5mm $};
\draw (31)--(33);
\draw (32)--(33); 

\end{tikzpicture}
}


\begin{figure}
\begin{center}
\begin{tikzpicture}[inner sep=1pt, scale=0.6]

\coordinate (a) at (0,0) ; 
\coordinate (b) at (3, 1.71);  
\coordinate (c) at (3, 5.15);  
\coordinate (bc) at (6, 3.43);
\coordinate (d) at (0, 6.86);  
\coordinate (e) at (-3, 5.15);  
\coordinate (f) at (-3, 1.71);  

            \draw[black, thick, fill=tol1, fill opacity = 0.1] 
            (a) -- (bc) -- (d) -- (e) -- (f) -- (a);
            \draw[black, dotted] 
            (b) -- (c);
            \draw[black, fill] (a)     circle (2pt);
            \draw[black, fill] (bc)     circle (2pt);
            \draw[black, fill] (d)     circle (2pt);
            \draw[black, fill] (e)     circle (2pt);
            \draw[black, fill] (f)     circle (2pt);

\node[] at (a) [anchor = north east] {$(1,2,3)$};
\node[] at (bc) [anchor = north west] {$(3,0,3)$};
\node[] at (d) [anchor = south west] {$(3,2,1)$};
\node[] at (e) [anchor = south east] {$(2,3,1)$};
\node[] at (f) [anchor = north east] {$(1,3,2)$};
      
\end{tikzpicture}
\quad
\begin{tikzpicture}[inner sep=1pt, scale=0.6]

\coordinate (a) at (0,0) ; 
\coordinate (b) at (3, 1.71);  
\coordinate (c) at (3, 5.15);  
\coordinate (bc) at (6, 3.83);  
\coordinate (d) at (0, 6.86);  
\coordinate (e) at (-3, 5.15);  
\coordinate (f) at (-3, 1.71);  

\coordinate (x) at (-1.5,0.855) ; 
\coordinate (y) at (3, 1.71);  
\coordinate (t) at (3, 5.15);  
\coordinate (u) at (-1.5, 6.005);  
\coordinate (v) at (-3, 3.43);  

\coordinate (s) at (0, 3.43);

            \draw[black, thick, fill=tol1, fill opacity = 0.1] 
            (a) -- (bc) -- (d) -- (e) -- (f) -- (a);
            \draw[black, fill] (a)     circle (2pt);
            \draw[black, fill] (bc)     circle (2pt);
            \draw[black, fill] (d)     circle (2pt);
            \draw[black, fill] (e)     circle (2pt);
            \draw[black, fill] (f)     circle (2pt);

\node[] at (a) [anchor = north east] {$\cba$ \hskip 5mm {}};
\node[] at (bc) [anchor = north west] {$\acAb$ \hskip 5mm {}};
\node[] at (d) [anchor = south west] {$\abc$ \hskip 5mm {}};
\node[] at (e) [anchor = south east] {$\bac$ \hskip 5mm {}};
\node[] at (f) [anchor = north east] {$\bca$ \hskip 5mm {}};
      
\node[] at (x) [anchor = north east] {$\bcsa$};
\node[] at (y) [anchor = north west] {$\csab$};
\node[] at (t) [anchor = south west] {$\asbc$ };
\node[] at (u) [anchor = south east] {$\absc$};
\node[] at (v) [anchor = east] {$\bsac$};

\node[] at (s) {$\abcss$};
\end{tikzpicture}
          \end{center}
         \caption{The pentagon and the preorders associated to its faces}
        \label{fig:togenperm-conform}
    \end{figure}    

Here are some further basic facts:
\begin{proposition} \label{pro:egp-dim}
  Let $z : \pow(I) \pil \hRR$ be a submodular function.
  \begin{itemize}
  \item[a.] For a preorder $P$, $\egperm(\low_P)$ is a cone, and $k(P)$ is
    its dual cone.
  \item[b.] The recession cone of $\egperm(z)$ is the extended generalized
    permutahedron $\egperm(\low_{\pre(z)})$.
  \item[c.] The dual cone of a recession cone is the cone of bounded
    directions, which here is the braid cone $k(\pre(z))$ of Fact
    \ref{fact:egp-braid}.
  \item[d.] The minimal faces of  $\egperm(z)$ have dimension
    $|I| - |{\mathbf b}(\pre(z))|$.
  \end{itemize}
\end{proposition}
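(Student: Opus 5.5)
We will prove the four parts in the order a, b, c, d, since each feeds the next.

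For part a, observe that $\low_P$ takes only the values $0$ and $\infty$, so by \eqref{eq:egp-def} the polyhedron $\egperm(\low_P)$ is cut out by $x_I = 0$ together with $x_S \leq 0$ for every down-set $S$ of $P$. All right-hand sides are zero, hence it is a cone. Its dual cone is generated by the directions $\one_S$ ($S$ a down-set) and $\pm\one_I$, as recalled in Subsection \ref{subsec:duals}. We claim this generated cone equals $k(P)$. One inclusion holds because each $\one_S$ with $S$ a down-set lies in $k(P)$ (observation before Proposition \ref{pro:braid-cone}), and $\pm \one_I$ lies in the lineality space $k(P^\disc)$. For the converse, take $y \in k(P)$, i.e.\ an order-preserving $P^{\op} \pil \RR$. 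Its distinct values $c_1 > c_2 > \cdots > c_r$ determine down-sets $S_k = \{ i \,|\, y_i \geq c_k\}$ of $P$. Then
\[ y = \sum_{k<r} (c_k - c_{k+1}) \one_{S_k} + c_r \one_I , \]
which is a nonnegative combination of the $\one_{S_k}$ plus a multiple of $\one_I$.

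For part b, apply \cite[Prop.1.12a]{Zi}: the recession cone is obtained by replacing every right-hand side $b_i$ in \eqref{eq:egp-def} by $0$. This gives $x_I = 0$ and $x_A \leq 0$ for $z(A) < \infty$. Since $z(A)<\infty$ exactly when $A$ is a down-set of $\pre(z)$ (Subsection \ref{subsec:prod-subpretop}), these are precisely the inequalities defining $\egperm(\low_{\pre(z)})$.

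Part c then follows from a and b: the dual of the recession cone is $k(\pre(z))$. That this dual is the cone of bounded directions is the statement from \cite{Zi} recalled in the subsection on the normal fan, namely that the union of the normal cones is dual to the recession cone.

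For part d, the minimal faces of a polyhedron have dimension $\dim V$ minus the dimension of the maximal normal cones. By Fact \ref{fact:egp-braid} and part c, the normal fan covers $k(\pre(z))$, so its maximal cones are full-dimensional inside $k(\pre(z))$. By Proposition \ref{pro:braid-kp}, $k(\pre(z))$ has dimension $|\mathbf b(\pre(z))|$, which gives the formula $|I| - |\mathbf b(\pre(z))|$.

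The main obstacle is bookkeeping in this last step. The maximal cones of the normal fan have dimension $\dim V - e$, where $e$ is the dimension of the minimal faces, and the common lineality space of these cones is dual to the span of $\egperm(z)$. One must check that the relevant ambient dimension is $|I|$ in $\RR^I$, not $|I|-1$ from the hyperplane $x_I = z(I)$. This check works because the lineality space of $k(\pre(z))$ contains $\RR\fone$, which accounts for the missing dimension. We therefore compute directly: maximal normal cones have dimension $|\mathbf b(\pre(z))|$ in $\RR^I$, so minimal faces have dimension $|I| - |\mathbf b(\pre(z))|$.
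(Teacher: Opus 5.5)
Your proof is correct and takes the same route as the paper, part by part. Parts a and b use the same defining inequalities for $\egperm(\low_P)$ and for the recession cone via \cite[Prop.1.12a]{Zi}, c is deduced from a and b, and d comes from the dimension $|\mathbf b(\pre(z))|$ of $k(\pre(z))$ given by Proposition \ref{pro:braid-kp}. The only additions are your explicit decomposition $y=\sum_{k<r}(c_k-c_{k+1})\one_{S_k}+c_r\one_I$ for the equality in part a, which the paper simply asserts, and the $|I|$ versus $|I|-1$ discussion in d, which is harmless since $\dim F+\dim c(F)=\dim \RR I$ holds for any face of any polyhedron.
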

  
  \begin{proof}
 a. When $z = \low_P$, then $\egperm(z)$ is defined by
  $x_I = 0$ and $x_S\leq 0$ for down-sets $S$ of $P$. The dual cone
  is then spanned by the directions $1_S$ and the linear space generated
  by $1_I$. This is precisely $k(P) = \Hom(P^\op, \RR)$.

  b. $\egperm(z)$ is defined by $x_S \leq z(S)$ for $S$ down-sets of $\pre(z)$,
  together with $x_I = z(I)$. The recession cone is then defined
  by $x_S \leq 0$ for all down-sets $S$ of $\pre(z)$, together with $x_I = 0$.

  c. is immediate from a. and b.


  d. The maximal dimensions of the normal cones of faces are
  $|{\mathbf b}(\pre(z))|$,
  by Proposition \ref{pro:braid-kp}.
\end{proof}

\subsection{Faces}

For a preorder $P$ and down-set $A$, denote by $P_A$ the restriction
of $P$ to $A$, and by $P_{/A}$ the restriction to $I \backslash A$. 

\begin{proposition}
  Let $F$ be a face of $\Pi$ with normal cone $k(P)$.
  Let $A$ be a down-set of $P$, so
  \[F \sus \Pi_A \times \Pi_{/A} = \egperm(z_{|A}) \times \egperm(z_{/A}). \]
  Then $F = F_A \times F_{/A}$ with $k(P_A)$ the normal cone of $F_A$,
  and $k(P_{/A})$ the normal cone of $F_{/A}$.
\end{proposition}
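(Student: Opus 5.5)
Here $\Pi=\egperm(z)$. We first show the product decomposition of $F$, and then identify the normal cones of the factors.

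\emph{The product decomposition.} By Proposition \ref{pro:egp-normalcone}, $A$ being a down-set of $P$ means $x_A=z(A)$ on $F$. So $F$ lies in the face $\Pi_A\times\Pi_{/A}$ of Fact \ref{fact:egp-1S}. A face of $\Pi$ contained in a face $G$ of $\Pi$ is a face of $G$, and the faces of a product of polyhedra are exactly the products of faces. Hence $F=F_A\times F_{/A}$ with $F_A$ a face of $\egperm(z_{|A})$ and $F_{/A}$ a face of $\egperm(z_{/A})$. Both are EGP's, so their normal cones (taken in $\RR^A$ and $\RR^{I\backslash A}$) are braid cones $k(P')$ and $k(P'')$ for preorders $P'$ on $A$ and $P''$ on $I\backslash A$. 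It remains to show $P'=P_A$ and $P''=P_{/A}$.

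\emph{Identifying $P'$ via down-sets.} A preorder is determined by its down-sets (the Alexandroff correspondence), so it suffices to compare down-sets. By Proposition \ref{pro:egp-normalcone} applied to $F_A$ in $\egperm(z_{|A})$, the down-sets of $P'$ are the $S\sus A$ with $x_S=z(S)$ on $F_A$. Since $F=F_A\times F_{/A}$ and $x_S$ only involves coordinates in $A$, this is the same as $x_S=z(S)$ on $F$. That in turn holds iff $S$ is a down-set of $P$. Because $A$ is itself a down-set of $P$, the down-sets of $P$ contained in $A$ are exactly the down-sets of the restriction $P_A$. Hence $P'=P_A$.

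\emph{Identifying $P''$.} For $U\sus I\backslash A$, the down-sets of $P''$ are the $U$ with $x_U=z_{/A}(U)=z(A\cup U)-z(A)$ on $F_{/A}$. Since $x_A=z(A)$ on $F$, this is equivalent to $x_{A\cup U}=z(A\cup U)$ on $F$, which holds iff $A\cup U$ is a down-set of $P$. Because $A$ is a down-set, $A\cup U$ is a down-set of $P$ iff $U$ is a down-set of the restriction of $P$ to $I\backslash A$. Indeed, if $u\in U$ and $v\le_P u$ with $v\notin A$, then $v\in U$ is required in both cases, and elements of $A$ cause no condition. Hence $P''=P_{/A}$.

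\emph{Expected obstacle.} The main point needing care is that the face $F_A$ of $\egperm(z_{|A})$ really has the same tight sets $S\sus A$ as $F$ has, i.e.\ that Proposition \ref{pro:egp-normalcone} applies within the factor with $z_{|A}$ and $z_{/A}$. This uses the identification of the face in Fact \ref{fact:egp-1S} as an honest product in $\RR A\times\RR(I\backslash A)$. Also, when $z(S)=\infty$ both sides fail automatically, since such $S$ are not down-sets of any normal-cone preorder.
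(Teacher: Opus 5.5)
Your proof is correct and follows essentially the same route as the paper. Both identify the down-sets of the normal-cone preorders of $F_A$ and $F_{/A}$ with the down-sets $S \subseteq A$, respectively $A \cup U$ with $U \subseteq I \backslash A$, of $P$, using tightness of $x_S = z(S)$ on $F$ (equivalently, $1_S$ lying in the normal cone). You also spell out points the paper leaves implicit: why $F$ splits as $F_A \times F_{/A}$, that the normal cones of the factors are braid cones (hence determined by their preorders, which are determined by their down-sets), and the $z(S)=\infty$ cases.
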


\begin{proof}
  Let $S \sus A$ be such that $1_S$ considered on $\Pi_A$ has
  maximal value on $F_A$. This value is $z_{|A}(S) = z(S)$.  
  Then also $1_S$ has maximal value $z(S)$ on $\Pi$.
  For $x \in F$ then  $(x_u)_{u \in A} \in F_A$.
  The value of $1_S$ on $x$ is then $x_S = z(S)$, the maximal value.
  Hence $1_S$ is in the normal cone of $F$, and so 
  $S$ is a down-set of $P_A$. 
  Conversely, any down-set $S$ of $P_A$ is also a down-set of $P$,
  and so gives a bounded direction $1_S$ for $F$. Thus the normal cone
  of $F_A$ is $k(P_A)$.
  
  Given $T \sus I \backslash A$ such that $1_T$ considered on
  $\Pi_{/A}$ has maximal value on $F_{/A}$. This value is $z_{/A}(T) =
  z(A \cup T) - z(A)$. The maximal value of $1_{A \cup T}$ on
$\Pi$ is $z(A \cup T)$. 
For $x \in F$ then $x_A = z(A)$ and $(x_u)_{u \in I \backslash A}$ is in $F_{/A}$.
Hence the value of $1_T$ on this point is $z(A \cup T) - z(A)$.
The value of $1_{A \cup T}$ on $x$ is then $z(A \cup T)$, the maximal value. 
  Then $1_{A \cup T}$ is in the normal cone of $F$ and so
    is a down-set of $P$. Whence $T$ is a down-set of $P_{/A}$.
    Conversely, given a down-set $A \cup T$ of $P$, so $1_{A \cup T}$
    is a bounded direction for $F$, then $1_T$ is a bounded direction for
    $F_{/A}$. In conclusion, $k(P_{/A})$ is the normal cone of $F_{/A}$.
    \end{proof}

\begin{lemma} \label{lem:egp-PAB}
  Let the face $F$ have normal cone $k(P)$, and suppose $P$ is a disjoint
  union of preorders $P = P_A \sqcup P_B$ with $I = A \sqcup B$.
  Then $z_{|A}$ and $z_{/B}$ are equal.
\end{lemma}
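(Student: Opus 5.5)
The plan is to show that the two polyhedra $\egperm(z_{|A})$ and $\egperm(z_{/B})$ in $\RR A$ coincide. Equality of the submodular functions then follows from the bijection between submodular functions and extended generalized permutahedra recalled at the start of Section \ref{sec:egp}. Both functions are defined on $\pow(A)$, with $z_{/B}(U) = z(B \cup U) - z(B)$.

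First, since $P = P_A \sqcup P_B$ with no relations between $A$ and $B$, both $A$ and $B$ are down-sets of $P$, and so is $I = A \cup B$. By Proposition \ref{pro:egp-normalcone}, every $x \in F$ satisfies $x_A = z(A)$ and $x_B = z(B)$; in particular both values are finite. Adding these and using \eqref{eq:egp-def}(i) gives
\[ z(A) + z(B) = x_A + x_B = x_I = z(I). \]
Consequently $z_{/A}(B) = z(I) - z(A) = z(B)$ and $z_{/B}(A) = z(A)$.

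Next, consider the face $G$ of $\Pi = \egperm(z)$ cut out by both equations $x_A = z(A)$ and $x_B = z(B)$; it contains $F$, so it is nonempty. By Fact \ref{fact:egp-1S}, the face $\{x_A = z(A)\}$ equals $\egperm(z_{|A}) \times \egperm(z_{/A})$ in $\RR A \times \RR B$. On the second factor, equation (i) already forces $x_B = z_{/A}(B) = z(B)$. Hence the extra condition $x_B = z(B)$ cuts nothing away, and
\[ G = \egperm(z_{|A}) \times \egperm(z_{/A}). \]
Symmetrically, starting from the face $\{x_B = z(B)\} = \egperm(z_{|B}) \times \egperm(z_{/B})$ and using $z_{/B}(A) = z(A)$, we get
\[ G = \egperm(z_{/B}) \times \egperm(z_{|B}). \]
Projecting $G$ onto $\RR A$ in the two descriptions gives $\egperm(z_{|A}) = \egperm(z_{/B})$, since both factors are nonempty.

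The main point needing care is this last step. One must check that the bijection $z \mapsto \egperm(z)$ is injective on $\smod(A)$, so that equal polyhedra force $z_{|A} = z_{/B}$ including the $\infty$ values. This is the bijection from \cite{AA2023} that the paper invokes: $z(S)$ is recovered as the maximum of $1_S$ (Fact \ref{fact:egp-1S}), and $z(S) = \infty$ exactly when $1_S$ is unbounded. So $z$ is determined by its polyhedron, and the lemma follows. One should also verify that $z_{|A}$ and $z_{/B}$ are genuine extended submodular functions, i.e. that $z(A)$ and $z(B)$ are finite; this was shown in the first step.
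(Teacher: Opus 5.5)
Your proposal is correct and is essentially the paper's argument. You derive $z(A)+z(B)=z(I)$ from a point of $F$, conclude that the faces $(\Pi)_{1_A}$ and $(\Pi)_{1_B}$ coincide, and compare the $\RR A$-factors of their two product decompositions from Fact \ref{fact:egp-1S}. You are slightly more explicit than the paper about the nonemptiness needed for the projection and about the injectivity of $z \mapsto \egperm(z)$, which turns $\egperm(z_{|A}) = \egperm(z_{/B})$ into $z_{|A} = z_{/B}$.
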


\begin{proof}
  We have
  \[ (\Pi)_{1_A} = \Pi_A \times \Pi_{/A}, \quad (\Pi)_{1_B} = \Pi_{/B}
    \times \Pi_B, \]
  and both contain $F$. Then for $x \in F$ we have
  \[ x_A = z(A), \quad x_B = z(B) \]
  and so
  \[z(I) = x_I = x_A + x_B = z(A) + z(B). \]
 Then, if $x \in P$ and $1_A$ takes maximal value $x_A = z(A)$, then
 since $x_B \leq z(B)$ and $x_B + x_A = x_I = z(I)$, we get $x_B = z(B)$,
 and so also $1_B$ takes maximum value on $x$. Thus
 $(\Pi)_{1_A} = (\Pi)_{1_B}$ and so
 \[ \egperm(z_{|A}) = \Pi_A = \Pi_{/B} = \egperm(z_{/B}). \]
\end{proof}

\begin{proposition} Let the face $F$ have normal cone $k(P)$ and 
  let $A \sus B \sus I$. Then $z_{B/A} := (z_{|B})_{/A}$ depends
  only on the set difference $C = B/A$ and not on $B$ and $A$.
Thus we shall write $z_C := z_{B/A}$. 
\end{proposition}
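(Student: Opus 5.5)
The plan is to reduce the statement to a comparison between two nested pairs of down-sets, and then squeeze the two differences between a submodular inequality in each direction. Throughout, $A \sus B$ are down-sets of $P$ (so that $z(A), z(B) < \infty$ by Proposition \ref{pro:egp-normalcone}), and for $U \sus C = B \backslash A$ we have
\[ z_{B/A}(U) = z(A \cup U) - z(A). \]
So we must show that for another pair of down-sets $A' \sus B'$ with $B' \backslash A' = C$, we have $z(A\cup U) - z(A) = z(A' \cup U) - z(A')$ for every $U \sus C$.

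First I reduce to the nested case. Since $C$ is disjoint from both $A$ and $A'$, we have $B \cap B' = (A \cap A') \cup C$, and $A\cap A'$, $B \cap B'$ are again down-sets of $P$. Thus it suffices to compare each pair $(A,B)$ and $(A',B')$ with $(A\cap A', B\cap B')$. So we may assume $A' \sus A$ and $B' \sus B$, with $B = A \cup B'$ and $A \cap B' = A'$.

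Next I use the face $F$ to get a modular relation among the four down-sets. For $x \in F$, Proposition \ref{pro:egp-normalcone} gives $x_S = z(S)$ for every down-set $S$ of $P$. Applying this to $A, A', B, B'$ and using $x_B + x_{A'} = x_A + x_{B'}$ (inclusion–exclusion for $B = A \cup B'$, $A' = A \cap B'$), we get
\[ z(B) - z(B') = z(A) - z(A'). \]

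Finally, two submodular inequalities \eqref{eq:notions-submod} finish the argument. Applied to $A'\cup U$ and $A$ (union $A\cup U$, intersection $A'$), submodularity gives
\[ z(A \cup U) - z(A) \leq z(A' \cup U) - z(A'). \]
Applied to $A \cup U$ and $B'$, the union is $B$ and the intersection is $A' \cup U$, since $U \sus C \sus B'$ and $A \cap B' = A'$. Submodularity then gives
\[ z(A \cup U) - z(A' \cup U) \geq z(B) - z(B') = z(A) - z(A'), \]
which is the reverse inequality. Hence equality holds, including the cases with value $\infty$, where both sides are infinite simultaneously.

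The only delicate step is the modular identity $z(B)+z(A')=z(A)+z(B')$. Submodularity alone gives only an inequality there, and it is exactly where the face $F$, with all these down-sets tight on it, enters.
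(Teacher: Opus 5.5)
Your proof is correct, but its key step differs from the paper's. The paper normalizes to a single canonical pair: $B$ is the smallest down-set containing $C$, so any other pair is $(A\cup S, B\cup S)$ with $S\cap C=\emptyset$. It notes that $P$ restricted to $S\sqcup C$ splits as $P_S\sqcup P_C$, and then applies Lemma~\ref{lem:egp-PAB} to the corestricted function on $S\sqcup C$. That lemma is geometric. In its own notation ($I=A\sqcup B$, $P=P_A\sqcup P_B$), it gets the single identity $z(I)=z(A)+z(B)$ from the face. It then deduces that the faces where $1_A$ and $1_B$ attain their maxima coincide, and reads off $z_{|A}=z_{/B}$ by comparing EGP factors through the bijection between submodular functions and EGP's. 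You instead use the face only to get modularity of $z$ on the lattice of down-sets of $P$, via Proposition~\ref{pro:egp-normalcone}. You then squeeze $z(A\cup U)-z(A)$ against $z(A'\cup U)-z(A')$ with two instances of the submodular inequality, working directly with function values. Your route has several advantages. It is more elementary and self-contained, and it needs no face of $\egperm(z_{B'/A})$ with normal cone $k(P_{B'\setminus A})$, which the paper obtains only implicitly by iterating the face-decomposition proposition. It does not rely on recovering $z$ from $\egperm(z)$, and it handles the infinite values explicitly. It also isolates exactly what is needed: $z$ submodular everywhere and modular on the down-sets of $P$. In exchange, the paper's route gives the identity a geometric meaning, namely that two faces of the polyhedron coincide, which is the viewpoint behind the inductive proof of Theorem~\ref{thm:egp-face}. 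Your reduction through $(A\cap A', B\cap B')$ needs two comparisons instead of one against the minimal pair, but that difference is cosmetic.
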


\begin{proof}
  Given the convex set $C$. Let $B$ be the smallest down-set containing $C$,
  and $A = B \backslash C$. Suppose also $C = B^\prime \backslash A^\prime$.
  Then $A^\prime = A \cup S$ and $B^\prime = B \cup S$, with $S \cap C
  = \emptyset$.
  Then $P_{B^\prime \backslash A}$ is the disjoint union of posets
  $P_S \sqcup P_C$, and using Lemma \ref{lem:egp-PAB}:
  \[ z_{B \backslash A} = (z_{B^\prime/A})_{|C} = (z_{B^\prime /A})_{/S} =
    z_{B \cup S \backslash A \cup S}. \] 
\end{proof}

The following describes the submodular functions corresponding
to the faces of $F$ of an extended general permutahedron $\egperm(z)$. 

\begin{theorem} \label{thm:egp-face}
  Let  $F$ be a face of $\egperm(z)$ with $k(P)$
  its normal cone. 
  Then $F$ is the extended generalized permutahedron with
  submodular function $z_P := \prod_{C \in {\mathbf b}(P)} z_C$.
\end{theorem}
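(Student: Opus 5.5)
We prove the theorem by induction along a chain of down-sets of $P$ that peels off one bubble at a time, using Fact \ref{fact:egp-1S} and the preceding Proposition on faces at each step. Since $P$ is a preorder on the finite set $I$, we may choose a filtration
\[ \emptyset = D_0 \subset D_1 \subset \cdots \subset D_m = I \]
of down-sets of $P$ such that each $C_j = D_j \backslash D_{j-1}$ is a bubble of $P$; this is possible by taking a linear extension of $P$ on its bubbles. By Proposition \ref{pro:egp-normalcone}, every $D_j$ satisfies $x_{D_j} = z(D_j)$ on $F$. In particular $z(D_j) < \infty$, so the restrictions and corestrictions below are defined.

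First, apply Fact \ref{fact:egp-1S} with $S = D_1$. It gives $F \sus \egperm(z_{|D_1}) \times \egperm(z_{/D_1})$. The preceding Proposition then splits $F = F_{D_1} \times F_{/D_1}$, where the two factors have normal cones $k(P_{D_1})$ and $k(P_{/D_1})$.

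Now iterate on the second factor $F_{/D_1}$, a face of $\egperm(z_{/D_1})$ with normal cone $k(P_{/D_1})$. Its smallest down-set $D_2 \backslash D_1$ is again a bubble. The corestriction of $z_{/D_1}$ works out to
\[ (z_{/D_1})_{|D_2\backslash D_1} = z_{D_2/D_1} = z_{C_2}, \qquad (z_{/D_1})_{/(D_2\backslash D_1)} = z_{/D_2}. \]
Both identities are routine from the definitions, since $z_{/D_1}(U) = z(D_1 \cup U) - z(D_1)$. Continuing through the filtration, we obtain $F = F_1 \times \cdots \times F_m$. Here $F_j$ is a face of $\egperm(z_{C_j})$ whose normal cone is $k(P_{C_j})$. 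Because $C_j$ is a bubble, $P_{C_j}$ is the total equivalence relation on $C_j$.

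It remains to see that each $F_j$ is all of $\egperm(z_{C_j})$. Since $P_{C_j}$ is the full equivalence relation, its only down-sets are $\emptyset$ and $C_j$. So $k(P_{C_j})$ is the line spanned by $1_{C_j}$, which is exactly the normal cone of the whole polyhedron $\egperm(z_{C_j})$ (on it $x_{C_j}$ is constant). A face whose normal cone equals the normal cone of the polyhedron itself is the polyhedron, so $F_j = \egperm(z_{C_j})$.

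Fact \ref{fact:egp-prodsum} then gives $F = \prod_j \egperm(z_{C_j}) = \egperm\bigl(\prod_j z_{C_j}\bigr)$. This function is $z_P$, and by the proposition just before the theorem it does not depend on the choice of filtration.

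The main obstacle is the bookkeeping in the iterative step. One must check that the normal cone of $F_{/D_1}$ inside $\egperm(z_{/D_1})$ is $k(P_{/D_1})$, so that the next bubble is a down-set there. The preceding Proposition supplies this. One must also check that the corestriction of a corestriction composes correctly, which is a direct computation from the definition of $z_{/S}$.
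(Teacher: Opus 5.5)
Your proof is correct and follows essentially the same route as the paper. You split $F$ along a down-set of $P$ using Fact \ref{fact:egp-1S} and the preceding Proposition on faces, recurse on the pieces, observe that a face whose normal cone is $k$ of a single bubble is the whole polyhedron, and assemble with Fact \ref{fact:egp-prodsum}. The only difference is that you peel off one bubble at a time along a filtration and verify the iterated corestriction identities and the single-bubble base case explicitly, whereas the paper splits at an arbitrary nontrivial down-set, inducts on both factors, and leaves those checks implicit.
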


\begin{proof}
  If $P = C$, the coarse topology, then $F = \egperm(z)$.
  Otherwise let $B$ be a non-trivial down-set of $P$. Then
  $F \sus \egperm(z_{|B}) \times \egperm(z_{/B})$, and we may write
  $F = F_B \times F_{/B}$.
  The normal cone of $F_B$ is $P_B$. By induction, the submodular function
  of $F_B$ is $\prod_{C \in {\mathbf b}(P_B)} z_C$. Correspondingly the submodular
  function of $F_{/B}$ is $\prod_{C \in {\mathbf b}(P_{/B})} z_C$.
\end{proof}

\subsection{Cones} \label{subsec:egp-cone}

\begin{proposition} \label{pro:mod-desc}
  A submodular function $z$ is modular iff $\egperm(z)$ is an affine cone.
  Then if $P = \pre(z)$, $\egperm(z)$ is a translation of
  $\egperm(\low_P)$. 
\end{proposition}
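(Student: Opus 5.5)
Both directions go through the recession cone, Proposition \ref{pro:egp-dim}(b).

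For the direction ``modular $\Rightarrow$ affine cone'', set $P = \pre(z)$. The system \eqref{eq:egp-def} defining $\egperm(z)$ consists of the equation $x_I = z(I)$ and the inequalities $x_S \leq z(S)$, with $S$ running over the down-sets of $P$. By Lemma \ref{lem:polyh-conea} it suffices to find a point $u \in \RR I$ with $u_S = z(S)$ for every down-set $S$. Proposition \ref{pro:submod-mod} gives this point: $z$ corresponds to a pair $(P,v)$ with $v \in \RR^{\mathbf b(P)}$. For each bubble $C$, choose any $u \in \RR C$ with $u_C = v(C)$, for instance all the mass on one element. Since a down-set is a disjoint union of bubbles, $u_S = \sum_{C \in \mathbf b(S)} v(C) = z(S)$ for every down-set $S$. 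Translating by $-u$ turns the system into $x_I = 0$ and $x_S \leq 0$ for down-sets $S$ of $P$, which is exactly $\egperm(\low_P)$. This shows $\egperm(z)$ is an affine cone and proves the second claim at the same time.

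For the converse, suppose $\egperm(z) = u + K$ for a cone $K$. The recession cone of $u+K$ is $K$, so Proposition \ref{pro:egp-dim}(b) gives $K = \egperm(\low_P)$ with $P = \pre(z)$. Fix a down-set $S$ of $P$. The maximum of $1_S$ on $\egperm(z)$ is $z(S)$ by Fact \ref{fact:egp-1S}. On the other hand, $1_S$ lies in $k(P) = K^\vee$ (Proposition \ref{pro:egp-dim}(a)), so its maximum on $K$ is $0$ and its maximum on $u+K$ is $u_S$. Hence $z(S) = u_S$ for all down-sets $S$. If $S$ is not a down-set, then $z(S) = \infty$.

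It remains to check modularity \eqref{eq:mod-def}. Down-sets are closed under union and intersection, and $S \mapsto u_S$ is additive, so $z(A)+z(B) = z(A\cap B)+z(A\cup B)$ whenever $A$ and $B$ are both down-sets. If one of $A, B$ is not a down-set, the left side is $\infty$. Here the right side may be finite, so equality in $\hRR$ fails literally.

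The main obstacle is therefore to fix the right meaning of ``modular'' for extended functions. It must agree with the convention implicit in Proposition \ref{pro:submod-mod}, namely that equality \eqref{eq:mod-def} is required only when $z(A), z(B) < \infty$, i.e. on the lattice $\top(z)$. I would state this interpretation explicitly and then check that the set-up of \eqref{eq:mod-bpr} uses it consistently.
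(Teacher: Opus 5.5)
Your proof is correct and follows essentially the same route as the paper: in both directions the key is a point $u$ with $u_S = z(S)$ for every down-set $S$ of $P=\pre(z)$. Additivity of $S\mapsto u_S$ turns this into modularity on $\top(z)$, and conversely Lemma~\ref{lem:polyh-conea} turns it into $\egperm(z) = u + \egperm(\low_P)$. Your details are if anything more careful than the paper's. You use Fact~\ref{fact:egp-1S} to see that every inequality is tight at the apex, where the paper only asserts that the minimal face is cut out by turning the defining inequalities into equalities. You spread $v(C)$ over the bubble $C$, whereas the paper's vector $y_i = v(C)$ for all $i\in C$ gives $y_C = |C|\,v(C)$ and is wrong in general for non-singleton bubbles. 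Finally, reading modularity as equality only when $z(A),z(B)<\infty$ is exactly what the paper's proof tacitly checks; the literal version in $\hRR$ already fails for $\low_P$ with $P$ a two-element chain.
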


\begin{proof} $\egperm(z)$ is defined by $x_S \leq z(S)$ for the down-sets
  of $\pre(z)$. 
  Assume $\egperm(z)$ is an affine cone. There is then a minimal face, defined by
  making equalities of the defining inequalities. So
  for $x$ in this face we have $x_S = z(S)$. It then follows that
  \begin{equation} \label{eq:egp-zST}
    z(S \cup T) + z(S \cap T)  = x_{S \cup T} + x_{S \cap T} =
    x_S + x_T = z(S) + z(T). \end{equation}

  Assume $z$ is modular, defined by the function $\nu : {\mathbf b}(P)
  \pil \RR$ in Proposition \ref{pro:submod-mod}.
  The system of equations $x_{\downarrow C} = z(\downarrow C)$ has a solution
  since it is in triangular form.
  It is then straightforward to see by \eqref{eq:egp-zST}
  that $x_S = z(S)$ for each $S$ with $z(S) < \infty$.
  So this set of equalities has a solution, so $\egperm(z)$
  has a unique minimal face, and so is an affine cone.

  There is natural map $I \pil {\mathbf b}(P)$ and so $v : {\mathbf b}(P)
  \pil \RR$
  induces a map $y : I \pil \RR$,
  a point in $\RR^I$. Then $\egperm(z)$ is the translation of
  $\egperm(\low_P)$ by this point.
  \end{proof}

  \begin{definition} For a submodular function $z$,
    let the face $F$ of $\egperm(z)$ have normal cone $k(P)$.
  Define the submodular function $z^P$ by:
  \[ z^P(S):= \begin{cases} z(S), & S \text{ a down-set of } P, \\
                \infty, & \text{ if not.}
              \end{cases}
            \]
          \end{definition}

            
\begin{theorem} \label{thm:egp-cone} Let $z$ be submodular and 
$F$ a face of $\egperm(z)$ with $k(P)$ as its normal cone.

 \begin{itemize}
  \item[a.] $z^P$ is a modular function, and $\pre(z^P) = P$,
  \item[b.] The tangent cone $\cone_F(\egperm(z))$ is the extended generalized permutahedron
    $\egperm(z^P)$. 
 \end{itemize}
\end{theorem}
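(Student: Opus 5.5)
Both parts rest on the observation that a point $x\in F$ realizes all the values of $z$ on down-sets of $P$ simultaneously. By Proposition \ref{pro:egp-normalcone}, the sets $S$ with $x_S=z(S)$ for all $x\in F$ are exactly the down-sets of $P$.

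For part a, I fix a point $x\in F$. For down-sets $S,T$ of $P$, the sets $S\cup T$ and $S\cap T$ are again down-sets, so
\[ z^P(S)+z^P(T)=x_S+x_T=x_{S\cup T}+x_{S\cap T}=z^P(S\cup T)+z^P(S\cap T). \]
If $S$ or $T$ is not a down-set, the left side is $\infty$ and the equality holds with the convention on $\infty$. I then check that $z^P$ is an extended Boolean function: $z^P(\emptyset)=0$, and $z^P(I)=z(I)<\infty$ because $I$ is a down-set. The sets where $z^P$ is finite are exactly the down-sets of $P$, since $z(S)<\infty$ for every down-set $S$ of $P$ ($1_S$ is a bounded direction). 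By the Alexandroff correspondence this gives $\pre(z^P)=P$. One point to note is that the paper's definition of modular is stated as \eqref{eq:mod-def} for all $A,B$. I will argue this is meant with the $\infty$ convention, as in the submodular case, and remark that $z^P$ is then exactly the modular function attached by Proposition \ref{pro:submod-mod} to $P$ and the values of $x$ on bubbles.

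For part b, I use the Proposition on tangent cones in Section \ref{sec:polyh}. Present $\egperm(z)$ by the inequalities $x_S\le z(S)$ over $S\in\top(z)$, together with $x_I\le z(I)$ and $-x_I\le -z(I)$. The active set for $F$ consists of the $S$ with $x_S=z(S)$ on all of $F$, which by Proposition \ref{pro:egp-normalcone} are exactly the down-sets of $P$; both $I$-inequalities are active as well. Hence $\cone_F(\egperm(z))$ is cut out by $x_I=z(I)$ and $x_S\le z(S)$ for $S$ a down-set of $P$. By definition \eqref{eq:egp-def}, and since $z^P(S)<\infty$ exactly for these $S$, this is precisely $\egperm(z^P)$. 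As a consistency check, Proposition \ref{pro:mod-desc} with part a then says $\egperm(z^P)$ is an affine cone, as a tangent cone must be.

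The step needing most care is identifying the active set exactly, that is, showing no non-down-set $S$ with $z(S)<\infty$ is tight on all of $F$. This is the direction c)$\Rightarrow$b) of Proposition \ref{pro:egp-normalcone}, so I will invoke it directly rather than reprove it. Beyond that, I only need to match the inequality presentation of $\egperm(z)$ with the format required by the tangent-cone Proposition, treating the equation $x_I=z(I)$ as a pair of opposite inequalities.
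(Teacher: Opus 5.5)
Your proposal is correct. Part b is the paper's argument: Proposition \ref{pro:egp-normalcone} identifies the inequalities tight on all of $F$ with the down-sets of $P$, and the tangent cone is cut out by exactly those. You cite the tangent-cone Proposition of Section \ref{sec:polyh}, which is the relevant result here; the paper's proof points to Lemma \ref{lem:polyh-conea} at this step. You also keep track of the equation $x_I=z(I)$ more carefully than the paper does.

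Part a is where you take a different route. The paper proves b first. It then gets modularity of $z^P$ from Proposition \ref{pro:mod-desc}, because $\egperm(z^P)=\cone_F(\egperm(z))$ is an affine cone. You instead verify modularity directly from a single point $x\in F$, which realizes $z$ on every down-set of $P$ at once. This is the same computation as \eqref{eq:egp-zST} in the proof of Proposition \ref{pro:mod-desc}, but applied to a point of $F$ rather than to a point of the minimal face of the cone.

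Your route makes part a independent of part b. It also checks explicitly that $z^P$ is an extended submodular function with $\pre(z^P)=P$, which the paper leaves implicit. The paper's route is shorter and explains modularity geometrically, through the fact that tangent cones are affine cones.

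One wording fix. When $S$ or $T$ is not a down-set, no convention on $\infty$ makes the equality true. Take $I=\{a,b\}$ with $a\leq_P b$ but not $b\leq_P a$, and $S=\{b\}$, $T=\{a\}$. Then the left side is $\infty$ and the right side is $z^P(I)+z^P(\emptyset)=z(I)$.

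What you should say is that in this case only the submodular inequality is needed, and it holds trivially. ``Modular'' in the paper must be read as submodular with equality whenever $z(S),z(T)<\infty$; the literal reading for all $A,B$ would contradict Proposition \ref{pro:submod-mod}. That is exactly what your computation proves.
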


\begin{proof} For part b, in addition to the inequalities defining $\egperm(z)$,
  the face $F$ is defined by  $x_A = z(A)$ for down-sets $A$ of $P$,
  by Proposition \ref{pro:egp-normalcone}.
  By Lemma \ref{lem:polyh-conea}, $\cone_F(\egperm(z))$ is then defined by the
  inequalities $x_A \leq z(A)$ for down-sets $A$ of $P$.
  This is precisely the cone $\egperm(z^P)$.
  Part a. follows now by Proposition \ref{pro:mod-desc} b.
\end{proof}


\renewcommand{\bca}{
  \begin{tikzpicture}[scale=0.7, vertices/.style={draw, fill=black, circle, inner sep=1.5pt}]

\node [vertices] (31) at (-0.5,0) {};
\node [vertices] (32) at (-0.5,0.5) {};
\node [vertices] (33) at (-0.5,1) {};
\node (3p) at (-1, 0.5) {};

\node[anchor = east] at (31) {$b$ \hskip 4mm {}};
\node[anchor = east] at (32) {$c$ \hskip 4mm {}};
\node[anchor = east] at (33) {$a$ \hskip 4mm {}};
\draw (31)--(32);
\draw (31)--(33);
\node[anchor = east] at (3p) {$P\!:  \, $ \hskip 4mm {}};

\end{tikzpicture}
}

\newcommand{\pbcsa}{
  \begin{tikzpicture}[scale=0.9, vertices/.style={draw, fill=black, circle, inner sep=1.5pt}]

\node [vertices, gray] (31) at (-0.33,-0.2) {};
\node [vertices, gray] (32) at (-0.67,-0.2) {};
\node [vertices] (33) at (-0.5,0.7) {};
\node (3pp) at (-1, 0.3) {};

\node[anchor = west] at (31) {$\, c$ };
\node[anchor = east] at (32) {$b \,$ };
\node[anchor = east] at (33) {$a$ \hskip 0.1pt {}};

\node (ss3) at (-0.5, -0.2) {};
\draw (ss3) ellipse (0.58 and 0.42);
\node[anchor = east] at (3p) {$P\!: \, \, $ \hskip 4mm {}};


\draw (33)--(-0.57, 0.20);

\end{tikzpicture}
}

\renewcommand{\bcsa}{
  \begin{tikzpicture}[scale=0.9, vertices/.style={draw, fill=black, circle, inner sep=1.5pt}]

\node [vertices, gray] (31) at (-0.33,-0.2) {};
\node [vertices, gray] (32) at (-0.67,-0.2) {};
\node [vertices] (33) at (-0.5,0.7) {};

\node[anchor = west] at (31) {$\, c$ };
\node[anchor = east] at (32) {$b \,$ };
\node[anchor = east] at (33) {$a$ \hskip 0.1pt {}};

\node (ss3) at (-0.5, -0.2) {};
\draw (ss3) ellipse (0.58 and 0.42);


\draw (33)--(-0.57, 0.20);

\end{tikzpicture}
}

\renewcommand{\bsac}{
  \begin{tikzpicture}[scale=0.9, vertices/.style={draw, fill=black, circle, inner sep=1.5pt}]

\node [vertices] (31) at (-0.5,-0.2) {};
\node [vertices, gray] (32) at (-0.33,0.7) {};
\node [vertices, gray] (33) at (-0.67,0.7) {};

\node[anchor = east] at (31) {$b$ \hskip 0.1pt {}};
\node[anchor = west] at (32) {$\, c${}};
\node[anchor = east] at (33) {$a\, $};

\node (ss3) at (-0.5, 0.7) {};
\draw (ss3) ellipse (0.58 and 0.42);

\draw (31)--(-0.57, 0.27);

\end{tikzpicture}
}

\renewcommand{\abcss}{
  \begin{tikzpicture}[scale=0.9, vertices/.style={draw, fill=black, circle, inner sep=1.5pt}]

\node [vertices, gray] (31) at (-0.33,-0.2) {};
\node [vertices, gray] (32) at (-0.67,-0.2) {};
\node [vertices, gray] (33) at (-0.5, 0.03) {};

\node[anchor = west] at (31) {$\, c$ };
\node[anchor = east] at (32) {$b \,$ };
\node[anchor = south east] at (33) {$a$};

\node (ss3) at (-0.5, -0.13) {};
\draw (ss3) ellipse (0.55 and 0.55);


\end{tikzpicture}
}

      
\begin{figure}
\begin{center}
\begin{tikzpicture}[inner sep=1pt, scale=0.6]

\coordinate (a) at (0,0) ; 
\coordinate (b) at (3, 1.71);  
\coordinate (c) at (3, 5.15);  
\coordinate (d) at (0, 6.86);  
\coordinate (e) at (-3, 5.15);  
\coordinate (f) at (-3, 1.71);  

\coordinate (x) at (-1.5,0.855) ; 
\coordinate (y) at (1.5, 0.855);  
\coordinate (z) at (3, 3.43);  
\coordinate (t) at (1.5, 6.005);  
\coordinate (u) at (-1.5, 6.005);  
\coordinate (v) at (-3, 3.43);  

\coordinate (s) at (2, 4.50);

\coordinate (fo) at (-3, 7.2);
\coordinate (fh) at (3, -1.71);
\coordinate (fu1) at (7.4, 2.565);
\coordinate (fu2) at (0.74, 9.8);

\draw[white, fill=tol1, fill opacity = 0.1] 
            (fh) -- (fu1)  -- (fu2) -- (fo)--(f);

            \draw[black, dotted, fill=tol1, fill opacity = 0.09] 
            (a) -- (b)  -- (c) -- (d) -- (e) -- (f) -- (a);
            \draw[black, fill] (a)     circle (2pt);
            \draw[black, fill] (b)     circle (2pt);
            \draw[black, fill] (c)     circle (2pt);
            \draw[black, fill] (d)     circle (2pt);
            \draw[black, fill] (e)     circle (2pt);
            \draw[black, fill] (f)     circle (2pt);

\draw[black, very thick]
(f) -- (fo);

\draw[black, very thick]
(f) -- (fh);

\node[] at (f) [anchor = east] {$\bca\, \,\,  $ \hskip 10mm {}};
      
\node[] at (a) [anchor = north east] {$\bcsa$};
\node[] at (e) [anchor = east] {$\bsac$};

\node[] at (s) {$\abcss$};
\end{tikzpicture}
          \end{center}
          \caption{The cone at a vertex of the permutahedron, and
            the preorders of the four faces of this cone}
        \label{fig:cone-vertex-toperm}
      \end{figure}

\ignore{
\begin{figure}
\begin{center}
\begin{tikzpicture}[inner sep=1pt, scale=0.8]

\coordinate (a) at (0,0) ; 
\coordinate (b) at (3, 1.71);  
\coordinate (c) at (3, 5.15);  
\coordinate (d) at (0, 6.86);  
\coordinate (e) at (-3, 5.15);  
\coordinate (f) at (-3, 1.71);  

\coordinate (x) at (-1.5,0.855) ; 
\coordinate (y) at (1.5, 0.855);  
\coordinate (z) at (3, 3.43);  
\coordinate (t) at (1.5, 6.005);  
\coordinate (u) at (-1.5, 6.005);  
\coordinate (v) at (-3, 3.43);  

\coordinate (s) at (2, 4.50);

\coordinate (fo) at (-6, 3.42);
\coordinate (fh) at (3, -1.71);
\coordinate (fu1) at (7.4, 2.565);
\coordinate (fu2) at (0.74, 9.8);

\draw[white, fill=tol1, fill opacity = 0.1] 
            (fh) -- (fu1)  -- (fu2) -- (fo)--(f);

            \draw[black, dotted, fill=tol1, fill opacity = 0.09] 
            (a) -- (b)  -- (c) -- (d) -- (e) -- (f) -- (a);
            \draw[black, fill] (a)     circle (2pt);
            \draw[black, fill] (b)     circle (2pt);
            \draw[black, fill] (c)     circle (2pt);
            \draw[black, fill] (d)     circle (2pt);
            \draw[black, fill] (e)     circle (2pt);
            \draw[black, fill] (f)     circle (2pt);

\draw[black, very thick]
(f) -- (fo);

\draw[black, very thick]
(f) -- (fh);

      
\node[] at (x) [anchor = north east] {$\pbcsa$};

\node[] at (s) {$\abcss$};
\end{tikzpicture}
          \end{center}
          \caption{The cone $\Pi(z^P)$ for $P$,
            and the conforming preorders of the two faces of this cone}
        \label{fig:cone-edge-toperm}
      \end{figure}
}

By Proposition \ref{pro:egp-dim}, $k(P)$ and $\egperm(\low_P)$ are dual cones.
  The cone $\egperm(\low_P)$ is described with generators in
  \cite[Thm. 3.38]{Fuji}. Section 3.4 of \cite{AA2023}
  considers it in detail and calls it a preposet cone.
  We give in Proposition \ref{pro:egp-dualcone} below an efficient argument for
Proposition 3.4.1 and Lemma 3.4.3 in 
  \cite{AA2023}.

  Let $F$ be a face of $\egperm(\low_P)$. Its {\it normal cone} is $k(Q)$
  for some $P \btl Q$ by \eqref{eq:braid-galois},
  i.e. a contraction $Q$ of $P$.
But as $F$ is a cone, it has also a {\it dual cone}:

\begin{proposition} \label{pro:egp-dualcone}
  The dual cone of $F$ is $k(R)$ where $R \wtl P$ corresponds
  to $P \btl Q$ via the bijection \eqref{eq:braid-galois}.
  Thus $F = \egperm(\low_R)$.
  
  The generators of the cone $F$ is the $e_j - e_i$ for
  $i \leq_R j$.
\end{proposition}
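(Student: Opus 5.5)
The plan is to work entirely inside the cone picture of Section \ref{sec:cones}, applied to the cone $C = \egperm(\low_P)$. Its dual cone is $C^\vee = k(P)$ by Proposition \ref{pro:egp-dim}a. Let $F$ be a face of $C$ with normal cone $k(Q)$, where $P \btl Q$. For a cone, the normal cone of $F$ is the set of bounded directions attaining their maximum $0$ on $F$, namely $C^\vee \cap F^\perp$. This is the face of $C^\vee = k(P)$ dual to $F$.

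Under duality, faces of $C$ correspond inclusion-reversingly to faces of $C^\vee$. By the dual Galois connection \eqref{eq:cones-galoisdual} together with Lemma \ref{lem:cones-face}, the face $k(Q) = C^\vee \cap (-E')$ of $C^\vee$ corresponds to the coface $C^\vee + (-k(Q)) = \cone_{k(Q)}(k(P))$ of $C^\vee$. I would then prove directly the identity
\[ F^\vee = k(P) + (-k(Q)). \]
The inclusion $\supseteq$ holds because $k(P) \subseteq C^\vee \subseteq F^\vee$, and because elements of $k(Q)$ vanish on $F$, so their negatives are also $\le 0$ on $F$. For $\subseteq$, I would argue that $F^\vee$ is generated by $C^\vee$ together with $F^\perp$. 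This is the standard fact that $(C \cap H)^\vee = C^\vee + H^\perp$ for a supporting hyperplane, using $(C\cap D)^\vee = C^\vee + D^\vee$ from Subsection \ref{subsec:duals}. Here $F^\perp$ is spanned, modulo $C^\vee$, by $-w$ for $w \in k(Q)$: writing $F = C \cap (-k(Q))^\vee$ gives $F^\vee = C^\vee + (-k(Q))$ directly from the intersection formula. This identification is the step I expect to be the main obstacle. The subtle point is choosing the right description of $F$ as an intersection so that the duality formula applies cleanly, namely $F = C \cap \{x : w(x) \ge 0 \ \forall w \in k(Q)\}$.

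Next, Lemma (b) of Subsection \ref{subsec:braidcone} gives $k(P) + (-k(Q)) = k(P \wedge Q^{\op})$, and $R = P \wedge Q^{\op}$ is exactly the image of $Q$ under $\Phi$ in \eqref{eq:braid-galois}. Hence $F^\vee = k(R)$ with $R \wtl P$. Dualizing again, $F = k(R)^\vee$. By Proposition \ref{pro:egp-dim}a applied to $R$, $k(R)^\vee = \egperm(\low_R)$, so $F = \egperm(\low_R)$.

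For the generators, $\egperm(\low_R)$ is the dual of $k(R)$, and $k(R)$ is cut out by the inequalities $y_j - y_i \le 0$ for $i \le_R j$, i.e. $(e_j - e_i)(y) \le 0$. By the statement in Subsection \ref{subsec:duals} that the dual of $\{w_i \le 0\}$ is generated by the $w_i$ (with roles of $V$ and $W$ swapped), the cone $k(R)^\vee = F$ is generated by the vectors $e_j - e_i$ for $i \le_R j$. I would also check the orientation convention: $k(R)$ consists of $y$ with $y_i \ge y_j$ when $i \le_R j$, which confirms that the sign is $e_j - e_i$.
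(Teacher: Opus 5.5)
Your proof is correct and follows the paper's route. Both proofs obtain $F^\vee = k(P) + (-k(Q))$ by dualizing a description $F = C \cap (-E)$ of the face with $E^\vee = k(Q)$, where $C = \egperm(\low_P)$; both then apply part b of the lemma in Subsection \ref{subsec:braidcone} to get $k(R)$ with $R = P \wedge Q^{\op}$, and read off $F = \egperm(\low_R)$ and its generators by biduality. The only difference is that the paper obtains $E$ as the tangent cone $\egperm(\low_Q)$ via Theorem \ref{thm:egp-cone}b and the face/coface Galois correspondence, whereas you take $E = k(Q)^\vee$ directly from the normal cone, which avoids Theorem \ref{thm:egp-cone}; the equality $F = C \cap (-k(Q))^\vee$ that you flag but do not prove follows by writing $F = C_{w_0}$ for a bounded direction $w_0$, which then lies in $k(Q)$.
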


\begin{proof}
  We apply Theorem \ref{thm:egp-cone}b. above to $z = \low_P$ and
  the face $F$ with normal cone $k(Q)$, so $z^Q = \egperm(\low_Q)$.
  Then
  \[ \egperm(\low_P) - F = \cone_F(\egperm(\low_P)) = \egperm(\low_Q). \]
  We then have a diagram
\[ \xymatrix{
   k(Q) \ar[d]_{(-)^\vee} \ar[r]^-{\text{Galois}}
   & k(P) + (- k(Q)) \ar[d]^{(-)^\vee}\\
   \egperm(\low_Q) \ar[r]^-{\text{Galois}} & F, }
 \]
 where the horizontal arrows are the Galois connections for the
 cones $k(P)$ and $\egperm(\low_P)$ respectively, and the
vertical arrows are dual cones.
Due to the commutativity of the
diagram, we see that the dual cone of $F$ is           
is $k(P) + (-k(Q)) = k(P \wedge Q^{\op}) = k(R)$.
By Subsection \ref{subsec:duals},
the cone $F$ is then generated by the rays $e_j - e_i$ where $i \leq_R j$.
\end{proof}

\section{Cointeractions for extended generalized
  permutahedra}

\label{sec:cointer}

We relate the two bimonoids in species defined on
extended generalized permutahedra.
We show they are in cointeraction, but not in the sense
of \cite{Man2018},
the standard way for combinatorial Hopf algebras.
Rather, the cointeraction can be described as a measuring by a monoid,
as given in Appendix \ref{sec:meas-species}.

\medskip
Let $\egp(I)$ be the set of extended
generalized permutahedra in $\RR I$. This gives a species
\begin{equation} \label{eq:cointer-set} \egp : \setx \pil \set.
\end{equation}

\subsection{The bimonoids $\egp$ and $\smod$}
We want to consider comonoids in species. The natural thing is rather
to consider the above \eqref{eq:cointer-set} as a species:
\[ \egp : \setx \pil \setn. \]
This becomes a monoid in species (see Subsection \ref{subsec:species-setn}.
by the product of EGP's $\Pi_1 \times \Pi_2$.
By \cite[Section 1.4]{AA2023} this is also a comonoid in species with coproduct
\begin{equation*}
  \Delta : \egp(I) \pil
  \coprod_{{I = S \sqcup T}}
  \egp(S) \times \egp(T), \qquad
  \Pi \mapsto \sum_{1_S \text{ bounded on } \Pi} (\Pi_{|S}, \Pi_{/S}),
\end{equation*}

and counit

\begin{equation*} \epsilon : \egp \pil \benc, \quad \Pi \mapsto
\begin{cases} * \mapsto 1
     & I = \emptyset, \, \Pi = (0) \\
  * \mapsto 0, & I \neq \emptyset.
                 \end{cases}
\end{equation*}

Note that using the category $\setn$, introduced in the context
of bimonoids in species in \cite{F-CHA},
makes this a categorical comonoid. In contrast, the coproduct in
\cite[Section 1.1.2]{AA2023} for species in $\set$ must be ad hoc'ly defined.
(Comonoids in the categorical sense in the category of species in $\set$
do not exist, except in extremely special cases.)
With the product $\mu$ and the coproduct $\Delta$, this makes
$\egp$ into a bimonoid in species.

\medskip
By the correspondence between extended generalized permutahedra
and submodular functions, let
\[ \smod : \setx \pil \setn \]
be the species with $\smod(I)$ the submodular functions on $I$.
As $\smod(I)$ and $\egp(I)$ are in bijection, this makes $\smod$
into a bimonoid in species with product
\begin{align*}
  \mu: \smod(S) \times \smod(T) & \pil \smod(I)  \\
  (u,v) & \mapsto u \cdot v,
\end{align*}
and coproduct
\begin{equation*}
  \Delta : \smod(I)  \pil
  \coprod_{I = S \sqcup T}
    \smod(S) \times \smod(T), \qquad
  z  \mapsto \sum_{z(S) < \infty} (z_{|S}, z_{/S}).
\end{equation*}

\subsection{The subspecies of cones and modular functions}

Let $\egca : \setx \pil \setn$ be the sub-species of $\egp$ of
extended generalized permutahedra $\Gamma$  which are affine cones.
These are $\Gamma = \egperm(z)$ where $z$ is a modular function.

The coproduct $\delta_{\sfc}$ of \eqref{eq:cone-delta} now restricts to give
a comonoid for the Hadamard product:
\begin{align*}  \delta_{\egc} : \egca(I) & \pil \egca(I) \times \egca(I) \\
  \Gamma  & \mapsto \sum_{F \text{ face of } \Gamma} (F,\cone_F(\Gamma)).
\end{align*}
With the product $\mu$ and coproducts $\Delta$ and $\delta_{\egc}$,
this is a twisted double bialgebra in the language of \cite{Fo2019}.

Let $\mod$ be the subspecies of $\smod$, of modular functions.
By Theorems \ref{thm:egp-face} and \ref{thm:egp-cone}, the
comonoid on $\egca$ then corresponds to the coproduct
\begin{align*} \delta_{\mod} : \mod(I) & \pil \mod(I) \times \mod(I) \\
  z & \mapsto (z_P, z^P).
\end{align*}

\subsection{The comodule structure on $\egp$}
\label{subsec:EgpCom}
The comodule map $\delta_{\sfp}$ of \eqref{eq:poly-delta} gives a comodule
structure on $\egp$:
\begin{align*} \delta_{\egp} : \egp & \pil \egp \times \egca \\
  \Pi & \mapsto \sum_{F \text{ face of } \Pi} (F, \cone_F(\Pi)).
\end{align*}

On submodular functions, by Theorems \ref{thm:egp-face} and
\ref{thm:egp-cone} this is:
\begin{align*} \delta_{\smod} : \smod & \pil \smod \times \mod \\
  z & \mapsto (z_P, z^P).
\end{align*}

All the monoid and comonoid structures are now connected by the
following.

\begin{theorem}
  The comodule map $\delta_{\egp}$ is a measuring between the
  bimonoids $(\egp, \mu, \Delta)$ and $(\egca, \mu, \Delta)$
  by the monoid $(\egp, \mu)$.
\end{theorem}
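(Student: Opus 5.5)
The proof amounts to checking the axioms of a measuring by a monoid, as formulated in Appendix \ref{sec:meas-species}, for the map $\delta_{\egp} : \egp \pil \egp \times \egca$. There are three groups of axioms. First, $\delta_{\egp}$ must be compatible with the products $\mu$ on the source bimonoid $\egp$, the measuring monoid $\egp$ and the target bimonoid $\egca$. Second, it must be compatible with the units and counits. Third, it must intertwine the coproducts $\Delta$ on $\egp$ and $\egca$, using the product of the measuring monoid. This last condition is the essential one. Explicitly, I will show that for $\Pi \in \egp(I)$ the two sums
\[ \sum_{F \text{ face of } \Pi} \ \sum_{1_S \text{ bounded on } \cone_F(\Pi)} \bigl(F,\ \cone_F(\Pi)_{|S},\ \cone_F(\Pi)_{/S}\bigr) \]
and
\[ \sum_{1_S \text{ bounded on } \Pi} \ \sum_{F_1,\,F_2} \bigl(F_1 \times F_2,\ \cone_{F_1}(\Pi_{|S}),\ \cone_{F_2}(\Pi_{/S})\bigr) \]
coincide. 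In the second sum, $F_1$ runs over the faces of $\Pi_{|S}$ and $F_2$ over the faces of $\Pi_{/S}$.

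Multiplicativity is immediate. The faces of $\Pi_1 \times \Pi_2$ are exactly the products $F_1 \times F_2$ of faces, and $\cone_{F_1\times F_2}(\Pi_1\times\Pi_2) = \cone_{F_1}(\Pi_1)\times \cone_{F_2}(\Pi_2)$, as noted in Section \ref{sec:polyh}. Hence $\delta_{\egp}(\Pi_1\times\Pi_2)$ is the componentwise product of $\delta_{\egp}(\Pi_1)$ and $\delta_{\egp}(\Pi_2)$. For the unit and counit, $\egp(\emptyset)$ is the single point, and $\delta_{\egp}$ sends it to (point, point). The counit $\epsilon$ of $\egca$ is nonzero only on $I=\emptyset$, so both sides of the counit axiom reduce to $\epsilon$ of $\egp$.

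For the coproduct identity I construct a bijection between the two index sets. Fix a face $F$ of $\Pi$ with normal cone $k(P)$. By Corollary \ref{cor:poly-w}, $1_S$ is bounded on $\cone_F(\Pi)$ if and only if $1_S$ is bounded on $\Pi$ and $F \sus \Pi_{1_S}$. By Proposition \ref{pro:egp-normalcone}, this happens if and only if $S$ is a down-set of $P$. In that case $\Pi_{1_S} = \Pi_{|S}\times\Pi_{/S}$ by Fact \ref{fact:egp-1S}, and the Proposition preceding Lemma \ref{lem:egp-PAB} gives $F = F_S \times F_{/S}$. So we send the pair $(F,S)$ to the triple $(S, F_S, F_{/S})$. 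Conversely, given $S$ with $1_S$ bounded on $\Pi$ and faces $F_1$, $F_2$ of the two factors, the product $F_1\times F_2$ is a face of the face $\Pi_{1_S}$. It is therefore a face of $\Pi$ contained in $\Pi_{1_S}$, which gives the inverse map.

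It remains to match the tangent-cone components, and this is where I expect the main care to be needed. Apply Fact \ref{fact:egp-1S} to the EGP $\cone_F(\Pi)$, which is an EGP by Theorem \ref{thm:egp-cone}: the face of $\cone_F(\Pi)$ in direction $1_S$ is $\cone_F(\Pi)_{|S}\times \cone_F(\Pi)_{/S}$. By Corollary \ref{cor:poly-w} this same face equals
\[ \cone_F(\Pi_{1_S}) = \cone_{F_S\times F_{/S}}(\Pi_{|S}\times \Pi_{/S}) = \cone_{F_S}(\Pi_{|S})\times \cone_{F_{/S}}(\Pi_{/S}). \]
Comparing the factors in $\RR S$ and $\RR (I\backslash S)$ gives the required equalities. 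As a cross-check I would verify the same identity on submodular functions via Theorem \ref{thm:egp-cone}. There it reads $(z^P)_{|S} = (z_{|S})^{P_S}$ and $(z^P)_{/S} = (z_{/S})^{P_{/S}}$ for a down-set $S$ of $P$, which follows directly from the definition of $z^P$, since down-sets of $P_S$ and $P_{/S}$ correspond to down-sets of $P$ contained in $S$ or containing $S$.
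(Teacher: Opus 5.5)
Your proposal is correct and follows the paper's proof. Multiplicativity comes from $\cone_{F_1\times F_2}(\Pi_1\times\Pi_2)=\cone_{F_1}(\Pi_1)\times\cone_{F_2}(\Pi_2)$, and the coproduct square comes from Corollary \ref{cor:poly-w} with $w=1_S$ together with the product decomposition of the face in direction $1_S$ (Fact \ref{fact:egp-1S}) for both $\Pi$ and $\cone_F(\Pi)$. You are more explicit than the paper in matching the index sets and in checking the counit axiom, both of which the paper leaves implicit.
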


\begin{proof}
First, that $\delta_{\egp}$ is a morphism of monoids is clear, since
$\delta_{\sfp}$ is this.
Secondly, we must show that the following diagram commutes:
\begin{equation} \label{eq:cointer-diagram}
  \xymatrix
  {\egp \ar[r]^{\delta_{\egp}} \ar[d]_{\Delta_{\egp}} &
    \egp \overset{H}\times \egca \ar[d]^{1 \times \Delta_{\egc}} \\
    \egp \overset{C}\times \egp \ar[d]_{\delta \times \delta} &
    \egp \overset{H}\times (\egca \overset{C} \times \egca)  \\
    (\egp \overset{H} \times \egca) \overset{C}\times
    (\egp \overset{H}\times \egca) \ar[r]^{1 \times \sigma \times 1}
    & (\egp \overset{C}\times \egp) \overset{H}
    \times (\egca \overset{C}\times \egca), \ar[u]_{\mu \times 1 \times 1}}
\end{equation}
where $\sigma$ is the natural twist map.
At the end (middle right), on $\egp(I) \times \egca(S) \times \egca(T)$,
this becomes as follows:
Following the maps to the left and bottom, $\Pi$ maps to
$(\Pi_{|S}, \Pi_{/S})$, then to the sum
\begin{equation} \label{eq:cointer-left}
  \sum_{F_1, F_2} (F_1 \times F_2, \cone_{F_1}(\Pi_{|S}), \cone_{F_2}(\Pi_{/S})).
  \end{equation}

Following the maps to the top and right, $\Pi$ maps to the sum
$\sum_{F} (F, \cone_F(\Pi))$ and then to
\begin{equation} \label{eq:cointer-right}
  \sum_F (F, (\cone_F(\Pi))_{|S}, (\cone_F(\Pi))_{/S}),
  \end{equation}
  where we sum over faces $F$ of $\Pi$ where $1_S$ takes a maximum value.
From Corollary \ref{cor:poly-w} we know that for a polyhedron $P$, a
direction $w$ and $F  \sus P_w$, then
\begin{equation} \label{eq:cointer-Pw}  \cone_F(P_w) = (\cone_F(P))_w.
\end{equation}
Here $P = \Pi, w = 1_S$, and
\[ F \sus (\Pi)_{1_S} = \Pi_S \times \Pi_{/S}. \]
Thus $F = F_1 \times F_2$, a face of $\Pi_S \times \Pi_{/S}$.
The left side of \eqref{eq:cointer-Pw} is then:
\[ \cone_F((\Pi)_{1_S}) = \cone_{F_1 \times F_2}(\Pi_S \times \Pi_{/S})
  = \cone_{F_1}(\Pi_S) \times \cone_{F_2}(\Pi_{/S}). \]
The right side of \eqref{eq:cointer-Pw} is:
\[ (\cone_F(\Pi))_{1_S} = (\cone_F(\Pi))_{|S} \times (\cone_F(\Pi))_{/S}. \]
Thus \eqref{eq:cointer-left} and \eqref{eq:cointer-right} become equal,
giving the diagram \eqref{eq:cointer-diagram}.
\end{proof}

\begin{remark}
  Recently L.Foissy, \cite{Foissy-boole}, has considered cointeractions
  for boolean functions $\pow(I) \pil \hele$. For a certain subclass of these,
  called rigid functions (actually a somewhat more general class),
  he has established a cointeraction in the
  classical sense.
\end{remark}

\appendix

\section{Measuring coalgebras and algebras}

\label{sec:measure}

Then notion of measuring coalgebras is a standard notion with very
nice properties and amenable to great generality. For material
on this, the reader may consult \cite{Ba-Univ, Ba-Comodule, Hy-Measure,
North}.
The analog dual notion of measuring algebra seems hardly to have been
considered. However this is the notion which seems right in our setting,
and we introduce it here.

\subsection{Measuring coalgebras}
We consider algebras and coalgebras over a field $k$. All
tensor products are over $k$. For vector spaces $V,W$, denote
by $[V,W]$ the set of linear maps $V \pil W$. 

Let $C$ be a coalgebra and $A,B$ algebras, and $\alpha : C \te A \pil B$
a linear map. The linear maps $[C,B]$ become an algebra under the
convolution product. The coalgebra $C$ {\it measures} the algebras $A$ and $B$,
if the linear map $A \pil [C,B]$ is an algebra homomorphism.

This is equivalent to the following two diagrams commuting:
\[\xymatrixcolsep{3pc}\xymatrix{C \te A \te A \ar[r]^-{\Delta_C \te 1_{A^{\te 2}}}
\ar[dd]^{1_C \te \mu_A}  &
(C^{\te 2}) \te (A^{\te 2}) \ar[r]^{\iso} & (C \te A)^{\te 2}
\ar[d]^{\alpha \te \alpha} \\ 
& & B \te B \ar[d]^{\mu_B} \\
C \te A \ar[rr]^{\alpha} & & B,
} \]

and
\[ \xymatrix{ C \te k \ar[d]^-{1_C \te \eta_A}
    \ar[r]^{\epsilon_C \te 1_k} & k \ar[d]^{\eta_B} \\
    C \te A \ar[r]^{\alpha} & B.}
\]

The idea is that $C$ parametrizes algebra homomorphisms $A \pil B$.
For fixed $A$ and $B$, one gets a category of coalgebras measuring
$A$ and $B$.

Instead of algebras $A$ and $B$ one may consider {\it coalgebras} $E$ and $F$.
For a linear map $\gamma : C \te E \pil F$, there is
also the notion of $C$ {\it measuring} $E$ and $F$. This is simply
equivalent to $\gamma$ being a coalgebra morphism.

\subsection{Measuring algebras}

We now consider the ``dual'' notion. Let $D$ be an algebra
and $E,F$ coalgebras, and $\beta : E \pil D \te F$ a linear
map. We say the algebra $D$ {\it measures} the coalgebras
$E$ and $F$, if the following diagrams commute:

\begin{equation} \label{eq:measure-EDF}
\xymatrixcolsep{3pc}\xymatrix{E \ar[d]^{\Delta_E} \ar[rr]^{\beta}& &  
      D \te F \ar[dd]^{1_D \te \Delta_F} \\
  E \te E  \ar[d]^{\beta \te \beta} & & \\     
 (D \te F)^{\te 2} \ar[r]^{\iso} &  D^{\te 2} \te F^{\te 2}
 \ar[r]^-{\hskip 2mm \mu_D \te 1_{F^{\te2}} \hskip 2mm} & D \te F \te F}
\end{equation}
and
\begin{equation}  \label{eq:measure-EDFeta}
  \xymatrix{E \ar[d]^{\epsilon_E}
    \ar[r]^{\beta} & D \te F\ar[d]^{1_D\te \epsilon_F} \\
   k \ar[r]^-{\eta_D \te 1_k} & D \te k.}
\end{equation}
The idea is that $D$ parametrizes coalgebra morphisms $E \pil F$.
For fixed $E$ and $F$ 
one gets a category of algebras measuring $E$ and $F$.

Again one may consider algebras $A,B$ (instead of coalgebras $E$ and $F$)
and linear maps $\delta : A \pil D \te B$.
The notion that $D$ measures $A,B$ is that $\delta$ is an
algebra morphism.

If $A$ and $B$ are bialgebras, we say the algebra $D$ {\it measures
  the bialgebras} $A$ and $B$ if it both measures $A,B$ as coalgebras
and as algebras.

\section{Cointeracting bialgebras} \label{sec:cointbi}

We see that the notion of comodule bialgebras \cite{Man2018}
can be seen as a measuring by an algebra.

\subsection{Comodule bialgebras}
Let $(B, \mu_B, \delta_B, \eta_B, \epsilon_B)$ and
$(A, \mu_A, \Delta_A, \eta_A, \epsilon_A)$ be bialgebras. Furthermore
we assume that $A$ is a left comodule over $B$, given by
$\delta : A \pil B \te A$.
Note that since $B$ is a bialgebra, $k$ and $A \te A$ are also a comodules
over $B$

We say $A$ is a {\it comodule bialgebra} over $B$ if all structural
maps of $A$: $\mu_A, \Delta_A, \eta_A, \epsilon_A$ are comodule maps.

\begin{proposition} \label{pro:cointer-measure}
Let $A$ and $B$ be a bialgebras and suppose 
$A$ is a left $B$-comodule for the map $\delta_A : A \pil B \te A$.
Then $A$ is a comodule bialgebra over the bialgebra $(B,\mu_B,\delta_B, \eta_B,
\epsilon_B)$
if and only if the algebra $(B,\mu_B,\eta_B)$ measures
the bialgebras $(A,\mu_A,\Delta_A)$ and $(A,\mu_A,\Delta_A)$.
\end{proposition}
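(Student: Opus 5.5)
We unwind both notions into the same list of commuting diagrams, one structure map at a time. Being a comodule bialgebra means four things: $\mu_A$ is a comodule map for the tensor comodule structure on $A\te A$; $\eta_A$ is a comodule map, with $k$ coacting via $1\mapsto \eta_B\te 1$; $\Delta_A$ is a comodule map; and $\epsilon_A$ is a comodule map. Measuring the bialgebras $A,A$ by the algebra $B$ means that $\delta_A$ is an algebra morphism $A\pil B\te A$, and that diagrams \eqref{eq:measure-EDF} and \eqref{eq:measure-EDFeta} commute with $E=F=A$, $D=B$, $\beta=\delta_A$.

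The first step is to write out the comodule structures involved. The coaction on $A\te A$ is
\[ a\te a' \mapsto \sum a_{(-1)}a'_{(-1)}\te a_{(0)}\te a'_{(0)}, \]
which uses $\mu_B$ and the twist. The coaction on $k$ is $1\mapsto \eta_B(1)\te 1$. These use only the algebra structure $(\mu_B,\eta_B)$ of $B$, and not $\delta_B$ or $\epsilon_B$.

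Next we match the maps pairwise.

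(i) The condition that $\mu_A$ is a comodule map reads $\delta_A(aa')=\sum a_{(-1)}a'_{(-1)}\te a_{(0)}a'_{(0)}$. This is exactly multiplicativity of $\delta_A$ into the tensor product algebra $B\te A$.

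(ii) The condition that $\eta_A$ is a comodule map reads $\delta_A(1_A)=1_B\te 1_A$, which is unitality of $\delta_A$. Together, (i) and (ii) say that $\delta_A$ is an algebra morphism, i.e.\ that $B$ measures $A,A$ as algebras.

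(iii) The condition that $\Delta_A$ is a comodule map reads $(1_B\te\Delta_A)\circ\delta_A=(\mu_B\te 1\te 1)\circ(\text{twist})\circ(\delta_A\te\delta_A)\circ\Delta_A$. In Sweedler notation both sides send $a$ to an element of $B\te A\te A$, and this equation is literally diagram \eqref{eq:measure-EDF}.

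(iv) The condition that $\epsilon_A$ is a comodule map reads $(1_B\te\epsilon_A)\circ\delta_A=\eta_B\circ\epsilon_A$ (identifying $B\te k\cong B$). This is diagram \eqref{eq:measure-EDFeta}.

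Each identification is an equivalence, so the two lists of conditions coincide and the proposition follows.

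The main obstacle is bookkeeping rather than mathematics. We need to check carefully that the tensor-product comodule structures on $A\te A$ and on $k$, which are defined using the bialgebra $B$, involve only $\mu_B$ and $\eta_B$; this is why only the algebra $(B,\mu_B,\eta_B)$ appears on the measuring side. We also need the order of factors after the twist $\iso$ in \eqref{eq:measure-EDF} to agree with the twist in the comodule structure on $A\te A$. The comodule axioms for $\delta_A$ itself, which do use $\delta_B$ and $\epsilon_B$, are a standing hypothesis on both sides and play no role in the equivalence.
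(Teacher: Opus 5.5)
Your proposal is correct and follows essentially the same route as the paper. Like the paper, you split the comodule-bialgebra conditions into two parts: the algebra part ($\mu_A$ and $\eta_A$ are comodule maps, which is equivalent to $\delta_A$ being an algebra morphism) and the coalgebra part ($\Delta_A$ and $\epsilon_A$ are comodule maps, which are literally diagrams \eqref{eq:measure-EDF} and \eqref{eq:measure-EDFeta} with $E,D,F$ replaced by $A,B,A$); you just spell these identifications out in Sweedler notation.
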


\begin{proof}
  That the algebra structure maps $\mu_A : A \te A \pil A$
  and $\eta_A : k \pil A$ are
  morphisms of $B$-comodules, is equivalent to $A \pil B \te A$ being
  a morphism of algebras.
  Thus $B$ measures the algebras $(A,\mu)$ and $(A,\mu)$.
  That $\Delta : A \pil A \te A$ is a comodule morphism is
  the commutativity of the diagram
 \[\xymatrixcolsep{3pc} \xymatrix{A \ar[d]^{\Delta_A} \ar[rr]^{\delta}& &  
      B \te A \ar[dd]^{1_B \te \Delta_A} \\
 A \te A  \ar[d]^{\delta \te \delta}  & & \\    
  (B \te A)^{\te 2} \ar[r]^{\iso} &  B^{\te 2} \te A^{\te 2}
 \ar[r]^-{\mu_B\te 1_{A^{\te 2}}} & B \te A \te A }
\]
and that $\epsilon_A : A \pil k$ is a co-module morphism the commutativity of:
\[ \xymatrix{A \ar[d]^{\epsilon_A}
    \ar[r]^{\delta} & B \te A \ar[d]^{1_B\te \epsilon_A} \\
   k \ar[r]^{\eta_B} & B \te k.}
\]
But these diagrams are the diagrams \eqref{eq:measure-EDF} and
\eqref{eq:measure-EDFeta} with $E,D,F$ replaced with $A,B,A$.
\end{proof}

\subsection{Double bialgebras}

We may specialize to $A = B$, the multiplications $\mu_A = \mu_B$
and the comodule map $\delta : B \pil B \te B$ being the same as the coproduct
$\delta_B$ of $B$. In this case, the pair of bialgebras
\[ (B, \mu_B, \Delta_B, \eta_B, \epsilon_\Delta), \quad (B, \mu_B, \delta_B,
  \eta_B,
  \epsilon_\delta), \]
is called a {\it double bialgebra}, \cite{Fo2022}.

\subsection{Extending double bialgebras in two ways}
\label{subsec:ExtendTwo}
Now start with a double bialgebra $(B, \mu_B, \Delta_B, \delta_B, \eta_B,
\epsilon_\Delta, \epsilon_\delta)$.
We consider a bialgebra $(A,\mu_A,\Delta_A, \eta_A, \epsilon_A)$ which
extends the bialgebra $(B,\mu_B,\Delta_B, \eta_B, \epsilon_\Delta)$,
i.e. $B \sus A$ and $\mu_A, \Delta_A$ and $\epsilon_A$ restrict to
$\mu_B, \Delta_B$ and $\epsilon_\Delta$.

There are two ways to extend the coproduct map
$\delta_B : B \pil B \te B$ to a comodule map.

\begin{itemize}
\item[1.] $A$ is a left $(B,\delta_B)$-comodule:
  $\delta : A \pil B \te A$.
  This is the case considered in Proposition \ref{pro:cointer-measure}.
  Assuming that the algebra $(B,\mu_B)$
  measures the bialgebras $(A,\mu_A, \Delta_A)$ and $(A,\mu_A, \Delta_A)$,
  is equivalent to $(A,\mu_A, \Delta_A) $
  being a comodule bialgebra over $(B,\mu_B,\delta_B)$.
\item[2.] Or, $A$ is a right $(B,\delta_B)$-comodule:
  $\delta : A \pil A \te B$.
  Our assumption will be that the algebra  $(A,\mu_A)$ (in $A \te B$)
  measures the bialgebras
  $(A,\mu_A, \Delta_A)$ and $(B,\mu_B, \Delta_B)$.
  This is the case we encounter in the present article.
\end{itemize}

\begin{remark}
  Note that in case 1, to make sense of that $B$ measures the coalgebras $A$
  and $A$,
  it is not necessary for $\delta_A$ to
  be a comodule map for the coalgebra $(B,\delta_B)$,
  nor for the bialgebra $(A, \mu_A, \Delta_A) $
  to be an extension of the bialgebra $(B, \mu_B, \Delta_B)$.

  The analog also holds for case 2. However in case 2, we should have
  a bialgebra $(B, \mu_B, \Delta_B)$, as we consider an algebra $(A,\mu_A)$
  which measures bialgebras. In case 1 the minimal requirement
  is that we have an algebra $(B,\mu_B)$.

\end{remark}


\section{Measuring of species}
\label{sec:meas-species}

The notions in Appendices \ref{sec:measure} and \ref{sec:cointbi}
may now be extended to species.
Let $D$ be a monoid in species and $E, F$ comonoids in species for
the Cauchy product. The monoid $D$ {\it measures} the comonoids $E$ and $F$,
if there is a morphism of species
$\beta: E \pil D \teh F$ such that the following diagram commutes:

\begin{equation*}
\xymatrixcolsep{3pc}\xymatrix{E \ar[d]^{\Delta_E} \ar[rr]^{\beta}& &  
      D \teh F \ar[dd]^{1_D \te \Delta_F} \\
  E \tec E  \ar[d]^{\beta \te \beta} & & \\     
 (D \teh F) \tec (D \teh F) \ar[r]^-{\iso} &  D^{\tec 2} \teh F^{\tec 2}
 \ar[r]^-{\mu_D \te 1_{F^{\te 2}}} & D \teh (F \tec F)}
\end{equation*}
and
\begin{equation*} 
  \xymatrix{E \ar[d]^{\epsilon_E}
    \ar[r]^{\beta} & D \teh F \ar[d]^{1_D\te \epsilon_F} \\
   \benh \ar[r]^{\eta_D \te 1} & D \teh \benh.}
\end{equation*}

If $E, F$ are bimonoids in species (for Cauchy-products) we have the 
notion of the monoid $D$ {\it measuring the bimonoids} $E$ and $F$.
We also have the notion for species of a comodule bimonoid $(A,\mu_A, \Delta)$
over a bimonoid $(B,\mu_B, \delta)$ in species, where $\mu_A, \mu_B$ and $\Delta_A$ are
given by the Cauchy product, and $\delta_B$ and the comodule map 
$\delta_A$ given by the Hadamard product. 
By specializing to $A = B$ we get the notion of double bimonoid
$(B,\mu,\Delta, \delta, \eta, \epsilon_\Delta, \epsilon_\delta)$.

\medskip
Starting from a double bimonoid $B$ in species,
we may have an extension bimonoid
$(A,\mu_A, \Delta_A)$ of $(B,\mu_B,\Delta_B)$.
We also assume $(A, \delta_A)$ is a comodule over the comonoid $(B,\delta_B)$
such that $\delta_A$ extends $\delta_B$. Lifting the settings
of Subsection \ref{subsec:ExtendTwo} to species, we could have:

\begin{itemize}
\item[1.]  $(A,\delta_A) $ is a {\it left} comodule
over $(B,\delta_B)$,
with $(B,\mu)$ measuring the bimonoids $(A,\mu,\Delta)$ 
and $(A,\mu, \Delta)$. This is equivalent to $(A,\mu,\Delta)$ being
a comodule bimonoid over $(B,\mu,\delta)$. 
\item[2.] Or, $(A,\delta_A)$ is a {\it right} comodule over $(B,\delta_B)$, with
$(A,\mu)$ measuring the bimonoids $(A,\mu,\Delta)$ and $(B,\mu,\Delta)$.
\end{itemize}
The latter is the setting we encounter in Subsection \ref{subsec:EgpCom}.

\bibliographystyle{amsplain}
\bibliography{biblio}

\end{document}